\numberwithin{equation}{section}%
\numberwithin{table}{section}%
\numberwithin{figure}{section}%
\title{Aspects of Coulomb gases}%
\author{Djalil \textsc{Chafaï}}%
\address{CEREMADE, Université Paris-Dauphine, Université PSL, CNRS, 75016 Paris, France}%
\urladdr{http://djalil.chafai.net/}%
\date{Summer 2021, compiled \today.}
\newtheorem{theorem}{Theorem}[section]%
\newtheorem{corollary}[theorem]{Corollary}%
\newtheorem{lemma}[theorem]{Lemma}%
\newtheorem{remark}[theorem]{Remark}%
\newcommand{\weak}{\underset{n\to\infty}{\overset{\mathcal{C}_b}{\longrightarrow}}}
\keywords{%
  Asymptotic analysis; %
  Exactly solvable model; %
  High dimensional phenomenon; %
  Mean-field particle system; %
  Random matrix; %
  Spectral analysis; %
  Determinantal point process; %
  Coulomb gas; %
  Potential theory; %
  Wigner jellium; %
  Langevin dynamics; %
  Large deviations principle; %
  Boltzmann\,--\,Gibbs measure%
.}%
\subjclass[2000]{
  Primary: 82B21;
  Secondary: 82D05%
  ; 81V45%
  ; 60F05%
  .%
}%
\begin{document}
\begin{abstract}
  Coulomb gases are special probability distributions, related to potential
  theory, that appear at many places in pure and applied mathematics and
  physics. In these short expository notes, we focus on some models, ideas,
  and structures. We present briefly selected mathematical aspects, mostly
  related to exact solvability and first and second order global asymptotics.
  A particular attention is devoted to two-dimensional exactly solvable models
  of random matrix theory such as the Ginibre model. Thematically, these notes
  lie between probability theory, mathematical analysis, and statistical
  physics, and aim to be very
  accessible. 
  They form a contribution to a volume of the \emph{Panoramas et Synthèses}
  series around the workshop \emph{États de la recherche en mécanique
    statistique}, organized by Société Mathématique de France, held at
  Institut Henri Poincaré, Paris, in the fall of 2018
  (\url{https://statmech2018.sciencesconf.org/}).
\end{abstract}
\maketitle
{\footnotesize\tableofcontents}
%


\medskip

There are several introductory texts around Coulomb gases. We refer for
instance to \cite{MR2168344,MR2514781,MR1677884,MR2760897,MR2920518,MR2641363}
for the relation to random matrices, to \cite{MR3309890} for the relation to
analysis and Ginzburg\,--\,Landau vortices, to \cite{MR3443870,phdavid} and
references therein for a relation to geometry, to \cite{butheze} and
references therein for a relation to random polynomials, to \cite{rougerie}
for a relation to Fock\,--\,Hartree quantum theory and Bose\,--\,Einstein
condensates, to \cite{MR3888701} and \cite{lewin-mega} for an overview from a
mathematical analysis/physics perspective, and to \cite{PhysRevA.99.021602}
for a statistical physics point of view.

   
\section*{Notation}

The Euclidean norm of $x=(x_1,\ldots,x_d)\in\mathbb{R}^d$ is
$|x|=\sqrt{x_1^2+\cdots+x_d^2}$. It is the modulus if $d=2$ with the
identification $\mathbb{C}=\mathbb{R}^2$. We set
$\mathrm{i}=(0,1)\in\mathbb{C}$. The real and imaginary parts of
$z\in\mathbb{C}$ are denoted $\Re z$ and $\Im z$. The Lebesgue measure is
denoted $\mathrm{d}x$. Let $(E,\tau)$ be a topological space with Borel
$\sigma$-field $\mathcal{B}(E)$. We denote by $\mathcal{C}_b(E,\mathbb{R})$
the set of bounded continuous functions $E\to\mathbb{R}$, and by
$\mathcal{M}_1(E)$ the set of probability measures on $(E,\mathcal{B}(E))$. If
$\mu_1,\mu_2,\ldots,\mu$ are in $\mathcal{M}_1(\mathbb{R}^d)$, then
$\lim_{n\to\infty}\mu_n=\mu$ weakly, denoted
\[
  \mu_n\weak\mu,
  \text{  when for all }f\in\mathcal{C}_b(E,\mathbb{R})\text{ we have  }%
  \lim_{n\to\infty}\int f\mathrm{d}\mu_n=\int f\mathrm{d}\mu.
\]
This defines a sequential topology on $\mathcal{M}_1(E)$, giving a Borel
$\sigma$-field $\mathcal{B}(\mathcal{M}_1(E))$. A random probability measure
on $E$ is a random variable taking values in $\mathcal{M}_1(E)$. By $X\sim\mu$
we mean that the random variable $X$ has law $\mu$. We denote by
$\overset{\mathrm{d}}{=}$ and $\overset{\mathrm{d}}{\longrightarrow}$ the
equality and the convergence in law respectively.

\section{Coulomb electrostatics and equilibrium measures}

The Coulomb kernel is identical to the Newton kernel. Mathematically,
potential theory deals with the analysis of the Laplacian and its Green
function and the behavior of harmonic functions. In some sense, it emerges
naturally from the gravitation theory of Johannes Kepler and Isaac Newton, as
well as from the modeling of electrostatics, namely the study of the
distribution of static electric charges on conductors and their interactions.
From this last point of view, it takes its historical roots in the works of
Charles-Augustin de Coulomb, Joseph-Louis Lagrange, and Carl Friedrich Gauss.
Some mathematical parts of potential theory were developed later on by \,--\,
among others \,--\, Johann Peter Gustav Lejeune Dirichlet, Victor Gustave
Robin, Henri Poincaré, David Hilbert, Charles-Jean de La Vallée Poussin
\cite{zbMATH03031018}, Marcel Brelot \cite{MR0259146},
Otto Frostman \cite{frostman}, Oliver Dimon Kellogg \cite{zbMATH03244821},
Gustave Choquet \cite{MR0080760}, and their followers. Deep links with Markov
processes and probability theory were explored in particular by Joseph Leo
Doob in \cite{MR1814344}, Gilbert Hunt, Claude Delacherie and Paul-André Meyer
\cite{dellacheriemeyer}, and their followers. Nowadays the basic objects and
structures of potential theory appear at many places in mathematics and
physics, providing in general a very useful electrostatic modeling or
interpretation.

\bigskip

\emph{Cet aspect probabiliste, que Brelot regrettait tant d’être arrivé trop
  tard pour maîtriser, est sans doute le plus bel exemple d’interaction
  féconde entre deux théories : La théorie du potentiel, née dans le ciel
  (Kepler 1618, Newton 1665) et la théorie des probabilités née d’un coup de
  dés (Pascal 1654), donc presque simultanément, devaient après trois siècles
  et des petits pas l’une vers l’autre (Wiener 1923, puis P. Levy, Doob),
  prendre avec G. Hunt (1957) pleinement conscience que leurs parties les plus
  vivaces ne sont que deux faces complémentaires d’un même bel objet, et qu’on
  ne peut bien comprendre l’une sans connaître l’autre (le traité
  Dellacherie-P.A. Meyer veut en donner la preuve).}
\begin{flushright}
  Gustave Choquet, \emph{La vie et l'oeuvre de {M}arcel {B}relot
    (1903\,--\,1987)} \cite{Choquet1990}.
\end{flushright}

\bigskip

\begin{table}
  \begin{tabular}{r|l}
    $\cdots$ & $\cdots$\\
    1660 & Newton, \ldots\\
    1770 & Coulomb, \ldots\\
    1800 & Gauss, \ldots\\
    1870 & Boltzmann, Gibbs, \ldots\\
    1900 & Thomson, de la Vallée Poussin, \ldots\\
    1920 & Fock, Hartree, \ldots\\
    1930 & Wigner, Wishart, \ldots\\
    1940 & Doob, Onsager, \ldots\\
    1950 & Choquet, Hunt, Wigner, \ldots\\
    1960 & Dyson, Ginibre, Mehta, Selberg, \ldots\\
    1970 & Kosterlitz, Landkof, Pastur, Thouless, \ldots\\
    1980 & Deift, Laughlin, Lebowitz, Saff, Voiculescu, \ldots\\
    1990 & Ben Arous, Edelman, Guionnet, Johansson, \ldots\\ 
    2000 & Forrester, Erdős, Lewin, Serfaty, Yau, \ldots\\
    $\cdots$ & $\cdots$
  \end{tabular}
  \caption{The arrow of time and some of the main actors mentioned in the text
    or in the references. As mentioned in \cite{MR3434251}, the \emph{Stigler
      law of eponymy} states that ``\emph{No scientific discovery is named
      after its original discoverer.}'', attributed by Stephen Stigler to
    Robert K.~Merton. This is also known as the \emph{Arnold principle} by
    some people.}
\end{table}

  \bigskip
  
Let $d\geq1$. The \emph{Coulomb kernel} $g$ in $\mathbb{R}^d$ is given by
$g(0)=+\infty$, and, for all $x\in\mathbb{R}^d\setminus\{0\}$,
\begin{equation}\label{eq:coulomb}
  g(x)=
  \begin{cases}
    \displaystyle\log\frac{1}{|x|} & \text{if $d=2$,}\\[1.5em]
    \displaystyle\frac{1}{(d-2)|x|^{d-2}} & \text{if not}.
  \end{cases}
\end{equation}
We say that $(x,y)\mapsto G(x,y)=g(x-y)$ is the Green function of the Laplace
operator $\Delta=\partial^2_1+\cdots+\partial^2_d$, and $g$ is the fundamental
solution of the Poisson equation, see for instance \cite[Theorem
6.20]{MR1817225}. Indeed, denoting $\delta_0$ the Dirac mass at the origin, we
have, in the sense of Schwartz distributions,
\begin{equation}\label{eq:laplace}
  -\Delta g=c_d\delta_0
  \quad\text{and}\quad
  c_d=d\omega_d=\frac{2\pi^{d/2}}{\Gamma(d/2)}
\end{equation}
where $\omega_d$ is the volume of the unit ball (its surface is $d\omega_d$).
The case $d=3$ is physical for electrostatics modeling in the ambient space.
The case $d=2$ also appears at many places in mathematical physics. The case
$d=1$ serves historically as a toy model, less singular but exactly solvable.

For simplicity, we suppose from now on that $d\geq2$.

\begin{figure}[htbp]
  \centering
  \begin{tikzpicture}
    \draw[very thin, dotted] (0,1) -- (1,1);
    \draw[very thin, dotted] (1,0) -- (1,1);
    \draw[very thin, dotted] (0,-1) -- (1,-1);
    \draw[very thin, dotted] (1,0) -- (1,-1);
    \draw[->] (0,0) -- (3.5,0) node[right] {$|x|$};
    \draw[->] (0,-3) -- (0,4.5) node[above] {$g(x)$};
    \draw[domain=0:3,smooth,variable=\x,black,solid,very thick] plot ({\x},{-\x});     
    \draw[domain=0.1:3.25,smooth,variable=\x,black,dashed,very thick] plot
    ({\x},{-ln(\x)});
    \draw[domain=0.5:3.25,smooth,variable=\x,black,dotted,very thick] plot
    ({\x},{1/\x^2});
    \draw (1,0) node [below] {$1$};
    \draw (0,0) node [left] {$0$};
    \draw (0,1) node [left] {$1$};
    \draw (0,-1) node [left] {$-1$};
    \draw[black, dotted, very thick] (2.5,2.5) -- (3,2.5);
    \draw[black, dashed, very thick] (2.5,3) -- (3,3);
    \draw[black, solid, very thick] (2.5,3.5) -- (3,3.5);
    \draw (3,2.5) node [right] {$d=3$};
    \draw (3,3) node [right] {$d=2$};
    \draw (3,3.5) node [right] {$d=1$};    
  \end{tikzpicture}
  \caption{Coulomb kernel in dimension $1$ (solid line) $2$ (dotted line) and
    $3$ (dashed line).\label{fi:g}}
\end{figure}
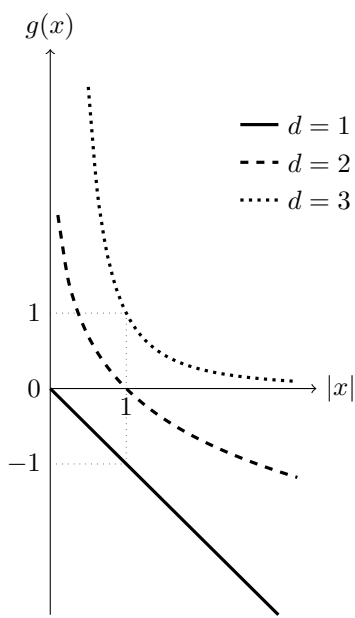

For all $\mu\in\mathcal{M}_1(\mathbb{R}^d)$ such that
$\log(1+\left|\cdot\right|)\mathbf{1}_{d=2}\in\mathrm{L}^1(\mu)$, the
\emph{Coulomb energy} of $\mu$ is
\begin{equation}\label{eq:E}
  \mathcal{E}(\mu)
  =\frac{1}{2}\iint g(x-y)\mathrm{d}\mu(x)\mathrm{d}\mu(y)
  \in(-\infty,+\infty].
\end{equation}
Note that if $d=2$ then $\mathcal{E}(\mu)=+\infty$ if $\mu$ has a Dirac mass.
If $\mu$ models the distribution of unit charges (say electrons) in
$\mathbb{R}^d$ then $\mathcal{E}(\mu)$ is the electrostatic self-interaction
energy of the configuration $\mu$.

We say that a Borel set $B\in\mathcal{B}(\mathbb{R}^d)$ is of \emph{positive
  capacity} when $\mathrm{supp}(\mu)\subset B$ and $\mathcal{E}(\mu)<\infty$
for some $\mu\in\mathcal{M}_1(\mathbb{R}^d)$, and is of \emph{zero capacity}
when it does not carry a probability measure $\mu$ with
$\mathcal{E}(\mu)<\infty$.

For all $\mu\in\mathcal{M}_1(\mathbb{R}^d)$ with
$\log(1+\left|\cdot\right|)\mathbf{1}_{d=2}\in\mathrm{L}^1(\mu)$, the
\emph{Coulomb potential} of $\mu$ at $x\in\mathbb{R}^d$ is defined by
\[
  U_\mu(x)
  =\int g(x-y)\mathrm{d}\mu(y)
  =(g*\mu)(x).
\]
We have $U_\mu(x)\in(-\infty,+\infty]$, and $U_\mu(x)=+\infty$ if $\mu$ has a
Dirac mass at point $x$. We also have
\begin{equation}\label{eq:EU}
  \mathcal{E}(\mu)=\frac{1}{2}\int U_\mu(x)\mathrm{d}\mu(x).
\end{equation}
Since $g\in\mathrm{L}^1_{\mathrm{loc}}(\mathbb{R}^2,\mathrm{d}x)$, the
Fubini\,--\,Tonelli theorem gives
$U_\mu\in\mathrm{L}^1_{\mathrm{loc}}(\mathbb{R}^2,\mathrm{d}x)$, hence
$U_\mu<+\infty$ almost everywhere. Moreover $U_\mu=g*\mu$, and, in the sense
of Schwartz distributions, we get from \eqref{eq:laplace} that
\begin{equation}\label{eq:Umu}
  \Delta U_\mu=-c_d\mu.
\end{equation}
In particular, this gives the formula
\begin{equation}\label{eq:edelta}
  \mathcal{E}(\mu)
  =\frac{1}{2}\int U_\mu\mathrm{d}\mu
  =-\frac{1}{2c_d}\int U_\mu\Delta U_\mu\mathrm{d}x.
\end{equation}

When $d\geq3$, the functional $\mathcal{E}$ does not take negative values on
probability measures because $g\geq0$. However, when $d=2$, the functional
$\mathcal{E}$ may take negative values on compactly supported probability
measures, due to the change of sign of $g$ when $d=2$ inside and outside the
unit ball. For instance if $\mu_r$ is the uniform law on the circle
$\{x\in\mathbb{C}:|x|=r\}$ of radius $r>0$ then, for all $x\in\mathbb{C}^2$,
\begin{equation}\label{eq:Uunif}
  U_{\mu_r}(x)=-\log(r)\mathbf{1}_{|x|\leq r}-\log|x|\mathbf{1}_{|x|>r},
  \quad\text{and}\quad
  \mathcal{E}(\mu_r)=-\frac{\log(r)}{2}, 
\end{equation}
which is negative if $r>1$. See for instance \cite[(0.5.5) and
(I.1.6)]{MR1485778} for these computations. Similary, if $\mu_R$ is the
uniform law on the disc $\{x\in\mathbb{C}:|x|\leq R\}$ of radius $R>0$ then we
find that for all $x\in\mathbb{C}^2$,
\begin{equation}\label{eq:UunifD}
  U_{\mu_R}(x)=-\frac{1}{2}\Big(\frac{|x|^2}{R^2}-1+2\log R\Bigr)\mathbf{1}_{|x|\leq R}-\log|x|\mathbf{1}_{|x|>R},
  \quad\text{and}\quad
  \mathcal{E}_{\mu_R}=\frac{1}{4}-\log(R).
\end{equation}

The functionals $U$ and $\mathcal{E}$ extend to signed measures. If
$\eta=\mu-\nu$ where $\mu$ and $\nu$ are two compactly supported probability
measures on $\mathbb{R}^d$, then $U_\eta$ vanishes at infinity and an
integration by parts gives
\begin{equation}\label{eq:c2c}
  \mathcal{E}(\eta)
  =\frac{1}{2}\int U_\eta\mathrm{d}\eta
  =-\frac{1}{2c_d}\int U_\eta\Delta U_\eta\mathrm{d}x
  =\frac{1}{2c_d}\int|\nabla U_\eta|^2\mathrm{d}x,
\end{equation}
see \cite{MR3309890}. This shows that $\mathcal{E}$ does not take negative
values on signed measures with total mass zero.

The right hand side of \eqref{eq:c2c} is the ``carré du champ'' in potential
theory \cite{MR0448158,MR521767} while $-\nabla U_\mu$ is the electric field
\,--\, ``champ électrique'' in French \,--\, generated by the configuration of
charges $\mu$.

Let us introduce now $V:\mathbb{R}^d\to(-\infty,+\infty]$ such that
(we say then that $V$ is an \emph{admissible potential}):
\begin{itemize}
\item the function $V$ is lower semi-continuous;
\item the set $\{x\in\mathbb{R}^d:V(x)<+\infty\}$ has positive capacity;
\item the function $V$ is not beaten by the Coulomb kernel at infinity, namely
  \begin{equation}\label{eq:Vweak}
    \varliminf_{|x|\to\infty}\left(V(x)-\log\left|x\right|\mathbf{1}_{d=2}\right)>-\infty.
  \end{equation}
\end{itemize}
The \emph{electrostatic energy} with \emph{external potential} $V$ is defined
from $\mathcal{M}_1(\mathbb{R}^d)$ to $(-\infty,+\infty]$ by
\begin{equation}\label{eq:EV}
  \mathcal{E}_V(\mu)=
  \frac{1}{2}\iint\left(g(x-y)+V(x)+V(y)\right)\mu(\mathrm{d}x)\mu(\mathrm{d}y).
\end{equation}
This makes sense since the function under the double integral is bounded below
on $\mathbb{R}^d\times\mathbb{R}^d$ thanks to \eqref{eq:Vweak}. Finally, for
all $\mu\in\mathcal{M}_1(\mathbb{R}^d)$, if both
$\log(1+\left|\cdot\right|)\mathbf{1}_{d=2}$ and $V$ are in
$\mathrm{L}^1(\mu)$, then
\begin{equation}\label{eq:EVbis}
  \mathcal{E}_V(\mu)
  =\mathcal{E}(\mu)+\int V(x)\mathrm{d}\mu(x).
\end{equation}
The external potential plays typically the role of a confinement.

The convexity of the quadratic form $\mathcal{E}_V$ is related to a Bochner
positivity of the kernel $g$, see
\cite{MR0350027,MR1746976,MR3262506,zbMATH07103761}. Indeed for all
$\lambda\in(0,1)$ and $\mu,\nu\in\mathcal{M}_1(\mathbb{R}^d)$ with
$\mathcal{E}(\mu)<+\infty$ and $\mathcal{E}(\nu)<+\infty$ and
$V\in\mathrm{L}^1(\mu)\cap\mathrm{L}^1(\nu)$,
\[
  \frac{\lambda\mathcal{E}_V(\mu)+(1-\lambda)\mathcal{E}_V(\nu)
    -\mathcal{E}_V(\lambda\mu+(1-\lambda)\nu)}
  {\lambda(1-\lambda)}
  =\mathcal{E}(\mu-\nu)
  =\frac{1}{2c_d}\int|\nabla U_{\mu-\nu}|^2\mathrm{d}x\geq0.
\]


We are now ready for the general concept of equilibrium measure and its
properties. The following couple of theorems is a classic in potential theory.
For a proof, we refer for instance to the books
\cite{MR0350027,MR3308615,MR1485778,MR3309890} and to the articles
\cite{MR1465640,MR3262506,MR3888701}.

\begin{theorem}[Equilibrium measure]\label{th:mustar}
  The following properties hold true:
  \begin{enumerate}
  \item $\mathcal{E}_V$ is strictly convex on its domain, is lower
    semi-continuous, with compact level sets;
  \item $\inf_{\mathcal{M}_1(\mathbb{R}^d)}\mathcal{E}_V<+\infty$;
  \item there exists a unique $\mu_V\in\mathcal{M}_1(\mathbb{R}^d)$, called the
    \emph{equilibrium measure}, such that
    \[
      \mathcal{E}_V(\mu_V)
      =\inf_{\mu\in\mathcal{M}_1(\mathbb{R}^d)}\mathcal{E}_V(\mu)
      \quad\text{in other words}\quad
      \mu_V=\arg\min_{\mathcal{M}_1(\mathbb{R}^d)}\mathcal{E}_V.
    \]
  \end{enumerate}
\end{theorem}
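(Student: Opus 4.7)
The plan is the direct method of the calculus of variations: I first establish strict convexity and lower semi-continuity of $\mathcal{E}_V$, then show compactness of its sublevel sets, then extract a weak limit of a minimising sequence and invoke lower semi-continuity to conclude existence; uniqueness is read off from strict convexity.

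For strict convexity, the convexity identity displayed just before the theorem gives, for $\mu,\nu$ in the domain and $\lambda\in(0,1)$,
\[
\lambda\mathcal{E}_V(\mu)+(1-\lambda)\mathcal{E}_V(\nu)-\mathcal{E}_V(\lambda\mu+(1-\lambda)\nu)=\frac{\lambda(1-\lambda)}{2c_d}\int|\nabla U_{\mu-\nu}|^2\,\mathrm{d}x\geq 0,
\]
so the remaining point is to upgrade to strictness: if the right-hand side vanishes then $U_{\mu-\nu}$ is constant, and since $\mu-\nu$ has total mass zero its potential vanishes at infinity, whence $U_{\mu-\nu}\equiv 0$ and then $\mu-\nu=-\Delta U_{\mu-\nu}/c_d=0$ in the sense of distributions. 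For lower semi-continuity I would write $\mathcal{E}_V(\mu)=\frac{1}{2}\iint K\,\mathrm{d}\mu\otimes\mathrm{d}\mu$ with $K(x,y)=g(x-y)+V(x)+V(y)$, which is lower semi-continuous and, by \eqref{eq:Vweak}, bounded below on $\mathbb{R}^d\times\mathbb{R}^d$. Then $K$ is an increasing pointwise supremum of bounded continuous functions $K_n$, and $\mu\mapsto\iint K_n\,\mathrm{d}\mu\otimes\mathrm{d}\mu$ is weakly continuous; by monotone convergence, $\mathcal{E}_V$ is thus a supremum of weakly continuous functionals, hence lower semi-continuous.

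The main obstacle is compactness of the sublevel sets $L_c=\{\mathcal{E}_V\leq c\}$, which by lower semi-continuity and Prokhorov's theorem reduces to tightness. The key input is \eqref{eq:Vweak}: it gives $V(x)\geq\log|x|\mathbf{1}_{d=2}-C$ at infinity, which combined with a pointwise lower bound on $g$ allows one to rewrite $g(x-y)+V(x)+V(y)\geq\Phi(|x|)+\Phi(|y|)-C'$ for an auxiliary function $\Phi$ tending to $+\infty$. Integrating against $\mu\otimes\mu$ yields $\mathcal{E}_V(\mu)\geq\int\Phi(|x|)\,\mathrm{d}\mu(x)-C'$, which forces $\mu(\{|x|>R\})\to 0$ uniformly over $\mu\in L_c$ as $R\to\infty$. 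The delicate point is the sign change of $g$ in dimension two, which one absorbs by symmetrising the self-interaction and using the quadratic structure.

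Item (2) is then easy: the positive-capacity hypothesis on $\{V<+\infty\}$ produces some $\mu$ with $\mathcal{E}(\mu)<+\infty$ supported where $V$ is finite, and choosing $\mu$ to be, say, a normalised Lebesgue measure on a ball where $V$ is bounded ensures $V\in\mathrm{L}^1(\mu)$, so $\mathcal{E}_V(\mu)<+\infty$. For (3), a minimising sequence eventually lies in some $L_c$ and hence admits a weak accumulation point $\mu_V$ by compactness of $L_c$; lower semi-continuity gives $\mathcal{E}_V(\mu_V)=\inf\mathcal{E}_V$, and strict convexity at $\lambda=1/2$ applied to a putative pair of minimisers $\mu_1,\mu_2$ forces $\mathcal{E}(\mu_1-\mu_2)=0$, hence $\mu_1=\mu_2$, giving uniqueness.
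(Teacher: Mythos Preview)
The paper gives no proof here and refers to the literature, so your outline is to be compared with the standard direct-method argument found in those references, which it does follow in broad strokes. The genuine gap is in your tightness step. You assert that \eqref{eq:Vweak} yields a pointwise lower bound $g(x-y)+V(x)+V(y)\geq\Phi(|x|)+\Phi(|y|)-C'$ with $\Phi\to+\infty$; no such $\Phi$ exists under \eqref{eq:Vweak} alone. For $d=2$ with the weakly confining $V(x)=\log(1+|x|)$, fix $y$ bounded and let $|x|\to\infty$: then $-\log|x-y|+V(x)+V(y)\to V(y)$ stays bounded, forcing $\Phi$ bounded. For $d\geq3$, condition \eqref{eq:Vweak} only says $V$ is bounded below, and with $V\equiv0$ the uniform laws on balls of radius $R$ have energy $R^{-(d-2)}\mathcal{E}_V(\mu_1)\to0$: the sublevel sets are not tight and the infimum is not attained, so the statement in fact needs a genuine growth assumption on $V$ in that case.

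For $d=2$ under weak confinement, tightness does hold, but through the mechanism you only gesture at with ``the quadratic structure'': if a fraction $m>0$ of the mass escapes to radius $R$, the energy behaves like $\frac{m^2}{2}\log R+O(1)$, the $m^2$ surviving the cancellation between $\int V\,\mathrm{d}\mu\sim m\log R$ and the interaction terms of order $-(m-\tfrac{m^2}{2})\log R$. This is a measure-level argument and cannot be reduced to a pointwise kernel bound; you should spell it out rather than invoke the nonexistent $\Phi$. Two smaller points: in (2), $V$ need not be bounded on any ball, so pass instead to $\{V\leq M\}$, which has positive capacity for $M$ large; and the identity \eqref{eq:c2c} you use for strictness is stated in the paper only for compactly supported measures, so an approximation step is needed in general.
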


Some examples of equilibrium measures are gathered in Table \ref{tb:mustar}.

\begin{table}
  \centering
  \setlength{\arrayrulewidth}{.8pt}  
  \renewcommand{\arraystretch}{1.8}  
  \begin{tabular}{r|l|l}
    Dimension $d$ 
    & Potential $V$ %
    & Equilibrium measure $\mu_V$\\\hline%
    $\geq1$%
    & $\infty\mathbf{1}_{\left|\cdot\right|>r}$%
    & Uniform on sphere $\{x\in\mathbb{R}^d:|x|=r\}$\\    
    $\geq1$%
    & finite and $\mathcal{C}^2$%
    & With density $\frac{\Delta V}{c_d}$ on the interior of its support\\
    &&(absolutely continuous part of $\mu_*$)\\
    $\geq1$ & $\frac{1}{2}\left|\cdot\right|^2$%
    & Uniform on unit ball with density $\frac{\mathbf{1}_{\left|\cdot\right|\leq1}}{\omega_d}$\\
    (Ginibre) $2$ & $\frac{1}{2}\left|\cdot\right|^2$%
    & Uniform on unit disc with density
      $\frac{\mathbf{1}_{\left|\cdot\right|\leq1}}{\pi}$\\
    (Spherical) $2$%
    & $\frac{1}{2}\log(1+\left|\cdot\right|^2)$%
    & Heavy-tailed with density $\frac{1}{\pi(1+\left|\cdot\right|^2)^2}$\\
    (CUE) $2$%
    & $\infty\mathbf{1}_{([a,b]\times\{0\})^c}$%
    & Arcsine on $[a,b]\times\{0\}$, density
      $s\mapsto\frac{\mathbf{1}_{s\in[a,b]}}{\pi\sqrt{(s-a)(b-s)}}$\\
    (GUE) $2$%
    & $\frac{\left|\cdot\right|^2}{2}\mathbf{1}_{\mathbb{R}\times\{0\}}+\infty\mathbf{1}_{(\mathbb{R}\times\{0\})^c}$
    & Semicircle on $[-2,2]\times\{0\}$, density $s\mapsto\frac{\sqrt{4-s^2}}{2\pi}\mathbf{1}_{s\in[-2,2]}$
  \end{tabular}
  \medskip
  \caption{\label{tb:mustar}Basic examples of equilibrium measures. Some other
    examples are given in Section \ref{se:more}. The last four examples appear
    as limiting spectral distributions of random matrices. The last two
    examples are singular in the sense that the potential is infinite outside
    the real line. They appear as one-dimensional log-gases from random
    matrices. In this case, the equilibrium measure cannot be deduced as a
    specialization of the second example, and its computation is a bit more
    subtle, see for instance \cite{MR1485778}.}
\end{table}

\begin{theorem}[Properties of the equilibrium measure]\label{th:mustarprop}
  The following properties hold true:
  \begin{enumerate}
  \item the equilibrium measure $\mu$ is compactly supported if 
      \begin{equation}\label{eq:Vstrong}
        \lim_{|x|\to\infty}\left(V(x)-\log\left|x\right|\mathbf{1}_{d=2}\right)=+\infty;
      \end{equation}
    \item the equilibrium measure $\mu_V$ has finite Coulomb energy
      $\mathcal{E}(\mu_V)\in\mathbb{R}$;
  \item we have $\mathrm{supp}(\mu_V)\subset\{x\in\mathbb{R}^d:V(x)\leq R\}$
    for some constant $R<\infty$;
  \item the following Euler\,--\,Lagrange equations hold:
    \begin{itemize}
    \item $U_{\mu_V}(x)+V(x)\leq c_V$ for all $x\in\mathrm{supp}(\mu_V)$,
    \item $U_{\mu_V}(x)+V(x)\geq c_V$ for all $x\in\mathbb{R}^d$ except on a set
      of zero capacity,
    \end{itemize}
    where $c_V$ is a quantity called the \emph{modified Robin constant}
    defined by
    \[
      c_V=\mathcal{E}(\mu_V)-\int
      V\mathrm{d}\mu_V.
    \]
    In particular, for all $x\in\mathrm{supp}(\mu_V)$ except on a set of zero
    capacity, we have
    \[
      U_{\mu_V}(x)+V(x)=c.
    \]
    In particular, we have the equality in the sense of distributions
    \[
      \mu_V=\frac{\Delta V}{c_d},
    \]
    and the interior of $\mathrm{supp}(\mu_V)$ does not intersect
    $\{\Delta V<0\}$.
  \end{enumerate}    
\end{theorem}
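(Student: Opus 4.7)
My plan is to prove part~(4) first, since the variational Euler--Lagrange equations are the main engine from which (1), (2), (3), (5) follow as corollaries. For~(4), I would exploit the convexity of $\mathcal{E}_V$ and the minimality of $\mu_V$ from Theorem~\ref{th:mustar}. For any competitor $\nu\in\mathcal{M}_1(\mathbb{R}^d)$ with $\mathcal{E}(\nu)<+\infty$ and $V\in\mathrm{L}^1(\nu)$, set $\mu_t=(1-t)\mu_V+t\nu$ and expand
\[
  \mathcal{E}_V(\mu_t)-\mathcal{E}_V(\mu_V)
  = t\int(U_{\mu_V}+V)\,\mathrm{d}(\nu-\mu_V)
  + t^2\,\mathcal{E}(\nu-\mu_V).
\]
Since $\mu_V$ is the minimizer, the coefficient of $t$ must be nonnegative, giving $\int(U_{\mu_V}+V)\,\mathrm{d}\nu\geq c_V$ where $c_V:=\int(U_{\mu_V}+V)\,\mathrm{d}\mu_V$. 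Testing against sequences of finite-energy probability measures concentrated in ever smaller neighborhoods of a given point --- which exist at every point of positive capacity --- upgrades this to the pointwise inequality $U_{\mu_V}+V\geq c_V$ quasi-everywhere. Since $\mu_V$ has finite Coulomb energy (inherited from finiteness of $\inf\mathcal{E}_V$ together with the lower bound on $V$ coming from lower semicontinuity and \eqref{eq:Vweak}), it does not charge zero-capacity sets; combined with $\int(U_{\mu_V}+V)\,\mathrm{d}\mu_V=c_V$ and the already-established lower bound, this forces the reverse inequality $U_{\mu_V}+V\leq c_V$ to hold $\mu_V$-almost everywhere on $\mathrm{supp}(\mu_V)$.

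Granting~(4), the remaining parts follow in the order (5), (3), (1), (2). For~(5), on the open interior of $\mathrm{supp}(\mu_V)$ the EL identity $U_{\mu_V}+V=c_V$ holds (after modification on a zero-capacity set); applying $-\Delta$ in the sense of Schwartz distributions and invoking \eqref{eq:Umu} yields $c_d\mu_V=\Delta V$ in that open set, and nonnegativity of $\mu_V$ precludes the interior from meeting $\{\Delta V<0\}$. For~(3), the EL upper inequality rewrites as $V\leq c_V-U_{\mu_V}$ on the support, and I combine this with a lower bound on $U_{\mu_V}$ --- namely $U_{\mu_V}\geq0$ when $d\geq3$ since $g\geq0$, and $U_{\mu_V}(x)\geq-\log(1+|x|)-O(1)$ when $d=2$ via the elementary bound $|x-y|\leq 1+|x|+|y|$ together with $\log(1+\left|\cdot\right|)\in\mathrm{L}^1(\mu_V)$ --- to produce a uniform upper bound $V(x)-\log|x|\mathbf{1}_{d=2}\leq R$ on the support. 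Part~(1) is then immediate from~(3) and the strong confinement \eqref{eq:Vstrong}, which forces the relevant sublevel set to be bounded. For~(2), $\mathcal{E}(\mu_V)=\mathcal{E}_V(\mu_V)-\int V\,\mathrm{d}\mu_V$ is finite because $V$ is now bounded on the compact support.

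The main obstacle is the upgrade in part~(4) from the integrated variational inequality to a pointwise quasi-everywhere statement, together with the clean dichotomy between ``quasi-everywhere'' and ``$\mu_V$-almost everywhere.'' This rests on two classical potential-theoretic facts: every point of positive capacity admits an approximating sequence of smooth probability measures of finite energy, and any probability measure of finite Coulomb energy assigns zero mass to sets of zero capacity. A secondary, pervasive subtlety is the sign change of $g$ when $d=2$, which forces the systematic use of the logarithmic corrections dictated by \eqref{eq:Vweak} and \eqref{eq:Vstrong} in all asymptotic estimates on $U_{\mu_V}$ at infinity.
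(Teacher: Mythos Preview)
The paper does not supply its own proof of this theorem: immediately before Theorem~\ref{th:mustar} it states that ``the following couple of theorems is a classic in potential theory'' and simply refers to the books \cite{MR0350027,MR3308615,MR1485778,MR3309890} and the articles \cite{MR1465640,MR3262506,MR3888701}. So there is nothing in the paper to compare your argument against.

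That said, your sketch is exactly the standard variational route taken in those references (notably Saff--Totik \cite{MR1485778}): derive the integrated first-variation inequality from minimality, upgrade to a quasi-everywhere pointwise inequality by localizing with finite-energy competitors, and use that finite-energy measures do not charge polar sets. Two small points worth tightening. First, you obtain $U_{\mu_V}+V\leq c_V$ only $\mu_V$-almost everywhere, whereas the statement asserts it for \emph{all} $x\in\mathrm{supp}(\mu_V)$; the standard upgrade uses that $U_{\mu_V}$ and $V$ are both lower semicontinuous, so a strict violation at one support point would persist on a neighborhood of positive $\mu_V$-mass. Second, note that your $c_V:=\int(U_{\mu_V}+V)\,\mathrm{d}\mu_V=2\mathcal{E}(\mu_V)+\int V\,\mathrm{d}\mu_V$ does not match the formula printed in the statement; this appears to be a typo in the paper rather than an error on your part, since your value is the one forced by integrating the Euler--Lagrange identity against $\mu_V$.
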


\begin{remark}[Logarithmic kernels and Riesz kernels]\label{rm:logriesz}\ %
  \begin{itemize}
  \item The logarithmic kernel in dimension $d$ is given by
    \[
    -\log\left|x\right|,\quad x\in\mathbb{R}^d,x\neq0;
    \]
  \item The Riesz kernel $k_s$ in $\mathbb{R}^d$ with parameter $s>0$ is given
    by
    \[
      \frac{1}{s|x|^s},\quad x\in\mathbb{R}^d,x\neq0.
    \]
  \item The Coulomb kernel in dimension $d\neq2$ matches the Riesz kernel with
    $s=d-2$;
  \item The logarithmic kernel for all $d\geq1$ can be seen as the Riesz
    kernel with $s=0$. Indeed, it suffices to remove the singularity in the
    sense that for all $x\in\mathbb{R}^d$ with $x\neq0$,
    \[
	    \lim_{s\to0}\frac{|x|^{-s}-1}{s}=\partial_{s=0}|x|^{-s}=-\log|x|.
    \]
    In particular the Coulomb kernel in dimension $d=2$ is the Riesz kernel
    with $s\to0$.
  \item For all $\alpha\in(0,d)$, the Riesz kernel with $s=d-\alpha$ is the
    fundamental solution of the fractional Laplace operator 
    $\Delta_\alpha=\Delta^{\frac{\alpha}{2}}$, a
    Fourier multiplier, non-local operator if $\alpha\neq2$, see 
    \cite{MR3262506,MR3455593}.
  \end{itemize}
  We refer for instance to \cite{zbMATH07103761} for more analytic properties
  of these kernels and various applications.
\end{remark}

\section{Coulomb gases}

Let $d\geq2$, $n\geq1$, $\beta>0$, and let $g$ and $V$ as before. Suppose
moreover that $V$ is such that
\begin{equation}\label{eq:Vint}
  \int_{\mathbb{R}^d}\mathrm{e}^{-n\beta(V(x)-\log(1+|x|)\mathbf{1}_{d=2})}
  \mathrm{d}x<\infty.
\end{equation}
By using the fact that $g\geq0$ when $d\geq3$ and $|x-y|\leq(1+|x|)(1+|y|)$ when $d=2$, we
get then
\[
  Z_n=\int_{(\mathbb{R}^d)^n}\mathrm{e}^{-\beta
    E_n(x_1,\ldots,x_n)}
  \mathrm{d}x_1\cdots\mathrm{d}x_n<\infty
\]
where
\[
  E_n(x_1,\ldots,x_n)=n\sum_{i=1}^nV(x_i)+\frac{1}{2}\sum_{i\neq j}g(x_i-x_j).
\]
The \emph{Coulomb gas} $P_n$ is the Boltzmann\,--\,Gibbs probability measure on
$(\mathbb{R}^d)^n$ given by
\begin{equation}\label{eq:P}
  \mathrm{d}P_n(x_1,\ldots,x_n)
  =\frac{\mathrm{e}^{-\beta E_n(x_1,\ldots,x_n)}}{Z_n}
  \mathrm{d}x_1\cdots\mathrm{d}x_n.
\end{equation}
It models a ``gas of electrons'' in $\mathbb{R}^d$ of charge $1/n$, at
positions $x_1,\ldots,x_n$, inverse temperature $\beta n^2$, energy
$(1/n^2)E_n(x_1,\ldots,x_n)$, subject to Coulomb pair interaction and external
field of potential $V$, namely
\begin{equation}\label{eq:H}
  \beta E_n(x_1,\ldots,x_n)
  =\beta n^2\Bigr(\frac{1}{n}\sum_{i=1}^nV(x_i)+\frac{1}{n^2}\sum_{i<j}g(x_i-x_j)\Bigr).
\end{equation}
Beware that we should interpret $P_n$ as a way to model a random static
configuration of charged particles. We deal here with electrostatics rather
than with electrodynamics. The charged particles do not move and there is no
magnetic field. We have only an electric field.

In view of Remark \ref{rm:logriesz}, we could also define log-gases and Riesz
gases. We do not follow this idea in these notes, for simplicity and because
the Coulomb case is by far the most important in applications.

\subsection{One-dimensional log-gases as Coulomb gases}

The formula \eqref{eq:P} makes sense provided that $Z_n>0$. Actually the
integration in \eqref{eq:Vint} should be interpreted as with respect to the
trace of the Lebesgue measure or Hausdorff measure on
$\{V<+\infty\}\subset\mathbb{R}^d$. Similarly the integration in \eqref{eq:P}
should be interpreted as with respect to the trace of the Lebesgue measure or
Hausdorff measure on $\{V<+\infty\}^n\subset(\mathbb{R}^d)^n$. This allows to
incorporate in the Coulomb gas model \eqref{eq:P} the one-dimensional
log-gases of random matrix theory, by taking $d=2$ and $V=+\infty$ on $S^c$
where $S$ is a one-dimensional subset of $\mathbb{R}^2$, typically
$S=\{x\in\mathbb{R}^2:x_2=0\}$ or $S=\{x\in\mathbb{R}^2:|x|=1\}$. This
includes all beta Hermite/Laguerre/Jacobi ensembles, Gaussian
Unitary/Orthogonal/Simplectic Ensembles, etc. For instance, the famous
Gaussian Unitary Ensemble (GUE) corresponds to take $d=2$ and
\[
  x\in\mathbb{R}^2
  \mapsto
  V(x)=
  \begin{cases}
    \displaystyle\frac{|x|^2}{2}&\text{if $x\in S=\mathbb{R}\times\{0\}$}\\
    +\infty&\text{ if not}
  \end{cases}.
\]
For simplicity, we do not study further the one-dimensional log-gases, in
particular the ones coming from random matrix theory. We refer to the books
\cite{MR1677884,MR2129906,MR2168344,MR2514781,MR2641363,MR2760897,MR2808038}.
Actually, most of the models that we consider in the sequel are fully
dimensional in the sense that $V$ is finite everywhere. The simplest models
that we focus on are two-dimensional: beta-Ginibre gases.

\subsection{Beta-Ginibre gas}

The case $d=2$ is known as the \emph{two-dimensional one-component plasma}. We
call it the \emph{beta gas} for short. Its density with respect to the
Lebesgue measure on $(\mathbb{R}^2)^n=\mathbb{C}^n=\mathbb{R}^{2n}$ is
\begin{equation}\label{eq:beta}
  (z_1,\ldots,z_n)\in\mathbb{C}^n\mapsto
  \frac{\mathrm{e}^{-n\beta\sum_{j=1}^nV(z_j)}}{Z_n}\prod_{i<j}|z_i-z_j|^\beta.
\end{equation}
The quadratic potential case $V=\frac{1}{2}\left|\cdot\right|^2$ is sometimes
referred to as the \emph{beta-Ginibre gas}. In the special case $\beta=2$ and
$V=\frac{1}{2}\left|\cdot\right|^2$, that we call the \emph{Ginibre gas}, the
density of $P_n$ can be written as
\begin{equation}\label{eq:ginibre}
  (z_1,\ldots,z_n)\in\mathbb{C}^n\mapsto
  n^n\varphi_n(\sqrt{n}z_1,\ldots,\sqrt{n}z_n)
  \quad\text{with}\quad
  \varphi_n(z_1,\ldots,z_n)=\frac{\mathrm{e}^{-\sum_{j=1}^n|z_j|^2}}{\pi^n\prod_{k=1}^nk!}
  \prod_{i<j}|z_i-z_j|^2.
\end{equation}

The beta gas \eqref{eq:beta} with $V=\frac{1}{2}\left|\cdot\right|^2$ and
$\beta\in\{2,4,6,\ldots\}$ matches the squared modulus of the Laughlin wave
function of the fractional quantum hall effect \cite{Laughlin1987,MR2180172}.
The Ginibre gas \eqref{eq:ginibre} matches the density of the eigenvalues of
Gaussian random matrices \cite{MR0173726} (see Section \ref{se:ginibre} for
more details), the distribution of vortices in the Ginzburg\,--\,Landau
modeling of superconductivity \cite{MR3309890}, and rotating trapped fermions
in two dimensions \cite{PhysRevA.99.021602}. The beta gas \eqref{eq:beta} with
$\beta=2$ such as the Ginibre gas \eqref{eq:ginibre} has a determinantal
structure which provides exact solvability (see Section \ref{se:ginibre} for
more details).


\subsection{From multivariate statistics to atomic physics}

Historically, Coulomb gases emerged in mathematical statistics in the years
1920/30 in the study of the spectral decomposition of empirical covariance
matrices of Gaussian samples. We speak nowadays about Laguerre ensembles and
Wishart random matrices. In the 1950s, Eugene P.\ Wigner discovered by
accident this model when reading a statistics textbook, and this led him to
use random matrices for the modeling of energy levels of heavy nuclei in
atomic physics, see for instance \cite{blog,MR2932622}. His work generated an
enormous trend of activity in statistical physics in the 1960s, with the works
of Gaudin, Mehta, Dyson, Ginibre, Marchenko, Pastur, among others. The term
\emph{Coulomb gas} is already in the abstract of the first seminal article of
Dyson \cite{MR0143556} and of Ginibre \cite{MR0173726}. The terms \emph{Fermi
  gas} and \emph{one-component plasma} are also used.

\subsection{The Wigner jellium and electrons in metals}

It turns out that Coulomb gases are related to another famous model of
mathematical physics also due to Wigner. Let $S\subset\mathbb{R}^d$ be compact
and let $\mu$ be a positive measure on $\mathbb{R}^d$ with
$\mu(\mathbb{R}^d)=\alpha>0$. The Coulomb gas with potential
\[
  V=
  \begin{cases}
    -\frac{1}{n}U_\mu&\text{on $S$},\\
    +\infty&\text{outside $S$}
  \end{cases}
\]
is known as a \emph{Wigner jellium} with background $\mu$, and is said to be
\emph{charge neutral} when $n=\alpha$. The background $\mu$ models a positive
charge $\alpha$ smeared out on $\mathrm{supp}(\mu)$. This model, or more
precisely its thermodynamic limit as $|S|\to+\infty$, was derived by Wigner in
1938 as an approximation of the Hartree\,--\,Fock quantum model in order to
model electrons in metals \cite{TF9383400678}, see also the 1904 pre-quantum
work by Thomson \cite{doi:10.1080/14786440409463107} on electrons and the
structure of atoms. Conversely, a Coulomb gas with smooth potential $V$ can be
seen as a jellium with background $\mu$ of density $\frac{\Delta V}{c_d}$.
From this point of view, by looking at \eqref{eq:UunifD}, the complex Ginibre
ensemble can be seen as a Jellium with full space Lebesgue background. The
measure $\mu$ is positive when $V$ is sub-harmonic (meaning that
$\Delta V\geq0$). If $V$ is not sub-harmonic then $\mu$ is no-longer positive
but we may interpret it as an opposite charge on the subset $\{\Delta V<0\}$.
We refer for instance to
\cite{chafai-garcia-zelada-jung-1d,doi:10.1063/1.5126724} for a bibliography
and a discussion. The term \emph{jellium} was apparently coined by Conyers
Herring, the smeared charge being viewed as a positive ``jelly'', see
\cite{hughes2006theoretical}.

\subsection{Random polynomials}

The Coulomb or log gases emerging from random matrix theory describe the law
of the eigenvalues of a random matrix, the roots of the characteristic
polynomial. This random polynomial has random dependent coefficients. We could
study the distribution of the roots of random polynomials with random
independent coefficients. Actually this question emerged from various fields
of research including algebraic and geometric analysis and number theory, for
instance with the works of Littlewood and Offord in the 1920s, independently
of the works of the statisticians on the spectral analysis of empirical
covariance matrices. The simplest model that we could imagine is a random
polynomial with independent and identically distributed coefficients. This
model is known as Kac polynomials, and the distribution of the roots was
computed in the Gaussian case by John Hammersley in 1956. There are several
other natural models of random polynomials and plenty of works on such models.
The gases emerging from these models are two-dimensional but differ from
Coulomb gases due to the presence in the energy of an additional non-quadratic
term with respect to the empirical measure. For more details, we refer for
instance to \cite{MR1418808,MR1160289,MR3127891,butheze} and references
therein. When the degree tends to infinity, such models give rise to random
analytic functions, see for instance \cite{MR3262481,MR2552864,butheze} and
references therein.


\section{First order global asymptotics and large deviations}

Let $P_n$ be the Coulomb gas as in \eqref{eq:P}. If $x_1,\ldots,x_n$ are
pairwise distinct elements of $\mathbb{R}^d$, which holds almost everywhere
with respect to $P_n$ in $(\mathbb{R}^d)^n$, we get from \eqref{eq:H} that
\[
  E_n(x_1,\ldots,x_n)=n^2\mathcal{E}^{\neq}_V(\mu_{x_1,\ldots,x_n})
\]
where
\[
  \mathcal{E}^{\neq}_V(\mu)=
  \int V\mathrm{d}\mu+\frac{1}{2}\iint\mathbf{1}_{u\neq v}g(u-v)
  \mathrm{d}\mu(u)\mathrm{d}\mu(v)
  \quad\text{and}\quad
  \mu_{x_1,\ldots,x_n}=\frac{1}{n}\sum_{i=1}^n\delta_{x_i}.
\]
The probability measure $P_n$ is \emph{exchangeable} in the sense that it is
invariant by permutation of the $n$ particles. The system is \emph{mean-field}
in the sense that each particle interacts with all the other particles via
their empirical measure. The density of $P_n$ at $(x_1,\ldots,x_n)$ is a
function of $\mu_{x_1,\ldots,x_n}$ and rewrites
\begin{equation}\label{eq:PnEneq}
  \frac{\exp\Bigr(-\beta n^2\mathcal{E}^{\neq}_V(\mu_{x_1,\ldots,x_n})\Bigr)}{Z_n}.
\end{equation}
In terms of asymptotic analysis, we expect that
$\mathcal{E}^{\neq}_V\approx\mathcal{E}_V$ as $n\to\infty$, and the Laplace
method suggests that under $P_n$, the empirical measure $\mu_{x_1,\ldots,x_n}$
concentrates as $n\to\infty$ around the minimizers of $\mathcal{E}_V$. Since
there is a unique minimizer known as the equilibrium measure $\mu_V$, we
expect that the empirical measure $\mu_{x_1,\ldots,x_n}$ under $P_n$ converges
towards $\mu_V$ as $n\to\infty$. More precisely, for all $n$, let us define
\[
  X_n=(X_{n,1},\ldots,X_{n,n})\sim P_n
  \quad\text{and}\quad
  \mu_n=\frac{1}{n}\sum_{k=1}^n\delta_{X_{n,k}}.
\]

\subsection{The large deviations principle}

For all Borel subsets $A\subset\mathcal{M}_1(\mathbb{R}^d)$,
$\mathbb{P}(\mu_n\in A)=P_n(\mu_{x_1,\ldots,x_n}\in A)$. The following theorem
translates mathematically the intuition above based on the Laplace principle:
$\mathbb{P}(\mu_n\in A)\approx_{n\to\infty}\mathrm{e}^{-\beta
  n^2\inf_A(\mathcal{E}_V-\mathcal{E}_V(\mu_V))}$. The difficulty lies in the
singularity of the Coulomb interaction.

\begin{theorem}[Large deviations principle]\label{th:LDP}
  We have
  \[
    \lim_{n\to\infty}\frac{\log Z_n}{\beta n^2}=-\mathcal{E}_V(\mu_V).
  \]
  Moreover the sequence ${(\mu_n)}_n$ satisfies to a large deviations principle
  of speed $n^2$ and good rate function $\mathcal{E}_V-\mathcal{E}_V(\mu_V)$,
  in other words for all Borel subset of
  $A\subset\mathcal{M}_1(\mathbb{R}^d)$, we have
  \[
    \mathcal{E}_V(\mu_V)-\inf_{\mathrm{int}(A)}\mathcal{E}_V
    \leq\varliminf_{n\to\infty}\frac{\log\mathbb{P}(\mu_n\in A)}{\beta n^2}
    \leq\varlimsup_{n\to\infty}\frac{\log\mathbb{P}(\mu_n\in A)}{\beta n^2}
    \leq \mathcal{E}_V(\mu_V)-\inf_{\mathrm{clo}(A)}\mathcal{E}_V
  \]
  where $\mathrm{int(A)}$ and $\mathrm{clo}(A)$ are the interior and closure
  of $A$ respectively. 
\end{theorem}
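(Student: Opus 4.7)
The plan is to carry out a standard Laplace-method analysis based on the identity \eqref{eq:PnEneq}, which writes
\[
  \mathrm{d}P_n(x_1,\ldots,x_n)
  =\frac{\exp\bigl(-\beta n^2\mathcal{E}_V^{\neq}(\mu_{x_1,\ldots,x_n})\bigr)}{Z_n}\mathrm{d}x_1\cdots\mathrm{d}x_n.
\]
Formally, as $n\to\infty$ the functional $\mathcal{E}_V^{\neq}$ should become $\mathcal{E}_V$, and the Laplace principle concentrates the empirical measure around the unique minimizer $\mu_V$ of Theorem~\ref{th:mustar}. The only delicate point is the short-distance singularity of $g$: $\mathcal{E}_V$ is only lower semicontinuous, not continuous, for the weak topology on $\mathcal{M}_1(\mathbb{R}^d)$. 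I would resolve this by monotone truncation: set $g_M=\min(g,M)$ and $\mathcal{E}_V^M(\mu)=\int V\mathrm{d}\mu+\tfrac12\iint g_M\,\mathrm{d}\mu\otimes\mathrm{d}\mu$. Then $g_M\uparrow g$ pointwise, so $\mathcal{E}_V^M\uparrow\mathcal{E}_V$ by monotone convergence, and $\mathcal{E}_V^M$ is weakly lower semicontinuous (indeed continuous once combined with $V$, thanks to \eqref{eq:Vweak}).

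For the lower bound on $Z_n$, I would fix a probability $\nu$ with smooth compactly supported density, finite Coulomb energy $\mathcal{E}(\nu)$ and finite entropy $S(\nu)=\int\nu\log\nu\,\mathrm{d}x$, rewrite $Z_n$ as an integral against $\nu^{\otimes n}$, and apply Jensen's inequality to obtain
\[
  \log Z_n\geq -\beta\Bigl(n^2\int V\mathrm{d}\nu+n(n-1)\mathcal{E}(\nu)\Bigr)-nS(\nu).
\]
Dividing by $\beta n^2$ gives $\liminf_n\log Z_n/(\beta n^2)\geq -\mathcal{E}_V(\nu)$, and mollifying $\mu_V$ on a vanishing scale produces admissible $\nu$ with $\mathcal{E}_V(\nu)\to\mathcal{E}_V(\mu_V)$, hence $\liminf_n\log Z_n/(\beta n^2)\geq -\mathcal{E}_V(\mu_V)$. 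For the matching upper bound I would use the algebraic inequality
\[
  \sum_{i\neq j}g(x_i-x_j)\geq\sum_{i\neq j}g_M(x_i-x_j)=n^2\iint g_M\,\mathrm{d}\mu_n\otimes\mathrm{d}\mu_n-ng_M(0),
\]
which rearranges to $E_n\geq n^2\mathcal{E}_V^M(\mu_n)-ng_M(0)/2$. Combined with exponential tightness at speed $n^2$ (which follows from the coercivity in \eqref{eq:Vweak}\,--\,\eqref{eq:Vint} by a direct estimate on $\mathbb{P}(|X_{n,i}|>R)$), and restricting to the resulting weakly compact $\mathcal{K}\subset\mathcal{M}_1(\mathbb{R}^d)$, lower semicontinuity of $\mathcal{E}_V^M$ yields $\limsup_n\log Z_n/(\beta n^2)\leq -\inf\mathcal{E}_V^M$; letting $M\to\infty$ with monotone convergence returns $-\mathcal{E}_V(\mu_V)$.

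The full large deviations principle is then obtained by applying the same two moves to the restricted partition functions $\int_{\mu_n\in A}\exp(-\beta E_n)\mathrm{d}x_1\cdots\mathrm{d}x_n$. The Jensen argument, with a density $\nu$ chosen close to an arbitrary $\mu_0\in\mathrm{int}(A)$ of finite energy, produces the large deviations lower bound with rate $\mathcal{E}_V-\mathcal{E}_V(\mu_V)$, whereas the truncation inequality combined with a finite cover of $\mathrm{clo}(A)\cap\mathcal{K}$ by small weak neighborhoods yields the upper bound after sending $M\to\infty$. Goodness of the rate function follows from Theorem~\ref{th:mustar}(1). The main obstacle, as I see it, is the double passage $n\to\infty$ followed by $M\to\infty$ in the upper bound: one must argue that $\lim_{M\to\infty}\inf_{B\cap\mathcal{K}}\mathcal{E}_V^M=\inf_{B\cap\mathcal{K}}\mathcal{E}_V$ for shrinking open $B$, which rests on a minimax lemma exploiting the monotonicity $\mathcal{E}_V^M\uparrow\mathcal{E}_V$, weak compactness of $\mathcal{K}$, and lower semicontinuity of $\mathcal{E}_V$. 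Everything else is routine once this interchange is established.
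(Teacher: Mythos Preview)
Your sketch is sound and follows the now-standard route of Ben Arous--Guionnet and Hiai--Petz: a Jensen lower bound against a regularized reference measure, and an upper bound via monotone truncation of the kernel combined with exponential tightness and a compactness/minimax argument to exchange $n\to\infty$ and $M\to\infty$. This is exactly the lineage the paper points to. Note, however, that the paper does not actually give a proof of Theorem~\ref{th:LDP}: the ``About the proof'' block only cites the literature (the original one-dimensional log-gas result, its higher-dimensional extensions, and the more recent streamlined approaches). So there is no in-paper argument to compare with; your outline is in fact considerably more detailed than what the paper provides, and it matches the method of the references it cites. One small caveat worth making explicit in $d=2$: since $g$ is unbounded below at infinity, it is the combined kernel $g_M(x-y)+V(x)+V(y)$, not $g_M$ alone, that is bounded below and lower semicontinuous, and lower boundedness of $V$ (from lower semicontinuity plus \eqref{eq:Vweak}) is what makes your $\mathcal{E}_V^M$ well defined on all of $\mathcal{M}_1(\mathbb{R}^d)$.
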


\begin{proof}[About the proof]
  The first proof of such a result dates back to \cite{MR1465640} and concerns
  one-dimensional log-gases. It is inspired by the work of Voiculescu on a
  Boltzmann point of view over free entropy and random matrices. Later
  contributions include \cite{MR1606719,MR2926763,MR3262506}. The approach
  developed in \cite{zbMATH07206384,MR3825945,zbMATH07133725} is very
  efficient.
\end{proof}

Theorem \ref{th:LDP} remains valid when $\beta=\beta_n$ provided that
\[
  \lim_{n\to\infty}n\beta_n=+\infty.
\]
This can be called the ``low temperature regime''. In the ``high temperature
regime'' $\beta=\beta_n$ with
\[
  \lim_{n\to\infty}n\beta_n=\kappa\in(0,+\infty),
\]
then Theorem \ref{th:LDP} remains valid provided that we replace
$\mathcal{E}_V$ by the new functional
\[
  \mathcal{E}+\frac{1}{\kappa}\mathrm{Entropy}(\cdot\mid\nu_{V,\kappa})
  =
  \mathcal{E}_V+\frac{1}{\kappa}\mathrm{Entropy}(\cdot\mid\mathrm{d}x)
  +c_{V,\kappa}
\]
where $\nu_{V,\kappa}$ has density proportional to $\mathrm{e}^{-\kappa V}$,
and where $\mathrm{Entropy}$ is the Kullback\,--\,Leibler divergence or
relative entropy. Note that $-\mathrm{Entropy}(\cdot\mid\mathrm{d}x)$ is by
definition the Boltzmann\,--\,Shannon entropy. We should also replace $\mu_V$
in Theorem \ref{th:LDP} by the minimizer of this new functional. This is also
known as the \emph{crossover regime}, interpolating between $\mu_V$ and
$\nu_{V,\kappa}$. Formally, if we turn off the interaction by taking $g=0$ and
if we take $\beta_n=\kappa/n$ then $P_n$ is the product probability measure
$\nu_{V,\kappa}^{\otimes n}$ and the large deviations principle becomes the
classical Sanov theorem associated to the law of large numbers for independent
random variables. The crossover regime is considered for instance in
\cite{MR1145596,MR1678526,zbMATH07133725,zbMATH07088025,armstrong-serfaty}.

\subsection{First order global asymptotics}

The (weak) convergence in $\mathcal{M}_1(\mathbb{R}^d)$ is metrized by the
bounded-Lipschitz distance defined by
\[
  \mathrm{d}_{\mathrm{BL}}(\mu,\nu)
  =\sup\Bigr\{\int f\mathrm{d}(\mu-\nu):
  \left\|f\right\|_\infty\leq1,\left\|f\right\|_{\mathrm{Lip}}\leq1\Bigr\}
\]
where the supremum runs over all measurable functions
$f:\mathbb{R}^d\to\mathbb{R}$ and where
\[
  \left\|f\right\|_\infty=\sup_x|f(x)|
  \quad\text{and}\quad
  \left\|f\right\|_{\mathrm{Lip}}=\sup_{x\neq y}\frac{|f(x)-f(y)|}{|x-y|}.
\]
Now for all $r\geq0$, by Theorem \ref{th:LDP} with
$A=A_r=\{\mu\in\mathcal{M}_1(\mathbb{R}^d):\mathrm{d}_{\mathrm{BL}}(\mu,\mu_V)\geq
r\}$, for $n$ large enough,
\begin{equation}\label{eq:LDPn}
  \mathrm{e}^{-c_r\beta n^2}\leq
  \mathbb{P}(\mathrm{d}_{\mathrm{BL}}(\mu_n,\mu_V)\geq r)
  \leq\mathrm{e}^{-C_r\beta n^2},
\end{equation}
where $c_r,C_r>0$ are constants depending on $A_r$ and $\mathcal{E}_V$ but not
on $n$. In particular, for all $\varepsilon>0$, 
\[
  \sum_n\mathbb{P}(\mathrm{d}_{\mathrm{BL}}(\mu_n,\mu_V)>\varepsilon)<\infty.
\]
By the Borel\,--\,Cantelli lemma, it follows that regardless of the way we choose
a common probability space to define the sequence of random vectors
${(X_n)}_n$, we have, almost surely,
\begin{equation}\label{eq:global}
  \lim_{n\to\infty}\mathrm{d}_{\mathrm{BL}}(\mu_n,\mu_V)=0.
\end{equation}
This is a sort of law of large numbers for our system of exchangeable
particles. They are not independent due to the Coulomb interaction, and the
information about the interaction remains in $\mu_V$. We refer to
\cite{MR3309890,berman,zbMATH07133725} for the relation to the notion of
``Gamma convergence''.

\subsection{Weakly confining versus strongly confining potential}

We could say that $V$ is \emph{weakly confining} when \eqref{eq:Vweak} holds,
and that $V$ is \emph{strongly confining} when \eqref{eq:Vstrong} holds. The
integrability condition \eqref{eq:Vint} may hold for weakly confining
potentials. An example of a two dimensional Coulomb gas with a weakly
confining potential is given by the Forrester\,--\,Krishnapur spherical
ensemble considered in the sequel, for which the equilibrium measure is not
compactly supported and is heavy-tailed.

\subsection{Concentration of measure}

The proof of Theorem \ref{th:LDP} can be adapted in order to provide
quantitative (meaning non-asymptotic) estimates for deviation probabilities.
Namely, for all $r\geq0$, 
\[
  \mathbb{P}(\mathrm{d}_{\mathrm{BL}}(\mu_n,\mu_V)\geq r)
  =\frac{1}{Z_n}\int_{\mathrm{d}_{\mathrm{BL}}(\mu_n,\mu_V)\geq r}
  \mathrm{e}^{-\beta n^2\mathcal{E}_V^{\neq}(\mu_{x_1,\ldots,x_n})}
  \mathrm{d}x_1\cdots\mathrm{d}x_n.
\]
Now if we could approximate $\mathcal{E}_V^{\neq}(\mu_{x_1,\ldots,x_n})$ with
$\mathcal{E}_V(\mu_{x_1,\ldots,x_n})$ and use an inequality of the form
\[
  \mathrm{d}_{\mathrm{BL}}(\mu,\mu_V)
  \leq c(\mathcal{E}_V(\mu)-\mathcal{E}_V(\mu_V)),
\]
and use a bound of the form 
\begin{equation}\label{eq:Zgeq}
  \log Z_n
  \geq
  n^2\mathcal{E}_V(\mu_V)+n(\beta\mathcal{E}(\mu_V)+c_V),
\end{equation}
then we would obtain a concentration of measure inequality of the form 
\[
  \mathbb{P}(\mathrm{d}(\mu_n,\mu_V)\geq r)
  \leq\mathrm{e}^{-cn^2r^2+o(n^2)}.
\]
for all $n$ and all $r\geq r_n$ for some threshold $r_n$. Actually the
quantity $\mathcal{E}_V(\mu_{x_1,\ldots,x_n})$ is infinite due to the atomic
nature of $\mu_{x_1,\ldots,x_n}$ and the method requires then a regularization
procedure. The details are in \cite{MR3820329}. The method, inspired by
\cite{MR3201924}, is related to \cite{MR3455593}. See also
\cite{MR3933036,zbMATH07107321,berman-conc,padilla-garza-concentration} for
other variations on this topic. Moreover we could replace the
bounded-Lipschitz distance by a Kantorovich\,--\,Wasserstein distance,
provided a growth assumption on $V$.

Such concentration inequalities around the equilibrium measure provide
typically an upper bound on the speed of the almost sure convergence. More
precisely if $r_n$ is such that
$\sum_n\mathrm{e}^{-cn^2r_n^2+o(n^2)}<\infty$, then by the
Borel\,--\,Cantelli lemma, we get that almost surely, for $n$ large enough,
\[
  \mathrm{d}_{\mathrm{BL}}(\mu_n,\mu_V) \leq r_n.
\]


On the other hand, in the case of one-dimensional log-gases with strongly
convex potential $V$ such as the Gaussian unitary ensemble, another approach
is possible for concentration of measure, related to logarithmic Sobolev
inequalities, see for instance \cite{MR4175749} and references therein.

\section{Edge behavior}

We suppose in this section that $V$ is strongly confining, in particular the
equilibrium measure $\mu_V$ is compactly supported (Theorem
\ref{th:mustarprop}). The convergence \eqref{eq:global} holds in a weak sense,
which does not imply the convergence of the support. The most general result
about the convergence of the support is probably \cite{10.1214/21-EJP613}, and
appears as a refinement of \cite[Theorem 1.12]{MR3820329}. When $V$ is
rotationally invariant, this provides constants $c,r_*,p>0$ such that for all
$n$ and $r\geq r_*$,
\[
  \mathbb{P}\Bigr(\max_{1\leq k\leq n}|X_{n,k}|\geq r\Bigr)
  \leq\mathrm{e}^{-cnr^p}.
\]


The fluctuation at the edge is a difficult subject which is well understood
for one-dimensional log-gases, for which it gives rise to Tracy\,--\,Widom
laws. For strongly confined rotationally invariant determinantal
two-dimensional Coulomb gases, it gives rise to Gumbel laws. An explicit
analysis of the Ginibre Coulomb gas is presented in the sequel (Theorem
\ref{th:ginibre-spectral-radius}), see also
\cite{MR3215627,MR3615091,doi:10.1063/1.5126724,MR4179777,10.1214/21-EJP613,raphael-david,david-edge,butez2021universality,chafai-garcia-zelada-jung-1d}
for more results in the same spirit.

\section{Global fluctuations and Gaussian free field}

Formally, from \eqref{eq:c2c} we could write
\[
  \mathcal{E}(\mu)
  =\frac{1}{2}\langle-c_d^{-1}\Delta U_\mu,U_\mu\rangle
  +\langle-c_d^{-1}\Delta V,U_\mu\rangle,
\]
and thus
\[
  \beta n^2\mathcal{E}(\mu)
  =\frac{1}{2}\langle-\beta c_d^{-1}\Delta U_{n\mu},U_{n\mu}\rangle
  +\langle -n\beta c_d^{-1}\Delta V,U_{n\mu}\rangle.
\]
In view of the Coulomb gas formula \eqref{eq:PnEneq}, this suggests to
interpret as $n\to\infty$ the random function $U_{n\mu_{x_1,\ldots,x_n}}$
under $P_n$ as a Gaussian with covariance operator $K=c_d(-\beta\Delta)^{-1}$.
Actually such an object is known as a \emph{Gaussian Free Field} (GFF). Next,
again from \eqref{eq:c2c}, this suggests to interpret formally as $n\to\infty$
the random measure
$n\mu_{x_1,\ldots,x_n}=-c_d^{-1}\Delta
U_{\mu_{x_1,\ldots,x_n}}=AU_{\mu_{x_1,\ldots,x_n}}$ under $P_n$ as a Gaussian
random measure with covariance operator
$A^2K=(-c_d^{-1}\Delta)^2(c_d(-\beta\Delta)^{-1})=-(\beta c_d)^{-1}\Delta$.
This argument would involve in principle a change of variable and a Jacobian,
that we do not consider here. This leads naturally to conjecture that for a
smooth enough test function $f:\mathbb{R}^d\to\mathbb{R}$,
\begin{equation}\label{eq:CLT}
  n\Bigr(\int f\mathrm{d}\mu_n
  -\mathbb{E}\int f\mathrm{d}\mu_n\Bigr)
  =\sum_{k=1}^nf(X_{n,k})-\mathbb{E}(f(X_{n,k}))
  \underset{n\to\infty}{\overset{\mathrm{d}}{\longrightarrow}}
  \mathcal{N}\Bigr(0,\frac{1}{\beta c_d}\int|\nabla f|^2\mathrm{d}x\Bigr).
\end{equation}
This can be seen as the ``central limit theorem'' statement associated to the
``law of large numbers'' statement \eqref{eq:global}. The limiting variance
could be perturbed by edge effects depending on the relative position of the
support and regularity of $f$ and $\mu_V$. We could have also an additional
bias correction.


The Coulomb interaction together with the confinement produces a rigidity of
the global configuration and reduces the variance of linear statistics. Indeed
\eqref{eq:CLT} comes with an $n$ scaling that differs from the usual
$\sqrt{n}$ scaling for independent random variables (no interaction).

The covariance of the limiting Gaussian in \eqref{eq:CLT} is easily guessed
from the Hessian at the minimizer of the rate function in the large deviations
principle of Theorem \ref{th:LDP}. This CLT\,--\,LDP link is well known. The
GFF is an example of a log-correlated Gaussian field \cite{zbMATH06741334}, a
fashionable subject.

A statement similar to \eqref{eq:CLT} is proved rigorously in \cite{MR2361453}
for the complex Ginibre ensemble by using its exact solvability (determinantal
structure). See also \cite{MR3342661,MR2817648}. Extensions to non-exactly
solvable two-dimensional Coulomb gases are considered in
\cite{MR4063572,MR3788208,serfaty-clt,leble-zeitouni}.

For one-dimensional log-gases emerging from random matrix theory, central
limit theorems such as \eqref{eq:CLT} were established using the Laplace
transform and ``loop equations'' in \cite{MR1487983}. See also
\cite{MR2808038,MR3668648,MR3885548,MR4211032} and references therein for
extensions and generalizations.

\section{Aspects of general exact solvability}

Theorem \ref{th:exactgeneral} is taken from \cite{blog-vw} and
\cite{MR4175749} (see also \cite{zbMATH07293730}).

\begin{theorem}[Exact distributions for special linear statistics of general gases]\label{th:exactgeneral}
  Let $X=(X_1,\ldots,X_n)$ be a random vector of $(\mathbb{R}^d)^n$,
  $n,d\geq1$, with density proportional to
  \[
    \mathrm{e}^{-\sum_i V(x_i)}\prod_{i<j}W(x_i-x_j)
  \]
  where $V:\mathbb{R}^d\to[0,+\infty]$ and $W:\mathbb{R}^d\to[0,+\infty]$ are
  measurable.
  \begin{itemize}
  \item If $V$ and $W$ are homogeneous in the sense that for some $a,b\geq0$,
    and for all $\lambda\geq0$ and $x\in\mathbb{R}^d$,
    \[
      V(\lambda x)=\lambda^a V(x)
      \quad\text{and}\quad
      W(\lambda x)=\lambda^bW(x),
    \]
    then
    \[
      V(X_1)+\cdots+V(X_n)
      \sim\mathrm{Gamma}\Bigr(\frac{nd}{a}+\frac{n(n-1)b}{2a},1\Bigr).
    \]
  \item If $V=\gamma\left|\cdot\right|^2$ for some $\gamma>0$ then
    \[
      X_1+\cdots+X_n\sim\mathcal{N}\Bigr(0,\frac{n}{2\gamma}I_d\Bigr),
    \]
    and moreover the orthogonal projection $\pi$ on the subspace
    $\{(z,\ldots,z):z\in\mathbb{R}^d\}$ of $(\mathbb{R}^d)^n$ satisfies
    $\pi(X)=\frac{X_1+\cdots+X_n}{n}(1,\ldots,1)$, and furthermore $\pi(X)$
    and $\pi^\perp(X)=X-\pi(X)$ are independent.
  \end{itemize}
\end{theorem}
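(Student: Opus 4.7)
The plan is to prove both parts by computing Laplace transforms and by exploiting the scaling/quadratic structure of the density, so no determinantal or explicit special-function machinery is needed. Let $Z$ denote the normalising constant of the density.

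For the first part, I would compute the Laplace transform of $S_n=V(X_1)+\cdots+V(X_n)$ directly. For $t>-1$ one has
\[
  \mathbb{E}[\mathrm{e}^{-tS_n}]
  =\frac{1}{Z}\int_{(\mathbb{R}^d)^n}\mathrm{e}^{-(1+t)\sum_i V(x_i)}\prod_{i<j}W(x_i-x_j)\,\mathrm{d}x_1\cdots\mathrm{d}x_n.
\]
The key step is the change of variables $x_i=\lambda y_i$ with $\lambda=(1+t)^{-1/a}$, so that by homogeneity $(1+t)V(\lambda y_i)=V(y_i)$ while $W(\lambda(y_i-y_j))=\lambda^b W(y_i-y_j)=(1+t)^{-b/a}W(y_i-y_j)$; the Jacobian contributes $\lambda^{nd}=(1+t)^{-nd/a}$. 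After collecting exponents one gets
\[
  \mathbb{E}[\mathrm{e}^{-tS_n}]
  =(1+t)^{-nd/a-n(n-1)b/(2a)},
\]
the factor $Z$ cancelling because the transformed integrand coincides with the original density times a deterministic constant. This is exactly the Laplace transform of the claimed Gamma law, and by the injectivity of the Laplace transform on $(-1,\infty)$ (or uniqueness of moments, since the Gamma law is determined by its moments) the distribution is identified.

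For the second part with $V(x)=\gamma|x|^2$, write the ambient space $(\mathbb{R}^d)^n$ as the orthogonal sum of the diagonal $D=\{(z,\ldots,z):z\in\mathbb{R}^d\}$ and its complement $D^\perp$, so that $\pi(X)=\bar{X}(1,\ldots,1)$ with $\bar{X}=(X_1+\cdots+X_n)/n$, and $|X|^2=|\pi(X)|^2+|\pi^\perp(X)|^2$ by Pythagoras. The crucial observation is that each difference $X_i-X_j$ depends only on $\pi^\perp(X)$ (it is invariant under adding the same vector to every coordinate), so the density factorises as
\[
  \mathrm{e}^{-\gamma|\pi(X)|^2}\cdot\mathrm{e}^{-\gamma|\pi^\perp(X)|^2}\prod_{i<j}W(X_i-X_j),
\]
where only the first factor depends on $\pi(X)$. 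By Fubini applied in the coordinates adapted to $D\oplus D^\perp$, this establishes the independence of $\pi(X)$ and $\pi^\perp(X)$ and shows that $\pi(X)$ has density proportional to $\mathrm{e}^{-\gamma|\pi(X)|^2}$ on $D\cong\mathbb{R}^d$. Since $|\pi(X)|^2=n|\bar{X}|^2$, one reads off $\bar{X}\sim\mathcal{N}(0,(2\gamma n)^{-1}I_d)$, hence $X_1+\cdots+X_n=n\bar{X}\sim\mathcal{N}(0,(n/(2\gamma))I_d)$.

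The main obstacle in both parts is keeping track of cases where $V$ or $W$ takes the value $+\infty$ (so the density lives on a proper subset) and justifying the change of variables there: one needs $\{V<\infty\}$ to be a cone so that the scaling $x_i\mapsto\lambda x_i$ preserves the support, which is automatic from the homogeneity assumption since $V(\lambda x)=\lambda^a V(x)$ forces $\{V<\infty\}$ to be invariant under positive scaling. A similar cone condition on $\{W>0\}$ follows from the homogeneity of $W$. Once these support issues are handled, the computations above go through and no further analytic input is needed.
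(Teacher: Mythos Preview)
Your proof is correct and follows essentially the same approach as the paper: the first part is identical (Laplace transform via the scaling substitution $x_i=(1+t)^{-1/a}y_i$), and for the second part both you and the paper use the orthogonal decomposition $D\oplus D^\perp$ together with the translation-invariance of $W$ to factorise the density. The only minor difference is that the paper first establishes the Gaussian law of $X_1+\cdots+X_n$ separately via a Laplace transform with the shift $y_i=x_i+\theta/(2\gamma)$ and then invokes the factorisation for independence, whereas you read off the Gaussian law directly from the factorisation---a small streamlining, not a different route.
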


\begin{proof}[Proof of Theorem \ref{th:exactgeneral}]
  Recall that linear change of variable is valid for integrals of measurable
  functions. \emph{First formula.} For all $\theta>0$, we have, with the
  substitution $x_i=\bigr(\frac{1}{1+\theta}\bigr)^{1/a}y_i$,
  \[
    \int_{(\mathbb{R}^d)^n}
    \mathrm{e}^{-\theta\sum_iV(x_i)}
    \mathrm{e}^{-\sum_iV(x_i)}
    \prod_{i<j}W(x_i-x_j)\mathrm{d}x
    =\Bigr(\frac{1}{1+\theta}\Bigr)^{\frac{nd}{a}+\frac{(n^2-n)}{2}\frac{b}{2a}}
    \int_{(\mathbb{R}^d)^n}\mathrm{e}^{-\sum_iV(y_i)}\prod_{i<j}W(y_i-y_j)\mathrm{d}y.
  \]
  We recognize the Laplace transform of 
  $\mathrm{Gamma}\bigr(\frac{nd}{a}+\beta\frac{n(n-1)b}{2a},1\bigr)$, namely
  \[
    \int_0^\infty\mathrm{e}^{-\theta x}x^{\alpha-1}\mathrm{e}^{-\lambda x}\mathrm{d}x
    =\int_0^\infty x^{\alpha-1}\mathrm{e}^{-(\lambda+\theta)x}\mathrm{d}x
    =\Bigr(\frac{\lambda}{\lambda+\theta}\Bigr)^\alpha\frac{\Gamma(\alpha)}{\lambda^\alpha},
  \]
  therefore 
  $\sum_iV(X_i)\sim\mathrm{Gamma}\bigr(\frac{nd}{a}+\frac{n(n-1)b}{2a},1\bigr)$.

  \emph{Second formula.} For all $\theta\in\mathbb{R}^d$, we have, with the
  substitution $y_i=x_i+\frac{1}{2\gamma}\theta$ (a translation or shift), 
  \[
    \int_{(\mathbb{R}^d)^n}
    \mathrm{e}^{-\theta\cdot\sum_i x_i}
    \mathrm{e}^{-\sum_iV(x_i)}    
    \prod_{i<j}W(x_i-x_j)\mathrm{d}x
    =
    \mathrm{e}^{\frac{n}{2\gamma}\frac{|\theta|^2}{2}}
    \int_{(\mathbb{R}^d)^n}\mathrm{e}^{-\sum_iV(y_i)}\prod_{i<j}W(y_i-y_j)\mathrm{d}y,
  \]
  and we recognize the Laplace transform of the Gaussian law
  $\mathcal{N}\bigr(0,\frac{n}{2\gamma}I_d\bigr)$. Finally the properties
  related to $\pi(X)$ follow from the quadratic nature of $V$ and the shift
  invariance of $W$, and correspond to a factorization of the law of $X$,
  namely, denoting $\pi^\perp(x)=x-\pi(x)$ and using
  $|x|^2=|\pi(x)|^2+|\pi^\perp(x)|^2$ (Pythagoras theorem) and
  $x_i-x_j=\pi^\perp(x)_i-\pi^\perp(x)_j$ (from the definition of $\pi$), we
  get
  \[
    \mathrm{e}^{-\sum_i V(x_i)}\prod_{i<j}W(x_i-x_j)
    =\mathrm{e}^{-\gamma|\pi(x)|^2}
    \times
    \mathrm{e}^{-\gamma|\pi^\perp(x)|^2}\prod_{i<j}W(\pi^\perp(x)_i-\pi^\perp(x)_j).
  \]
  This provides the independence of $\pi(X)$ and $\pi^\perp(X)$ as well as the
  fact that $\pi(X)\sim\mathcal{N}(0,\frac{1}{2\gamma}I_d)$.
\end{proof}

\begin{corollary}[Exact laws for beta-Ginibre gases]\label{co:beta}
  Let us consider $X_n=(X_{n,1},\ldots,X_{n,n})\sim P_n$ where $P_n$ is as in
  \eqref{eq:beta} with $\beta>0$ and $V=\frac{1}{2}\left|\cdot\right|^2$. In
  other words, the density of $P_n$ in $\mathbb{C}^n$ is given by
  \[
    (z_1,\ldots,z_n)\in\mathbb{C}^n
    \mapsto\frac{\mathrm{e}^{-n\frac{\beta}{2}(|z_1|^2+\cdots+|z_n|^2)}}{Z_n}\prod_{i<j}|z_i-z_j|^\beta.
  \]
  Then 
  \[
    X_{n,1}+\cdots+X_{n,n}\sim\mathcal{N}\Bigr(0,\frac{I_2}{\beta}\Bigr)
    \quad\text{and}\quad
    |X_n|^2=|X_{n,1}|^2+\cdots+|X_{n,n}|^2
    \sim\mathrm{Gamma}\Bigr(n+\beta\frac{n(n-1)}{4},\beta\frac{n}{2}\Bigr),
  \]
  and in particular
  \[
    \mathbb{E}(|X_{n,1}+\cdots+X_{n,n}|^2)=\frac{2}{\beta}
    \quad\text{and}\quad
    \mathbb{E}(|X_n|^2)
    =\mathbb{E}(|X_{n,1}|^2+\cdots+|X_{n,n}|^2)
    =\frac{2}{\beta}+\frac{n-1}{2}.
  \]
\end{corollary}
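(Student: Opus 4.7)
The plan is to apply Theorem \ref{th:exactgeneral} directly, with only a rescaling argument needed at the end. First I would identify the parameters. Viewing $\mathbb{C}^n$ as $(\mathbb{R}^2)^n$, the density of $P_n$ has the form $e^{-\sum_i \widetilde{V}(x_i)}\prod_{i<j}W(x_i-x_j)$ up to the normalization $Z_n$, where
\[
  \widetilde{V}(x) = \frac{n\beta}{2}|x|^2, \qquad W(x) = |x|^\beta.
\]
Both are nonnegative and measurable on $\mathbb{R}^2$. They are also homogeneous, with $\widetilde{V}(\lambda x) = \lambda^2 \widetilde{V}(x)$ and $W(\lambda x) = \lambda^\beta W(x)$, so we take $a = 2$, $b = \beta$, $d = 2$, and $\gamma = n\beta/2$ in the statement of Theorem~\ref{th:exactgeneral}.

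For the Gaussian statement, the second bullet of Theorem~\ref{th:exactgeneral} applied with $\gamma = n\beta/2$ gives immediately
\[
  X_{n,1}+\cdots+X_{n,n} \sim \mathcal{N}\Bigl(0, \frac{n}{2\gamma} I_2\Bigr) = \mathcal{N}\Bigl(0,\frac{I_2}{\beta}\Bigr),
\]
and the announced expectation $\mathbb{E}(|X_{n,1}+\cdots+X_{n,n}|^2) = 2/\beta$ is the trace of the covariance. For the Gamma statement, the first bullet of the theorem gives
\[
  \widetilde{V}(X_{n,1})+\cdots+\widetilde{V}(X_{n,n})
  = \frac{n\beta}{2}\,|X_n|^2
  \sim \mathrm{Gamma}\Bigl(\frac{nd}{a}+\frac{n(n-1)b}{2a},\,1\Bigr)
  = \mathrm{Gamma}\Bigl(n+\frac{\beta n(n-1)}{4},\,1\Bigr).
\]
Using the scaling property that if $Y\sim\mathrm{Gamma}(\alpha,1)$ then $Y/\lambda\sim\mathrm{Gamma}(\alpha,\lambda)$, we divide by $n\beta/2$ to conclude
\[
  |X_n|^2 \sim \mathrm{Gamma}\Bigl(n+\frac{\beta n(n-1)}{4},\,\frac{n\beta}{2}\Bigr).
\]

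Finally, the mean of a $\mathrm{Gamma}(\alpha,\lambda)$ law is $\alpha/\lambda$, so
\[
  \mathbb{E}(|X_n|^2) = \frac{n+\beta n(n-1)/4}{n\beta/2} = \frac{2}{\beta}+\frac{n-1}{2},
\]
which closes the list of claims. There is no real obstacle here: the only substantive check is that the Jacobians and normalization $Z_n$ cancel correctly in the Laplace-transform computation that underlies Theorem~\ref{th:exactgeneral}, which is precisely what that theorem has already taken care of. The mildest bit of care is the distinction between the shape-rate and the shape-scale parameterizations of the Gamma distribution when rescaling from the unit-rate output of the theorem to the rate $n\beta/2$ appearing in the statement.
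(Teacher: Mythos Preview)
Your proof is correct and follows essentially the same route as the paper: apply Theorem~\ref{th:exactgeneral} with $d=2$, $\widetilde V = \tfrac{n\beta}{2}|\cdot|^2$, $W=|\cdot|^\beta$, $a=2$, $b=\beta$, $\gamma=\tfrac{n\beta}{2}$, and then use the Gamma scaling property to pass from rate $1$ to rate $\tfrac{n\beta}{2}$.
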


When $\beta=2$ we recover the Ginibre gas \eqref{eq:ginibre}. Beyond this
case, and up to our knowledge, it seems that there is no useful matrix model
with independent entries for which the spectrum follows this $\beta$ gas.

With $\beta=\frac{1}{n}$, we get as $n\to\infty$ that the variance of the
Gauss\,--\,Ginibre crossover is $2+\frac{1}{2}=\frac{5}{2}$.

\begin{proof}[Proof of Corollary \ref{co:beta}]
  It suffices to use Theorem \ref{th:exactgeneral} with $d=2$,
  $V=n\frac{\beta}{2}\left|\cdot\right|^2$, $W=\left|\cdot\right|^\beta$, for
  which $a=2$ and $b=\beta$, and the scaling property
  $\sigma Z\sim\mathrm{Gamma}\bigr(\alpha,\frac{\lambda}{\sigma}\bigr)$ when
  $Z\sim\mathrm{Gamma}(\alpha,\lambda)$, for any $\sigma>0$.

  Note that in the determinantal case $\beta=2$, Theorem \ref{th:kostlan}
  gives that $n|X|^2\sim\mathrm{Gamma}(1+2+\cdots+n,1)$ since it has the law
  of a sum of $n$ independent random variables of law
  $\mathrm{Gamma}(1,1),\ldots,\mathrm{Gamma}(n,1)$.
\end{proof}


\begin{remark}[Real case]
  For all $\beta>0$, $n\geq2$, let $P_n$ be the law on $\mathbb{R}^n$ with
  density 
  \[
    (x_1,\ldots,x_n)\in\mathbb{R}^n\mapsto
    \frac{\mathrm{e}^{-n\frac{\beta}{4}(x_1^2+\cdots+x_n^2)}}{Z_n}\prod_{i<j}|x_i-x_j|^\beta.
  \]
  The normalization $Z_n$ can be explicitly computed via a Mehta\,--\,Selberg
  integral \cite{zbMATH05348712}. It is a quadratically confined
  one-dimensional log-gas known as the real beta Hermite gas. The case
  $\beta=2$ corresponds to GUE. If $X_n=(X_{n,1},\ldots,X_{n,n})\sim P_n$,
  then the proof of Theorem \ref{co:beta} provides
  \[
    X_{n,1}+\cdots+X_{n,n}=\mathcal{N}\left(0,\frac{2}{\beta}\right)
    \quad\text{and}\quad
    X_{n,1}^2+\cdots+X_{n,n}^2
    \sim\mathrm{Gamma}\left(\frac{n}{2}+\frac{\beta n(n-1)}{4},\frac{\beta
        n}{4}\right),
  \]
  and in particular,
  \[
    \mathbb{E}((X_{n,1}+\cdots+X_{n,n})^2)=\frac{2}{\beta}
    \quad\text{and}\quad
    \mathbb{E}(X_{n,1}^2+\cdots+X_{n,n}^2)=\frac{2}{\beta}+n-1.
  \]
  Alternatively, these formulas can also be derived by using the tridiagonal
  random matrix model of Dumitriu and Edelman \cite{MR1936554} valid for all
  real beta Hermite gases, see for instance \cite{MR4175749}.
\end{remark}

\begin{remark}[Langevin dynamics]
  The Boltzmann\,--\,Gibbs measure $P_n$ defined in \eqref{eq:P} is the
  invariant law of the Kolmogorov diffusion process ${(X_t)}_{t\geq0}$
  solution of the stochastic differential equation
  \begin{equation}\label{eq:SDE}
    \mathrm{d}X_t
    =\sqrt{2\frac{\alpha}{\beta}}\mathrm{d}B_t-\alpha\nabla E_n(X_t)\mathrm{d}t
  \end{equation}
  where ${(B_t)}_{t\geq0}$ is a standard Brownian motion on
  $(\mathbb{R}^d)^n$, and where $\alpha>0$ is an arbitrary parameter which
  corresponds to a deterministic time change. The infinitesimal generator of
  the associated Markov semi-group is the second order linear differential
  operator without constant term
  \begin{equation}\label{eq:L}
    L=\alpha\Bigr(\frac{1}{\beta}\Delta-\nabla E_n\cdot\nabla\Bigr).
  \end{equation}
  The most standard parametrizations are $\alpha=1$, which allows to interpret
  $1/\beta$ as the temperature of the Brownian part, and $\alpha=\beta$. See
  for instance \cite{MR3434251,MR2352327,MR3155209}. Since $E_n$ is a two-body
  interaction energy, the operator $L$ can be seen as a mean-field particle
  approximation of a McKean\,--\,Vlasov dynamics, see for instance
  \cite{MR3847984,MR4158670}. 
  The singularity of $g$ makes non-obvious the well-posedness or absence of
  explosion of \eqref{eq:SDE}, and we refer to
  \cite{MR1217451,MR2760897,MR3699468} for one-dimensional log-gases, and to
  \cite{MR3847984} for the (two-dimensional) beta-Ginibre gas. See also
  \cite{zbMATH07202715} and \cite{zbMATH07088025} for more recent results on
  such dynamics. Historically \eqref{eq:SDE} emerges as the description of the
  dynamics of the spectrum of Hermitian Ornstein\,--\,Uhlenbeck processes, and
  is nowadays called a Dyson process, named after \cite{MR0148397}. We say
  that \eqref{eq:SDE} is a gradient dynamics because the drift is the gradient
  of a function. From the point of view of statistical physics a stochastic
  differential equation such as \eqref{eq:SDE} is also known as an overdamped
  Langevin dynamics, which is a degenerate version of the true (kinetic
  under-damped) Langevin dynamics, see for instance \cite{MR3911782}. Langevin
  dynamics can be used for the numerical simulation of $P_n$, see for instance
  \cite{MR3911782,zbMATH07293730} and references therein. Dynamics such as
  \eqref{eq:SDE} can be used as an interpolation device between $X_0$ and
  $X_\infty\sim P_n$, possibly by using conservation laws related to
  eigenfunctions. In this spirit, and following \cite{MR3847984,MR4175749},
  we could prove Corollary \ref{co:beta} by using the fact that $\sum_ix_i$
  and $\sum_i|x_i|^2$ are essentially eigenfunctions of \eqref{eq:L},
  producing Ornstein\,--\,Uhlenbeck and Cox\,--\,Ingersoll\,--\,Ross processes
  for which the targeted Gaussian and Gamma laws are invariant.
\end{remark}

\section{Exactly solvable two-dimensional gases from random matrix theory}

The spectrum of several random matrix models are gases in dimension
$d\in\{1,2\}$ with $W=g$. The cases $\beta\in\{1,2,4\}$ play often a special
role related to algebra. We refer to \cite{MR2129906,MR2168344,MR2641363} for
more details on the zoology of random matrices. It is natural to ask if there
exists a random matrix model with independent entries for which the spectrum
is distributed according to the beta gas \eqref{eq:beta}. Up to our knowledge,
the answer is negative for \eqref{eq:beta} in general but positive for the
Ginibre gas \eqref{eq:ginibre}.

\subsection{Ginibre model}\label{se:ginibre}

A (complex) Ginibre random matrix $\mathbf{M}$ is an $n\times n$ complex
matrix such that 
\begin{equation}\label{eq:ginibre-matrix}
  \Bigr\{\Re\mathbf{M}_{i,j},\Im\mathbf{M}_{i,j}:1\leq i,j\leq n\Bigr\}
\end{equation}
are independent and Gaussian random variables of law
$\mathcal{N}(0,\frac{1}{2})$. In other words 
the complex random variables $\{\mathbf{M}_{i,j}:1\leq i,j\leq n\}$ are
independent and Gaussian of law $\mathcal{N}(0,\frac{1}{2}I_2)$. Note that
$\mathbb{E}(|\mathbf{M}_{i,j}|^2)=1$.

Let $(\boldsymbol{\lambda}_1,\ldots,\boldsymbol{\lambda}_n)$ be the
eigenvalues of $\mathbf{M}$ seen as an exchangeable random vector of
$\mathbb{C}^n$. This means that we randomize the numbering of the eigenvalues
with an independent uniform random permutation of $\{1,\ldots,n\}$.
Equivalently, this corresponds to consider the random multi-set encoding the
spectrum, keeping by this way the possible multiplicities but discarding the
numbering of the eigenvalues.

\begin{theorem}[From the Ginibre random matrix to the Ginibre gas]\label{th:ginibre} %
  The exchangeable random vector
  \[
    \Bigr(\frac{\boldsymbol{\lambda}_1}{\sqrt{n}},\ldots,\frac{\boldsymbol{\lambda}_n}{\sqrt{n}}\Bigr)
  \]
  is distributed according to the Ginibre gas \eqref{eq:ginibre}. In other
  words $(\boldsymbol{\lambda}_1,\ldots,\boldsymbol{\lambda}_n)$ has density
  $\varphi_n$ as in \eqref{eq:ginibre}.
\end{theorem}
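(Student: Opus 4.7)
The plan is to derive the joint density of the (unordered) eigenvalues via the Schur unitary decomposition, then rescale. Since the density of $\mathbf{M}$ with respect to the Lebesgue measure on $\mathbb{C}^{n\times n}\simeq\mathbb{R}^{2n^2}$ is
\[
  \frac{1}{\pi^{n^2}}\exp\bigl(-\mathrm{tr}(\mathbf{M}^*\mathbf{M})\bigr),
\]
I would first observe that almost every $\mathbf{M}$ has $n$ distinct eigenvalues, so I can write $\mathbf{M}=\mathbf{U}(\mathbf{D}+\mathbf{N})\mathbf{U}^*$ with $\mathbf{U}\in\mathrm{U}(n)$, $\mathbf{D}=\mathrm{diag}(\lambda_1,\ldots,\lambda_n)$, and $\mathbf{N}$ strictly upper triangular. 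This decomposition is unique modulo the action of the diagonal torus $\mathbb{T}^n$ on $\mathbf{U}$ (times permutations of the diagonal of $\mathbf{D}$).

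Next I would perform the change of variables $\mathbf{M}\leftrightarrow(\mathbf{U},\mathbf{D},\mathbf{N})$ on the open dense subset of matrices with simple spectrum, where the key step is the Jacobian calculation. Writing $\mathbf{U}^*\delta\mathbf{M}\,\mathbf{U}=\delta\mathbf{D}+\delta\mathbf{N}+[\mathbf{U}^*\delta\mathbf{U},\mathbf{D}+\mathbf{N}]$ and working entry by entry, the off-diagonal components decompose so that the antisymmetric part $(\mathbf{U}^*\delta\mathbf{U})_{ij}$ (for $i>j$) multiplies $(\lambda_j-\lambda_i)$, producing the Vandermonde-squared factor $\prod_{i<j}|\lambda_i-\lambda_j|^2$. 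After integrating out the $\mathbb{T}^n$-gauge redundancy, this yields
\[
  \mathrm{d}\mathbf{M}
  = \Bigl(\prod_{i<j}|\lambda_i-\lambda_j|^2\Bigr)\,\mathrm{d}\lambda_1\cdots\mathrm{d}\lambda_n\,\mathrm{d}\mathbf{N}\,\mathrm{d}\nu(\mathbf{U}),
\]
where $\mathrm{d}\nu$ is (a constant times) the normalised Haar measure on $\mathrm{U}(n)/\mathbb{T}^n$.

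I would then use the crucial identity $\mathrm{tr}(\mathbf{M}^*\mathbf{M})=\mathrm{tr}((\mathbf{D}+\mathbf{N})^*(\mathbf{D}+\mathbf{N}))=\sum_i|\lambda_i|^2+\sum_{i<j}|N_{ij}|^2$, which factorises the exponential. Integrating out the Gaussian variable $\mathbf{N}$ contributes a constant $\pi^{n(n-1)/2}$, integrating out $\mathbf{U}$ contributes the volume of $\mathrm{U}(n)/\mathbb{T}^n$, and symmetrising over permutations $S_n$ of the labels gives the density of the exchangeable vector $(\boldsymbol{\lambda}_1,\ldots,\boldsymbol{\lambda}_n)$ as
\[
  \frac{C_n}{\pi^{n^2}}\exp\Bigl(-\sum_i|\lambda_i|^2\Bigr)\prod_{i<j}|\lambda_i-\lambda_j|^2.
\]
To compute the normalizing constant, I would expand the Vandermonde via Leibniz and exploit the orthogonality $\int_{\mathbb{C}}z^k\overline{z}^\ell\mathrm{e}^{-|z|^2}\mathrm{d}z=\pi k!\,\delta_{k\ell}$, producing $\prod_{k=1}^n k!$, matching the denominator of $\varphi_n$ in \eqref{eq:ginibre}.

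Finally I would apply the linear scaling $z_i=\lambda_i/\sqrt{n}$, whose Jacobian is $n^n$ (real dimension $2n$, factor $(\sqrt{n})^{2n}=n^n$), which transforms $\exp(-\sum|\lambda_i|^2)$ into $\exp(-n\sum|z_i|^2)$ and leaves the Vandermonde unchanged up to an overall power of $n$ absorbed into normalisation, giving exactly the density of the Ginibre gas as displayed in \eqref{eq:ginibre}. The main obstacle is the Jacobian computation of the Schur change of variables: accounting carefully for the $\mathbb{T}^n\rtimes S_n$ redundancy and for the fact that strictly upper entries of $\mathbf{U}^*\delta\mathbf{U}$ couple to the spectral differences $\lambda_j-\lambda_i$ is the only nontrivial step; the remainder is Gaussian bookkeeping.
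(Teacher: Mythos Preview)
Your proposal is correct and follows essentially the same route as the paper's own argument: Schur decomposition $M=U(D+N)U^*$, the decoupling $\mathrm{Trace}(MM^*)=\mathrm{Trace}(DD^*)+\mathrm{Trace}(NN^*)$, integration of $(N,U)$, and identification of the Vandermonde squared as the Jacobian. You give more detail (the infinitesimal calculation of the Jacobian, the orthogonality computation of the normalizing constant, the final scaling), but the strategy is identical to the paper's sketch.
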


\begin{proof}[Idea of the proof]
  The set of $n\times n$ complex matrices with multiple eigenvalues has zero
  Lebesgue measure. Since the law of $\mathbf{M}$ is absolutely continuous, it
  follows that almost surely $\mathbf{M}$ is diagonalizable with distinct
  eigenvalues. The density is proportional to ($M^*=\overline{M}^\top$ is the
  conjugate-transpose of $M$)
  \[
    M\mapsto\mathrm{e}^{-\sum_{i,j=1}^n|M_{i,j}|^2}=\mathrm{e}^{-\mathrm{Trace}(MM^*)}.
  \]
  In order to compute the law of the spectrum of $\mathbf{M}$, an idea is to
  use for instance the Schur unitary decomposition as a change of variable.
  Namely, if $M$ is diagonalizable, then the Schur decomposition is the matrix
  factorization $M=U(D+N)U^*$ where $U$ is unitary, $D$ is diagonal, and $N$
  is upper triangular with null diagonal (nilpotent). The matrix $D$ carries
  the eigenvalues of $M$. We have the decoupling
  \[
    \mathrm{Trace}(MM^*)=\mathrm{Trace}(DD^*)+\mathrm{Trace}(NN^*).
  \]
  This allows to integrate out $(N,U)$ in the density and to get that the law
  of the eigenvalues of $\mathbf{M}$ is given by \eqref{eq:ginibre}. The term
  $\prod_{i<j}|x_i-x_j|^2$ is the modulus of the determinant of the Jacobian
  of the change of variable. We obtain that for every symmetric bounded (or
  positive) measurable function $F:\mathbb{C}^n\to\mathbb{R}$,
  \[  
    \mathbb{E}[F(\boldsymbol{\lambda}_1,\ldots,\boldsymbol{\lambda}_n)]  
    =\int_{\mathbb{C}^n}\!F(z_1,\ldots,z_n)\varphi_n(z_1,\ldots,z_n)
    \mathrm{d}z_1\cdots\mathrm{d}z_n
  \]
  where $\mathrm{d}z_1\cdots\mathrm{d}z_n$ stands for the Lebesgue measure on
  $\mathbb{C}^n=\mathbb{R}^{2n}$. The result goes back to \cite{MR0173726}.
  The scheme of proof that we follow here can be found in \cite{MR2932638},
  see also \cite[Ch.~15]{MR2129906,MR2641363}.
\end{proof}

\begin{remark}[Immediate properties of Ginibre random matrices]\ %
  \begin{itemize}
  \item Since the law of $\mathbf{M}$ is absolutely continuous, almost surely
    $\mathbf{M}\mathbf{M}^*\neq\mathbf{M}^*\mathbf{M}$ (non-normality);
  \item By the law of large numbers, almost surely, as $n\to\infty$,
    $\frac{1}{\sqrt{n}}\mathbf{M}$ has orthonormal rows/columns;
  \item The law of $\mathbf{M}$ is bi-unitary invariant: if $U$ and $V$ are
    unitary then $U\mathbf{M}V$ and $\mathbf{M}$ have same law;
  \item The Hermitian random matrices
    $\frac{1}{\sqrt{2}}(\mathbf{M}+\mathbf{M}^*)$ and
    $\frac{1}{\sqrt{2}\mathrm{i}}(\mathbf{M}-\mathbf{M}^*)$, the matrix real
    and imaginary parts of $\mathbf{M}$, are independent and belong to the
    Gaussian Unitary Ensemble (GUE): their density is proportional to
    $H\mapsto\mathrm{e}^{-\frac{1}{2}\mathrm{Trace}(H^2)}$; Conversely, if
    $\mathbf{H}_1$ and $\mathbf{H}_2$ are independent copies of the Gaussian
    Unitary Ensemble then the random matrices
    $\frac{1}{\sqrt{2}}(\mathbf{H}_1+\mathrm{i}\mathbf{H}_2)$ and $\mathbf{M}$
    have same law.
  \end{itemize}
\end{remark}

The exact solvability of the Ginibre gas \eqref{eq:ginibre} is largely due to
a determinantal structure studied below, itself related to the fact that
$\beta=2$ and $W=g$. More precisely, first of all, from \eqref{eq:ginibre}
we have
\begin{equation}\label{eq:vphin}
  \varphi_n(z_1,\ldots,z_n)%
  =\frac{\prod_{k=1}^n\gamma(z_k)}{\prod_{k=1}^nk!}\prod_{i<j}|z_i-z_j|^2
\end{equation}
where $\gamma$ is the density of $\mathcal{N}(0,\frac{1}{2}I_2)$ given for all
$z\in\mathbb{C}$ by
\[
	\gamma(z)=\frac{\mathrm{e}^{-|z|^2}}{\pi}.
\]
  
\begin{theorem}[Determinantal structure and
  marginals]\label{th:ginibre-k-points} For all $n\geq1$ and $(z_1,\ldots,z_n)\in\mathbb{C}^n$,
  \[
    \varphi_n(z_1,\ldots,z_n) %
    =\frac{1}{n!}\det\left[K_n(z_i,z_j)\right]_{1\leq i,j\leq n}
  \]
  where the kernel $K_n$ is given for all $z,w\in\mathbb{C}$ by
  \[  
    K_n(z,w)  
    =\sqrt{\gamma(z)\gamma(w)}\sum_{\ell=0}^{n-1}\frac{(z\overline{w})^\ell}{\ell!}.
  \]  
  More generally, for all $1\leq k\leq n$, the marginal density
  \[
    (z_1,\ldots,z_k)\in\mathbb{C}^k\mapsto
    \varphi_{n,k}(z_1,\ldots,z_k) =  
    \int_{\mathbb{C}^{n-k}}\varphi_n(z_1,\ldots,z_n)\mathrm{d}z_{k+1}\cdots\mathrm{d}z_n  
  \]  
  satisfies, for all $(z_1,\ldots,z_k)\in\mathbb{C}^k$,
  \[  
    \varphi_{n,k}(z_1,\ldots,z_k) =   
    \frac{(n-k)!}{n!}\det\left[K_n(z_i,z_j)\right]_{1\leq i,j\leq k}.
  \]
  In particular for $k=n$ we get $\varphi_{n,n}=\varphi_n$, while for $k=1$ we
  get, for all $z\in\mathbb{C}$,
  \[
    \varphi_{n,1}(z)=\frac{\gamma(z)}{n}\sum_{\ell=0}^{n-1}\frac{|z|^{2\ell}}{\ell!}.
  \]
\end{theorem}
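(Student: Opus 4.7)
The plan is to exhibit the Gaussian-weighted monomials
\[
  \phi_\ell(z)=\frac{z^\ell}{\sqrt{\ell!}}\sqrt{\gamma(z)},\qquad \ell=0,\ldots,n-1,
\]
as an orthonormal family in $\mathrm{L}^2(\mathbb{C},\mathrm{d}z)$, so that $K_n(z,w)=\sum_{\ell=0}^{n-1}\phi_\ell(z)\overline{\phi_\ell(w)}$ is the reproducing kernel of the orthogonal projection onto their linear span. Orthonormality reduces to the standard polar-coordinate computation $\int_{\mathbb{C}}z^\ell\overline{z^m}\gamma(z)\,\mathrm{d}z=\ell!\,\delta_{\ell m}$, which I would carry out once as a preliminary step.

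For the first identity I would rely on Vandermonde bookkeeping. Let $\widetilde V\in M_n(\mathbb{C})$ have entries $\widetilde V_{i,\ell}=z_i^{\ell}/\sqrt{\ell!}$ for $\ell=0,\ldots,n-1$. The usual Vandermonde formula gives $|\det\widetilde V|^2=(\prod_{\ell=0}^{n-1}\ell!)^{-1}\prod_{i<j}|z_i-z_j|^2$, and direct multiplication gives $(\widetilde V\widetilde V^{*})_{ij}=\sum_{\ell=0}^{n-1}(z_i\overline{z_j})^{\ell}/\ell!$. Factoring $\sqrt{\gamma(z_i)}$ out of row $i$ and $\sqrt{\gamma(z_j)}$ out of column $j$ in $\det[K_n(z_i,z_j)]$ turns it into $\prod_i\gamma(z_i)\cdot|\det\widetilde V|^2$; comparing with \eqref{eq:vphin} and using the elementary identity $\prod_{k=1}^n k!=n!\prod_{\ell=0}^{n-1}\ell!$ yields $\varphi_n=\frac{1}{n!}\det[K_n(z_i,z_j)]$.

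For the marginal formulas, the heart of the argument is the Gaudin\,--\,Mehta integration identity
\[
  \int_{\mathbb{C}}\det[K_n(z_i,z_j)]_{i,j=1}^{k}\,\mathrm{d}z_k
  =(n-k+1)\det[K_n(z_i,z_j)]_{i,j=1}^{k-1},
\]
valid for any kernel satisfying the trace formula $\int K_n(z,z)\,\mathrm{d}z=n$ and the reproducing property $\int K_n(z,y)K_n(y,w)\,\mathrm{d}y=K_n(z,w)$; both follow immediately from the orthonormality of the $\phi_\ell$'s. I would prove this lemma by expanding the size-$k$ determinant as $\sum_{\sigma\in S_k}\mathrm{sgn}(\sigma)\prod_i K_n(z_i,z_{\sigma(i)})$ and then integrating $z_k$ out: permutations fixing $k$ produce a factor $n$ times $\det[K_n]_{k-1}$ via the trace formula, while for each $\sigma$ with $\sigma(k)\neq k$ the reproducing property contracts the cycle through $k$ to a cycle one step shorter, flipping its parity; the $k-1$ permutations of $S_k$ mapping to a given $\widetilde\sigma\in S_{k-1}$ together contribute $-(k-1)\det[K_n]_{k-1}$. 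Summing gives the claimed factor $n-k+1$. Iterating from $k=n$ down to $k+1$ picks up a total factor $(n-k)!$, whence $\varphi_{n,k}=\frac{(n-k)!}{n!}\det[K_n(z_i,z_j)]_{i,j=1}^k$, and the $k=1$ case follows by unpacking $K_n(z,z)=\gamma(z)\sum_{\ell=0}^{n-1}|z|^{2\ell}/\ell!$ and dividing by $n$.

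The main obstacle is the combinatorial sign accounting in the Gaudin\,--\,Mehta reduction: one must verify carefully that contracting the cycle containing $k$ indeed reverses its parity and that precisely $k-1$ permutations in $S_k$ project onto each target in $S_{k-1}$. Once this step is in place, the rest of the proof is routine Vandermonde algebra together with the single orthogonality computation.
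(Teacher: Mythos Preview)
Your proof is correct and follows essentially the same route as the paper: the Vandermonde factorization to obtain $\varphi_n=\frac{1}{n!}\det[K_n(z_i,z_j)]$, the orthogonality of the monomials $z^\ell/\sqrt{\ell!}$ in $\mathrm{L}^2(\mathbb{C},\gamma)$ to get the trace and reproducing identities, and then the Gaudin\,--\,Mehta integration to reach the marginals. The paper merely asserts that the marginal formula ``follows by expanding the determinant and using these identities'', whereas you spell out the cycle-contraction combinatorics; your sign and multiplicity accounting is correct, so your version is simply a more detailed execution of the same argument.
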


We say that the spectrum of $\mathbf{M}$ is a Gaussian determinantal point
process, see \cite[Ch.~4]{MR2552864}.

The ``$k$-point correlation'' is
$R_{n,k}(z_1,\ldots,z_k)=\frac{n!}{(n-k)!}\varphi_{n,k}(z_1,\ldots,z_k)=\det[K_n(z_i,z_j)]_{1\leq
  i,j\leq k}$.

\begin{proof}[Idea of proof]
  Following for instance \cite[Sec.~5.2 and Ch.~15]{MR2129906}, we get,
  starting with \eqref{eq:vphin},
  \begin{align*}
    \varphi_n(z_1,\ldots,z_n)
    &=\frac{\prod_{k=1}^n\gamma(z_k)}{\prod_{k=1}^nk!}
      \prod_{1\leq i{<}j\leq n}(z_i-z_j)\prod_{1\leq i{<}j\leq n}\overline{(z_i-z_j)}\\
    &=\frac{\prod_{k=1}^n\gamma(x_k)}{n!}
      \det\Bigr[\frac{z_j^{i-1}}{\sqrt{(i-1)!}}\Bigr]_{1\leq i,j\leq n}\det\Bigr[\frac{\overline{z_j}^{i-1}}{\sqrt{(i-1)}!}\Bigr]_{1\leq  
      i,j\leq n}\\
    &=\frac{1}{n!}\det\Bigr[K_n(z_i,z_j)\Bigr]_{1\leq i,j\leq n}.      
  \end{align*}
  On the other hand, the orthogonality of
  $\{\frac{z^\ell}{\sqrt{\ell!}}:0\leq\ell\leq n-1\}$ in
  $\mathrm{L}^2(\mathbb{C},\gamma)$ gives the identities
  \[
    \int_{\mathbb{C}}K_n(x,x)\mathrm{d}x=n
    \quad\text{and}\quad
    \int_{\mathbb{C}}K_n(x,y)K_n(y,z)\mathrm{d}y
    =K_n(x,z),\quad x,z\in\mathbb{C}.
  \]
  Finally the formula for $\varphi_{n,k}$ follows by expanding the determinant
  in $\varphi_n$ and using these identities.
\end{proof}  

\begin{theorem}[Mean circular Law]  
  \label{th:ginibre-circular-weak}
  Let $\boldsymbol{\lambda}_1,\ldots,\boldsymbol{\lambda}_n$ be as in Theorem
  \ref{th:ginibre} and let us define 
  \[
    \mu_n=\frac{1}{n}\sum_{k=1}^n\delta_{\frac{\boldsymbol{\lambda}_k}{\sqrt{n}}}.
  \]
  Let $\mu_\infty$ be the uniform distribution on the unit disc
  $\{z\in\mathbb{C}:|z|\leq1\}$ with density
  $z\in\mathbb{C}\mapsto\frac{\mathbf{1}_{|z|\leq1}}{\pi}$. Then
  \[
    \mathbb{E}\mu_n\weak\mu_\infty.
  \]
\end{theorem}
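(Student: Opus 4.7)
The plan is to read off the density of $\mathbb{E}\mu_n$ from the one-point correlation function provided by Theorem \ref{th:ginibre-k-points}, then identify it with a Poisson tail probability, and finally apply a law of large numbers / Scheffé-type argument.

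First I would observe that by exchangeability, for every $f\in\mathcal{C}_b(\mathbb{C},\mathbb{R})$,
\[
  \int f\,\mathrm{d}\mathbb{E}\mu_n
  =\mathbb{E}\Bigl[\tfrac{1}{n}\sum_{k=1}^n f(\boldsymbol{\lambda}_k/\sqrt n)\Bigr]
  =\int_{\mathbb{C}}f(z/\sqrt n)\,\varphi_{n,1}(z)\,\mathrm{d}z
  =\int_{\mathbb{C}}f(w)\,\rho_n(w)\,\mathrm{d}w,
\]
where, after the change of variable $z=\sqrt n\,w$ (Jacobian $n$) and using the explicit formula for $\varphi_{n,1}$ from Theorem \ref{th:ginibre-k-points},
\[
  \rho_n(w)
  =n\,\varphi_{n,1}(\sqrt n\,w)
  =\frac{\mathrm{e}^{-n|w|^2}}{\pi}\sum_{\ell=0}^{n-1}\frac{(n|w|^2)^\ell}{\ell!}.
\]
So $\mathbb{E}\mu_n$ has density $\rho_n$ with respect to the Lebesgue measure.

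The key observation is that $\rho_n(w)=\frac{1}{\pi}\mathbb{P}(N_{n|w|^2}\leq n-1)$ where $N_\lambda$ denotes a Poisson random variable of intensity $\lambda$. Equivalently, writing $N_{n|w|^2}=Y_1+\cdots+Y_n$ with the $Y_k$ i.i.d.\ Poisson$(|w|^2)$, the (weak) law of large numbers gives $\frac{1}{n}N_{n|w|^2}\to|w|^2$ in probability. Hence, for every $w\in\mathbb{C}$ with $|w|\neq 1$,
\[
  \lim_{n\to\infty}\rho_n(w)
  =\frac{1}{\pi}\lim_{n\to\infty}\mathbb{P}\Bigl(\tfrac{1}{n}N_{n|w|^2}\leq 1-\tfrac{1}{n}\Bigr)
  =\frac{\mathbf{1}_{|w|<1}}{\pi}.
\]
The circle $\{|w|=1\}$ has zero Lebesgue measure so may be ignored.

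To upgrade this pointwise a.e.\ convergence to weak convergence of $\mathbb{E}\mu_n$ towards $\mu_\infty$, I would invoke Scheffé's lemma: both $\rho_n$ and $w\mapsto\pi^{-1}\mathbf{1}_{|w|\leq 1}$ are probability densities on $\mathbb{C}$, and $\rho_n$ converges to the latter Lebesgue-a.e., hence $\int|\rho_n-\pi^{-1}\mathbf{1}_{|w|\leq 1}|\mathrm{d}w\to0$. This $\mathrm{L}^1$ convergence implies convergence in total variation, which is stronger than weak convergence, so in particular $\mathbb{E}\mu_n\weak\mu_\infty$.

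The only slightly delicate point is the identification of $\rho_n$ with the Poisson tail and the resulting sharp cutoff at $|w|=1$; the rest is routine. An alternative, direct route would bypass Scheffé by testing against a continuous bounded $f$ and splitting the integral into $\{|w|\leq 1-\varepsilon\}$, $\{1-\varepsilon<|w|<1+\varepsilon\}$, and $\{|w|\geq 1+\varepsilon\}$, using the Poisson large-deviations bound $\mathbb{P}(N_{n|w|^2}\leq n-1)\leq\exp(-n\,I(|w|^2))$ with $I(r)=r-1-\log r$ on the outer region to ensure tightness, which would control tails uniformly in $n$.
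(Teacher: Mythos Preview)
Your argument is correct. The identification of $\rho_n$ with a Poisson tail, the pointwise limit via the law of large numbers, and the upgrade to weak convergence via Scheff\'e's lemma are all sound.

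The paper proceeds differently on both steps. For the pointwise limit it proves an explicit quantitative bound on the truncated exponential series (Lemma~\ref{le:exp}), obtained from Stirling, yielding uniform convergence of $\rho_n$ to $\pi^{-1}\mathbf{1}_{|z|\leq1}$ on compact subsets of $\{|z|\neq1\}$; your Poisson interpretation appears only afterwards, as a remark (Remark~\ref{rk:poi}). For the passage to weak convergence the paper invokes dominated convergence rather than Scheff\'e. Your route is shorter and more self-contained for this statement: Scheff\'e neatly sidesteps any tail or domination issue, since both $\rho_n$ and the limit are probability densities. The paper's route, on the other hand, buys more than is needed here: the uniform estimates of Lemma~\ref{le:exp} are reused later in the proof of the two-point chaoticity (Theorem~\ref{th:chaos}), where one must control the cross term $\mathrm{e}^{-n(|z_1|^2+|z_2|^2)}|\mathrm{e}_n(nz_1\overline{z}_2)|^2$ and a mere law-of-large-numbers argument would not suffice.
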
  
  
\begin{proof}
  Let $\varphi_{n,1}$ be as in Theorem \ref{th:ginibre-k-points}. For
  all $f\in\mathcal{C}_b(\mathbb{C},\mathbb{R})$, we have, using Theorem
  \ref{th:ginibre},
  \[
    \mathbb{E}\int f\mathrm{d}\mu_n
    =\frac{1}{n}\sum_{k=1}^n\int_{\mathbb{C}^n}f\Bigr(\frac{z_k}{\sqrt{n}}\Bigr)\varphi_n(z_1,\ldots,z_n)\mathrm{d}z_1\cdots\mathrm{d}z_n
    =n\int_{\mathbb{C}}f(z)\varphi_{n,1}(\sqrt{n}z)\mathrm{d}z.
  \]
  Thus $\mathbb{E}\mu_n$ has density
  $n\varphi_{n,1}(\sqrt{n}\bullet)$. By Theorem
  \ref{th:ginibre-k-points} and Lemma \ref{le:exp}, if
  $K\subset\{z\in\mathbb{C}:|z|\neq1\}$ is compact,
  \[
    \lim_{n\to\infty}\sup_{z\in K}
    \Bigr|n\varphi_{n,1}(\sqrt{n}z)-\frac{\mathbf{1}_{|z|\leq1}}{\pi}\Bigr| %
    = \frac{1}{\pi}\lim_{n\to\infty}\sup_{z\in K}
    \Bigr|\mathrm{e}^{-n|z|^2}\mathrm{e}_n(n|z|^2)-\mathbf{1}_{|z|\leq1}\Bigr|=0.
  \]
  It follows then by dominated convergence that $\mathbb{E}\mu_n\weak\mu_\infty$.
\end{proof}

\begin{lemma}[Exponential series]\label{le:exp}
  For every $n\geq1$ and $z\in\mathbb{C}$,
  \[
  |\mathrm{e}_n(nz)-\mathrm{e}^{nz}\mathbf{1}_{|z|\leq1}|\leq r_n(z)
  \]
  where $\mathrm{e}_n(z)=\sum_{\ell=0}^{n-1}\frac{z^\ell}{\ell!}$ is the
  truncated exponential series and
  \[
  r_n(z)=
  \frac{\mathrm{e}^n}{\sqrt{2\pi n}}|z|^n%
  \Bigr(\frac{n+1}{n(1-|z|)+1}\mathbf{1}_{|z|\leq1}%
  +\frac{n}{n(|z|-1)+1}\mathbf{1}_{|z|>1}\Bigr).
  \]
\end{lemma}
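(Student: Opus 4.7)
The plan is to split the analysis according to whether $|z|\leq 1$ (where we bound the tail $|\mathrm{e}^{nz}-\mathrm{e}_n(nz)|$) or $|z|>1$ (where we bound $|\mathrm{e}_n(nz)|$ directly, since the indicator kills the exponential term). In both subcases the strategy is identical: identify the dominant term in the relevant (truncated or tail) exponential series, factor it out, control the remaining sum by a geometric series, and finally convert the factorials into the form $\mathrm{e}^n/\sqrt{2\pi n}$ using Stirling's inequality $n!\geq\sqrt{2\pi n}(n/\mathrm{e})^n$.

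For the case $|z|\leq 1$, I would start from
\[
  \mathrm{e}^{nz}-\mathrm{e}_n(nz)=\sum_{\ell\geq n}\frac{(nz)^\ell}{\ell!},
\]
take absolute values, factor out the first term $(n|z|)^n/n!$, and use the crude bound $(n+1)(n+2)\cdots(n+k)\geq(n+1)^k$ together with $n|z|/(n+1)<1$ to sum a geometric series of common ratio $n|z|/(n+1)$. This produces the factor $(n+1)/(n(1-|z|)+1)$ in front of $(n|z|)^n/n!$, and Stirling then turns $(n|z|)^n/n!$ into at most $\mathrm{e}^n|z|^n/\sqrt{2\pi n}$.

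For the case $|z|>1$, I would instead isolate the \emph{last} term $(n|z|)^{n-1}/(n-1)!$ of $\mathrm{e}_n(nz)$, since for $|z|>1$ the terms of $\sum_{\ell=0}^{n-1}(n|z|)^\ell/\ell!$ are increasing in $\ell$. Writing $\ell=n-1-k$ and using $(n-1)(n-2)\cdots(n-k)\leq(n-1)^k$ reduces the partial sum to a geometric series of common ratio $(n-1)/(n|z|)<1$, which evaluates to $n|z|/(n(|z|-1)+1)$. Multiplying by $(n|z|)^{n-1}/(n-1)!$ and using $1/(n-1)!=n/n!$ rearranges everything into $\frac{n}{n(|z|-1)+1}\cdot(n|z|)^n/n!$, and Stirling again gives the desired bound.

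The one step requiring a little care is choosing the right geometric majorant in each case so that the denominator comes out exactly as $n(1-|z|)+1$ or $n(|z|-1)+1$ rather than something cruder like $1-|z|$ or $|z|-1$; otherwise the estimate would blow up at $|z|=1$ instead of matching the expected $\sqrt n$ scale. Everything else is routine: Stirling is off the shelf, and no complex-analytic subtlety intervenes because $|\mathrm{e}^{nz}|$ and $|\mathrm{e}_n(nz)|$ are dominated term-by-term by their counterparts at $|z|$.
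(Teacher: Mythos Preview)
Your proof is correct and is essentially the same as the paper's: the paper factors out the extremal term in each regime, bounds the remaining sum by a geometric series with ratio $|z|/(n+1)$ (tail case) or $(n-1)/|z|$ (truncated case), and then applies the Stirling bound $\sqrt{2\pi n}\,n^n\leq n!\,\mathrm{e}^n$. The only cosmetic difference is that the paper carries out the estimate for general $z$ and then substitutes $z\leftarrow nz$ at the end, whereas you work with $nz$ from the start.
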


\begin{proof}[Proof of Lemma \ref{le:exp}]
  As in Mehta \cite[Ch.~15]{MR2129906}, for every $n\geq1$, $z\in\mathbb{C}$, if
  $|z|\leq n$ then
  \[
  \Bigr|\mathrm{e}^{z}-\mathrm{e}_n(z)\Bigr|
  =\Bigr|\sum_{\ell=n}^\infty\frac{z^\ell}{\ell!}\Bigr|
  \leq\frac{|z|^n}{n!}\sum_{\ell=0}^\infty\frac{|z|^\ell}{(n+1)^\ell}
  =\frac{|z|^n}{n!}\frac{n+1}{n+1-|z|},
  \]
  while if $|z|>n$ then
  \[
  |\mathrm{e}_n(z)|
  \leq \sum_{\ell=0}^{n-1}\frac{|z|^\ell}{\ell!}
  \leq \frac{|z|^{n-1}}{(n-1)!}\sum_{\ell=0}^{n-1}\frac{(n-1)^{\ell}}{|z|^\ell}
  \leq\frac{|z|^{n-1}}{(n-1)!}\frac{|z|}{|z|-n+1}.
  \]
  Therefore, for every $n\geq1$ and $z\in\mathbb{C}$,
  \[
  |\mathrm{e}_n(nz)-\mathrm{e}^{nz}\mathbf{1}_{|z|\leq1}|
  \leq
  \frac{n^n}{n!}\Bigr(|z|^n\frac{n+1}{n+1-|nz|}\mathbf{1}_{|z|\leq1}
    +|z|^{n-1}\frac{|nz|}{|nz|-n+1}\mathbf{1}_{|z|>1}\Bigr).
  \]
  It remains to use the Stirling bound $\sqrt{2\pi n}n^n\leq n!\mathrm{e}^n$
  to get the first result. 
\end{proof}

\begin{remark}[Probabilistic view]\label{rk:poi}
  There is a probabilistic interpretation of Lemma \ref{le:exp}. For all $z\in\mathbb{C}$,
  \[
    \lim_{n\to\infty}n\varphi_{n,1}(\sqrt{n}z)
    =\frac{1}{\pi}\Bigr(\mathbf{1}_{|z|<1} + \frac{1}{2}  \mathbf{1}_{|z|=1}\Bigr).
  \]
  Namely, by rotational invariance, it suffices to consider the case
  $z = r >0$. Next, if $Y_1, \dots, Y_n$ are independent and identically
  distributed random variables following the Poisson law of mean
  $r^2$, then
  \[
    \mathrm{e}^{-n r^2}\mathrm{e}_n(nr^2) %
    =\mathbb{P}(Y_1+\cdots+Y_n<n) %
    =\mathbb{P}\Bigr(\frac{Y_1+\cdots+Y_n}{n}<1\Bigr).
  \]
  Now $\lim_{n\to\infty}\frac{Y_1 + \dots + Y_n}{n}=r^2$ almost surely by the
  strong law of large numbers, and thus the probability in the right-hand side
  above tends as $n\to\infty$ to $0$ if $r>1$ and to $1$ if $r<1$. In other
  words, for all $r\neq1$,
  \[
    \lim_{n\to\infty}\mathrm{e}^{-n r^2}\mathrm{e}_n(nr^2) = \mathbf{1}_{r < 1}.
  \]
  It remains to note that for $r=1$ by the central limit theorem we get
  \[
    \mathbb{P}\Bigr(\frac{Y_1+\cdots+Y_n}{n}<1\Bigr)
    = \mathbb{P}\Bigr(\frac{Y_1+\cdots+Y_n-n}{\sqrt{n}}<0\Bigr)
    \underset{n\to\infty}{\longrightarrow}\frac{1}{2}.
  \]
\end{remark}

\begin{remark}[Incomplete gamma function]
  It is well known that the Gamma and the Poisson laws are connected. Namely,
  if $X\sim\mathrm{Gamma}(n,\lambda)$ with $n\geq1$ and $\lambda>0$ and
  $Y\sim\mathrm{Poisson}(r)$ with $r>0$ then
  \[
    \mathbb{P}(X\geq\lambda r)
    =\frac{1}{(n-1)!}\int_r^\infty x^{n-1}\mathrm{e}^{-x}\mathrm{d}x
    =\mathrm{e}^{-r}\sum_{\ell=0}^{n-1}\frac{r^\ell}{\ell!}
    =\mathbb{P}(Y\geq n).
  \]
  Also we could use Gamma random variables instead of Poisson random variables
  in Remark \ref{rk:poi}. Note also that the integral in the middle of the
  formula above is the incomplete Gamma function $\Gamma(n,r)$. This allows to
  benefit from the asymptotic analysis of this special function, see
  \cite{MR2932638} and references therein.
\end{remark}


\begin{theorem}[Strong circular law]\label{th:ginibre-circular-strong}
  With the notations of Theorem \ref{th:ginibre-circular-weak}, almost
  surely,
  \[
    \mu_n\weak\mu_\infty.
  \]
\end{theorem}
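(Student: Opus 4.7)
The plan is to upgrade the convergence in expectation from Theorem \ref{th:ginibre-circular-weak} to almost sure weak convergence. Since weak convergence on $\mathcal{M}_1(\mathbb{C})$ is metrized by the bounded-Lipschitz distance, a standard separability argument reduces the task to showing that for each $f$ in a countable dense family of bounded Lipschitz test functions, the linear statistic
\[
L_n(f)=\int f\,\mathrm{d}\mu_n=\frac{1}{n}\sum_{k=1}^n f(\boldsymbol{\lambda}_k/\sqrt n)
\]
converges almost surely to $\int f\,\mathrm{d}\mu_\infty$. Splitting $L_n(f)-\int f\,\mathrm{d}\mu_\infty = (L_n(f)-\mathbb{E} L_n(f))+(\mathbb{E} L_n(f)-\int f\,\mathrm{d}\mu_\infty)$, the second summand vanishes by Theorem \ref{th:ginibre-circular-weak}, so the problem reduces to almost sure vanishing of the fluctuation $L_n(f)-\mathbb{E} L_n(f)$.

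To control the fluctuation I would exploit the determinantal structure of Theorem \ref{th:ginibre-k-points}. Using the projection property $\int|K_n(z,w)|^2\,\mathrm{d}w=K_n(z,z)$ (a direct consequence of the orthonormality of $\{z^{\ell}/\sqrt{\ell!}\}$ in $\mathrm{L}^2(\mathbb{C},\gamma)$ already exploited in the proof of that theorem), together with the determinantal formula for the two-point correlation, a linear statistic of the Ginibre process obeys the compact identity
\[
\mathrm{Var}\Bigl(\sum_{k=1}^n g(\boldsymbol{\lambda}_k)\Bigr)
= \frac{1}{2}\iint(g(z)-g(w))^2|K_n(z,w)|^2\,\mathrm{d}z\,\mathrm{d}w.
\]
Applying this with $g(z)=f(z/\sqrt n)$, rescaling $z=\sqrt n\,u$, $w=\sqrt n\,v$, using $(f(u)-f(v))^2\leq\|f\|_{\mathrm{Lip}}^2|u-v|^2$, and switching to the relative variable $\sqrt n(u-v)$, one should obtain $\mathrm{Var}(\sum_k f(\boldsymbol{\lambda}_k/\sqrt n))=O(1)$, hence $\mathrm{Var}(L_n(f))=O(1/n^2)$. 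The Gaussian factor $\gamma(\sqrt n u)\gamma(\sqrt n v)$ inside $|K_n|^2$ produces Gaussian decay at scale $1/\sqrt n$ off the diagonal, and Lemma \ref{le:exp} supplies the quantitative control of the truncated exponential series needed to make this rigorous up to the boundary $|u|=1$.

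Once the summable variance bound is in hand, Chebyshev yields $\mathbb{P}(|L_n(f)-\mathbb{E} L_n(f)|>\varepsilon)\leq C_f/(n^2\varepsilon^2)$, the Borel-Cantelli lemma gives almost sure vanishing of the fluctuation, and intersecting over the countable family of test functions yields $\mu_n\weak\mu_\infty$ almost surely on a single event of full probability. The main obstacle I anticipate is the quantitative off-diagonal estimate for $|K_n(\sqrt n u,\sqrt n v)|^2$ with sufficient uniformity near the unit circle, where the truncated exponential series transitions between being close to $\mathrm{e}^{n u\bar v}$ and being negligible; Lemma \ref{le:exp} is tailored precisely for this transition. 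A more high-powered shortcut is to invoke Theorem \ref{th:LDP} directly in the Ginibre case ($\beta=2$, $V=\tfrac{1}{2}|\cdot|^2$, equilibrium measure $\mu_\infty$ by Table \ref{tb:mustar}): the $\mathrm{e}^{-C_r n^2}$ upper bound in \eqref{eq:LDPn} is trivially summable, and the Borel-Cantelli argument leading to \eqref{eq:global} then directly delivers the almost sure bounded-Lipschitz convergence, bypassing the determinantal variance estimate altogether.
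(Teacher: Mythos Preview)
Your proposal is correct and follows the same global scaffolding as the paper (Borel\,--\,Cantelli on a countable family of test functions, plus Theorem~\ref{th:ginibre-circular-weak} for the deterministic centering), but the concentration input is genuinely different. The paper, following Silverstein, works with compactly supported bounded $f$ and proves the fourth-moment bound $\mathbb{E}[(S_n-\mathbb{E}S_n)^4]=O(n^{-2})$ by expanding into a sum over distinct indices and invoking the explicit determinantal formulas for $\varphi_{n,3}$ and $\varphi_{n,4}$; summability of the fourth moments then forces $S_n-\mathbb{E}S_n\to0$ almost surely. You instead restrict to Lipschitz $f$, use only the two-point structure via the projection identity $\int|K_n(z,w)|^2\,\mathrm{d}w=K_n(z,z)$ to get the exact variance formula, and obtain $\mathrm{Var}(L_n(f))=O(n^{-2})$ directly from the Lipschitz bound. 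Your route is lighter (second moment instead of fourth, no need for $\varphi_{n,3},\varphi_{n,4}$), at the harmless cost of narrowing the test-function class. One remark: the obstacle you anticipate with Lemma~\ref{le:exp} is illusory here, since $\iint|z-w|^2|K_n(z,w)|^2\,\mathrm{d}z\,\mathrm{d}w$ can be computed exactly (it equals $2n$) via the ladder relation $z\phi_\ell=\sqrt{\ell+1}\,\phi_{\ell+1}$, with no boundary analysis required. Your fallback via Theorem~\ref{th:LDP} is also valid and is precisely the mechanism the paper spells out at~\eqref{eq:LDPn}--\eqref{eq:global}, though the determinantal proof is of course self-contained.
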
  

Note that this convergence holds regardless of the way we define
the random matrices on the same probability space when $n$ varies. This is an
instance of the concept of \emph{complete convergence}, see \cite{MR1632875}.

\begin{proof}[Idea of the proof]
  The argument, due to Jack Silverstein, is in \cite{MR841088}. It is similar
  to the quick proof of the strong law of large numbers for independent random
  variables with bounded fourth moment. It suffices to establish the result
  for an arbitrary compactly supported
  $f\in\mathcal{C}_b(\mathbb{C},\mathbb{R})$. Let us define
  \[  
  S_n=\int_{\mathbb{C}}\!f\,\mathrm{d}\mu_n
  \quad\text{and}\quad  
  S_\infty=\frac{1}{\pi}\int_{|z|\leq 1}\!f(z)\mathrm{d}z.
  \]  
  Suppose for now that we have  
  \begin{equation}\label{eq:mom4}  
    \mathbb{E}[\left(S_n-\mathbb{E} S_n\right)^4]=\mathcal{O}\Bigr(\frac{1}{n^2}\Bigr).  
  \end{equation}  
  By monotone convergence or by the Fubini\,--\,Tonelli theorem,  
  \[  
  \mathbb{E}\sum_{n=1}^\infty\left(S_n-\mathbb{E} S_n\right)^4%
  =\sum_{n=1}^\infty\mathbb{E}[\left(S_n-\mathbb{E} S_n\right)^4]<\infty  
  \]  
  and thus $\sum_{n=1}^\infty\left(S_n-\mathbb{E} S_n\right)^4<\infty$ almost
  surely, which implies $\lim_{n\to\infty}S_n-\mathbb{E} S_n=0$ almost surely.
  Since $\lim_{n\to\infty}\mathbb{E} S_n=S_\infty$ by Theorem
  \ref{th:ginibre-circular-weak}, we get that almost surely
  \[  
  \lim_{n\to\infty}S_n=S_\infty.  
  \]  
  Finally, one can swap the universal quantifiers on $\omega$ and $f$ thanks  
  to the separability of the set of compactly supported continuous bounded  
  functions $\mathbb{C}\to\mathbb{R}$ equipped with the supremum norm. To establish  
  the fourth moment bound \eqref{eq:mom4}, we set  
  \[  
  S_n-\mathbb{E} S_n=\frac{1}{n}\sum_{k=1}^nZ_k %
  \quad\text{with}\quad %
  Z_k=
  f\Bigr(\frac{\boldsymbol{\lambda}_k}{\sqrt{n}}\Bigr)
  -\mathbb{E}f\Bigr(\frac{\boldsymbol{\lambda}_k}{\sqrt{n}}\Bigr). 
  \]  
  Next, we obtain, with $\sum_{k_1,\ldots}$ running over distinct indices in  
  $1,\ldots,n$,  
  \begin{align*}  
    \mathbb{E}\left[\left(S_n-\mathbb{E} S_n\right)^4\right] 
    &=\frac{1}{n^4}\sum_{k_1}\mathbb{E}[Z_{k_1}^4] \\  
    &\quad +\frac{4}{n^4}\sum_{k_1,k_2}\mathbb{E}[Z_{k_1}Z_{k_2}^3] \\  
    &\quad +\frac{3}{n^4}\sum_{k_1,k_2}\mathbb{E}[Z_{k_1}^2Z_{k_2}^2] \\  
    &\quad +\frac{6}{n^4}\sum_{k_1,k_2,k_3}\mathbb{E}[Z_{k_1}Z_{k_2}Z_{k_3}^2] \\  
    &\quad +\frac{1}{n^4}\sum_{k_1,k_2,k_3,k_3,k_4}\!\!\!\!\!\!\mathbb{E}[Z_{k_1}Z_{k_3}Z_{k_3}Z_{k_4}].  
  \end{align*}  
  The first three terms of the right are $\mathcal{O}(n^{-2})$ since
  $\max_{1\leq k\leq n}|Z_k|\leq\Vert f\Vert_\infty$. The expressions of
  $\varphi_{n,3}$ and $\varphi_{n,4}$ from Theorem \ref{th:ginibre-k-points}
  allow to show that the remaining two terms are also $\mathcal{O}(n^{-2})$,
  see \cite{MR841088}. 
\end{proof}

The following theorem includes Theorem \ref{th:ginibre-circular-weak},
which corresponds to the case $k=1$.

\begin{theorem}[Chaoticity]\label{th:chaos}
  Let $\mu_\infty$ be the uniform distribution on the unit disc
  $\{z\in\mathbb{C}:|z|\leq1\}$. For all $1\leq k\leq n$, denoting by
  $P_{n,k}$ the $k$-dimensional marginal distribution of the Ginibre gas
  \eqref{eq:ginibre}, we have
  \[
    P_{n,k}\weak\mu_\infty^{\otimes k}.
  \]
\end{theorem}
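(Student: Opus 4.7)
The plan is to exploit the determinantal structure of Theorem~\ref{th:ginibre-k-points} to write the rescaled density of $P_{n,k}$ as a $k\times k$ determinant of kernel values, to establish its pointwise almost everywhere convergence to the product density of $\mu_\infty^{\otimes k}$, and finally to invoke Scheff\'e's lemma to upgrade this pointwise statement to $L^1$ convergence of densities, which implies convergence in total variation and a fortiori the claimed weak convergence.

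Concretely, by the rescaling $z\mapsto z/\sqrt{n}$ applied to \eqref{eq:ginibre} together with Theorem~\ref{th:ginibre-k-points}, the density of $P_{n,k}$ with respect to Lebesgue measure on $\mathbb{C}^k$ equals
\[
f_{n,k}(z_1,\ldots,z_k)=\frac{n^k(n-k)!}{n!}\det[K_n(\sqrt{n}z_i,\sqrt{n}z_j)]_{1\leq i,j\leq k},
\]
and the prefactor $\prod_{j=0}^{k-1}\frac{n}{n-j}$ tends to $1$. The diagonal entries $K_n(\sqrt{n}z_i,\sqrt{n}z_i)=\frac{1}{\pi}\mathrm{e}^{-n|z_i|^2}\mathrm{e}_n(n|z_i|^2)$ converge Lebesgue-a.e.\ to $\frac{\mathbf{1}_{|z_i|\leq1}}{\pi}$ exactly as in the proof of Theorem~\ref{th:ginibre-circular-weak}, using Lemma~\ref{le:exp}. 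For the off-diagonal entries ($i\neq j$, $z_i\neq z_j$), the crude bound $|\mathrm{e}_n(nz\bar w)|\leq\mathrm{e}^{n|z||w|}$ gives $|K_n(\sqrt{n}z,\sqrt{n}w)|\leq\frac{1}{\pi}\mathrm{e}^{-n(|z|-|w|)^2/2}$, which handles $|z_i|\neq|z_j|$; in the case $|z_i|=|z_j|$, either both points lie outside the closed unit disk and Cauchy--Schwarz for the positive-definite reproducing kernel $|K_n|^2\leq K_n(\sqrt{n}z_i,\sqrt{n}z_i)K_n(\sqrt{n}z_j,\sqrt{n}z_j)$ gives convergence to $0$, or both lie strictly inside the unit disk and Lemma~\ref{le:exp} again yields $\mathrm{e}^{-n|z_i|^2}\mathrm{e}_n(nz_i\overline{z_j})\sim\mathrm{e}^{n(z_i\overline{z_j}-|z_i|^2)}$ whose modulus $\mathrm{e}^{-n|z_i|^2(1-\cos(\arg z_i-\arg z_j))}$ decays exponentially. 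Off a Lebesgue null set the determinant therefore tends to its identity-permutation contribution $\prod_{i=1}^k\frac{\mathbf{1}_{|z_i|\leq1}}{\pi}$, and so does $f_{n,k}$.

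Since both $f_{n,k}$ and its pointwise limit are probability densities on $\mathbb{C}^k$, Scheff\'e's lemma yields $\int_{\mathbb{C}^k}\bigl|f_{n,k}-\prod_i\frac{\mathbf{1}_{|z_i|\leq1}}{\pi}\bigr|\mathrm{d}z\to 0$ as $n\to\infty$, hence convergence of $P_{n,k}$ to $\mu_\infty^{\otimes k}$ in total variation, and a fortiori weakly. I expect the main obstacle to be the off-diagonal pointwise decay in the delicate case $|z_i|=|z_j|<1$ with $z_i\neq z_j$: there the Gaussian prefactor no longer dominates the growth of $|\mathrm{e}_n|$, and the required exponential decay of $K_n(\sqrt{n}z_i,\sqrt{n}z_j)$ must be extracted from the angular mismatch $\arg z_i\neq\arg z_j$ via a slightly sharper use of Lemma~\ref{le:exp} than the one needed for the first-marginal statement of Theorem~\ref{th:ginibre-circular-weak}.
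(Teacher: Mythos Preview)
Your argument is correct and in fact slightly stronger than what the paper does, since Scheff\'e's lemma delivers convergence in total variation and not merely weak convergence. The route, however, differs from the paper's in two respects. First, the paper proves the case $k=2$ by hand and then invokes the abstract chaoticity lemma (Lemma~\ref{le:chaos}) to reduce every $k\geq3$ to $k=2$; you instead expand the full $k\times k$ determinant and treat all $k$ simultaneously. Second, the paper passes from locally uniform convergence of densities to weak convergence by dominated convergence, whereas you pass from pointwise almost-everywhere convergence to $L^1$ convergence by Scheff\'e. Your approach is more direct and avoids the auxiliary lemma; the paper's approach singles out the $k=2$ case as the conceptual core and makes explicit the general mechanism linking propagation of chaos to two-point correlations.

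One remark on your ``main obstacle'': the set $\{(z_1,\dots,z_k):|z_i|=|z_j|\text{ for some }i\neq j\}$ has Lebesgue measure zero, so for the Scheff\'e route the case $|z_i|=|z_j|<1$ with $z_i\neq z_j$ never needs to be addressed at all---your crude bound $|K_n(\sqrt{n}z,\sqrt{n}w)|\leq\pi^{-1}\mathrm{e}^{-n(|z|-|w|)^2/2}$ already covers the complement. If you do want that case (e.g.\ to get the locally uniform convergence the paper states), the paper's treatment is cleaner than the angular argument: using the elementary identity $2\Re(z_1\overline{z_2})=|z_1|^2+|z_2|^2-|z_1-z_2|^2$ one obtains directly $\mathrm{e}^{-n(|z_1|^2+|z_2|^2)/2}\bigl|\mathrm{e}^{nz_1\overline{z_2}}\bigr|=\mathrm{e}^{-n|z_1-z_2|^2/2}$, valid for all $z_1,z_2$ with $|z_1\overline{z_2}|<1$, with no case split on the moduli.
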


\begin{proof}[Idea of the proof]
  The measures $P_{n,k}$ and $\mu_\infty$ have densities $\varphi_{n,k}$ and
  $z\in\mathbb{C}\mapsto\varphi_{\infty}(z)=\pi^{-1}\mathbf{1}_{|z|\leq1}$.
  
  The case $k=1$ is nothing else but Theorem \ref{th:ginibre-circular-weak},
  namely $P_{n,1}\weak\mu_\infty$. This comes via dominated convergence from
  the fact that $\lim_{n\to\infty}\varphi_{n,k}=\varphi_\infty$ uniformly on
  compact subsets of $\{z\in\mathbb{C}:|z|\neq 1\}$.

  Let us consider now the case $k=2$. Here again, by dominated convergence, it
  suffices to show that
  \[
    \lim_{n\to\infty}\varphi^{n,2}=\varphi_{\infty}^{\otimes 2}
  \]
  uniformly on compact subsets of
  $\{(z_1,z_2)\in\mathbb{C}^2:|z_1|\neq1,|z_2|\neq1,z_1\neq z_2\}$.

  By Theorem \ref{th:ginibre-k-points}, for all $z_1,z_2\in\mathbb{C}$,
  \begin{align}
  \varphi^{n,2}(z_1,z_2)
  &=
  \frac{n}{n-1}
  \frac{\mathrm{e}^{-n(|z_1|^2+|z_2|^2)}}{\pi^2}
  \big( \mathrm{e}_n(n|z_1|^2)\mathrm{e}_n(n|z_2|^2)-|\mathrm{e}_n(nz_1\overline{z}_2)|^2 \big)\nonumber\\
  &=\frac{n}{n-1}\varphi^{n,1}(z_1)\varphi^{n,1}(z_2)
  -
  \frac{n}{n-1}
  \frac{\mathrm{e}^{-n(|z_1|^2+|z_2|^2)}}{\pi^2}
  |\mathrm{e}_n(nz_1\overline{z}_2)|^2\label{eq:2points}
  \end{align}
  where $\mathrm{e}_n$ is as in Lemma \ref{le:exp}. It follows that for any
  $n\geq2$ and $z_1,z_2\in\mathbb{C}$,
  \begin{align}\label{defDelta}
  \Delta_n(z_1,z_2)
  &= \varphi^{n,2}(z_1,z_2)-\varphi^{n,1}(z_1)\varphi^{n,1}(z_2)\nonumber\\
  &= \frac{1}{n-1}\varphi^{n,1}(z_1)\varphi^{n,1}(z_2) 
  -\frac{n}{n-1}\frac{\mathrm{e}^{-n(|z_1|^2+|z_2|^2)}}{\pi^2}|\mathrm{e}_n(nz_1\overline{z}_2)|^2.
  \end{align}
  In particular, using $\varphi^{n,2}\geq0$ for the lower bound,
  \[
  -\varphi^{n,1}(z_1)\varphi^{n,1}(z_2)
  \leq
  \Delta_n(z_1,z_2)
  \leq
  \frac{1}{n-1}\varphi^{n,1}(z_1)\varphi^{n,1}(z_2).
  \]
  From this and Lemma \ref{le:exp} we first deduce that for any compact subset
  $K$ of $\{z\in\mathbb{C}:|z|>1\}$
  \[
  \lim_{n\to\infty}
  \sup_{\substack{z_1\in\mathbb{C}\\z_2\in K}}|\Delta_n(z_1,z_2)|
  =
  \lim_{n\to\infty}
  \sup_{\substack{z_1\in K\\z_2\in\mathbb{C}}}|\Delta_n(z_1,z_2)|
  =0.
  \]
  It remains to show that $\Delta_n(z_1,z_2)\to0$ as $n\to\infty$ when $z_1$
  and $z_2$ are in compact subsets of
  $\{(z_1,z_2)\in\mathbb{C}^2:|z_1| < 1, |z_2| < 1\}$. In this case
  $|z_1\overline{z}_2|\leq1$, and Lemma \ref{le:exp} gives
  \[
  |\mathrm{e}_n(nz_1\overline{z}_2)|^2
  \leq 2\mathrm{e}^{2n\Re(z_1\overline{z}_2)}
  +
  2r_n^2(z_1\overline{z}_2).
  \]
  Next, using the elementary identity
  $2\Re(z_1\overline{z}_2)=|z_1|^2+|z_2|^2-|z_1-z_2|^2$, we get
  \begin{equation}\label{enrn}
  \mathrm{e}^{-n(|z_1|^2+|z_2|^2)}|
  \mathrm{e}_n(nz_1\overline{z}_2)|^2
  \leq
  2\mathrm{e}^{-n|z_1-z_2|^2}
  +2\mathrm{e}^{-n(|z_1|^2+|z_2|^2)}r_n^2(z_1\overline{z}_2).
  \end{equation}
  Since $|z_1\overline{z}_2|\leq1$, the formula for $r_n$ in Lemma \ref{le:exp} gives
  \[
  \mathrm{e}^{-n(|z_1|^2+|z_2|^2)}r^2_n(z_1\overline{z}_2) \leq
  \mathrm{e}^{-n(|z_1|^2+|z_2|^2-2-\log|z_1|^2-\log|z_2|^2)} \frac{(n+1)^2}{2\pi n}.
  \]
  Using \eqref{defDelta}, \eqref{enrn} and the bounds
  $\varphi^{n,1} \leq \pi^{-1}$ and $u-1 - \log u >0$ for $0<u<1$, it follows
  that $\Delta_n(z_1, z_2)$ tends to $0$ as $n\to\infty$ uniformly in
  $z_1,z_2$ on compact subsets of
  \[
  \{(z_1,z_2)\in\mathbb{C}^2:|z_1|<1,|z_2|<1,z_1\neq z_2\}.
  \]
  This finishes the proof of the case $k=2$. The case $k\geq3$ follows from
  the case $k=2$ by Lemma \ref{le:chaos}.
\end{proof}


\begin{remark}[Impossibility of global uniform convergence of densities]
  The convergence of $\varphi_{n,1}$ cannot hold uniformly on arbitrary
  compact sets of $\mathbb{C}$ since the point-wise limit is not continuous on
  the unit circle. Similarly, the convergence of $\varphi_{n,2}$ cannot hold
  on $\{(z,z):z\in\mathbb{C}, |z|<1\}$ since $\varphi^{n,2}(z,z)=0$ for any
  $n\geq2$ and $z\in\mathbb{C}$ while
  $\varphi_\infty(z)\varphi_\infty(z)=\pi^{-2}\neq0$ when $|z|<1$.
\end{remark}

\begin{lemma}[Chaoticity]\label{le:chaos}
  Let $E$ be a Polish space. For all $n\geq1$, let $P_n\in\mathcal{M}_1(E^n)$
  be exchangeable, and for all $1\leq k\leq n$, let
  $P_{n,k}\in\mathcal{M}_1(E^k)$ be its $k$-dimensional marginal distribution.
  For all $n\geq1$, let us pick a random vector
  $X_n=(X_{n,1},\ldots,X_{n,n})\sim P_n$ and let us define the random
  empirical measure
  \[
    \mu_n= \displaystyle \frac{1}{n} \sum_{i=1}^n \delta_{X_{n,i}}.
  \]
  For all $\mu\in\mathcal{M}_1(E)$, the following properties are equivalent:
  \begin{enumerate}
  \item $\mu_n\weak\delta_{\mu}$ in $\mathcal{M}_1(\mathcal{M}_1(E))$
    (here we see $\mu_n$ a random variable taking values in
    $\mathcal{M}_1(E)$);
  \item $P_{n,k}\weak\mu^{\otimes k}$ for any fixed $k\geq1$ (note that
    $P_{n,k}$ has a meaning as soon as $n\geq k$);
  \item $P_{n,2}\weak\mu^{\otimes 2}$;
  \end{enumerate}
  where these weak convergences are with respect to continuous and bounded test functions.
\end{lemma}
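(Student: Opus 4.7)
The plan is to prove the cycle $(1) \Rightarrow (2) \Rightarrow (3) \Rightarrow (1)$, with $(2) \Rightarrow (3)$ being immediate (specialize to $k=2$), so the actual work is in $(1)\Rightarrow(2)$ and $(3)\Rightarrow(1)$.

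For $(1) \Rightarrow (2)$, I will test $\mu_n$ against cylindrical functions. Fix $k\geq 1$ and $f_1,\ldots,f_k \in \mathcal{C}_b(E,\mathbb{R})$, and consider $F:\mathcal{M}_1(E)\to\mathbb{R}$ defined by $F(\nu)=\prod_{j=1}^k\int f_j\,\mathrm{d}\nu$. Then $F$ is bounded continuous, so (1) yields $\mathbb{E}[F(\mu_n)]\to F(\mu)=\prod_j\int f_j\,\mathrm{d}\mu$. Expanding the product,
\[
  \mathbb{E}[F(\mu_n)]=\frac{1}{n^k}\sum_{i_1,\ldots,i_k=1}^n\mathbb{E}\Bigl[\prod_{j=1}^kf_j(X_{n,i_j})\Bigr].
\]
The contribution of multi-indices with a repetition is $\mathcal{O}(1/n)$ since the $f_j$ are bounded, and the contribution of multi-indices with pairwise distinct entries equals, by exchangeability, $(1-\mathcal{O}(1/n))\int\prod_jf_j(x_j)\,\mathrm{d}P_{n,k}(x_1,\ldots,x_k)$. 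Thus $\int\prod_j f_j\,\mathrm{d}P_{n,k}\to\int\prod_j f_j\,\mathrm{d}\mu^{\otimes k}$. Since tensor products of bounded continuous functions determine weak convergence on $E^k$, this gives (2).

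For $(3)\Rightarrow(1)$, the key is a second moment bound. Fix $f\in\mathcal{C}_b(E,\mathbb{R})$ and compute, using exchangeability,
\[
  \mathbb{E}\Bigl[\Bigl(\int f\,\mathrm{d}\mu_n-\int f\,\mathrm{d}\mu\Bigr)^2\Bigr]
  =\frac{1}{n}\mathbb{E}[f(X_{n,1})^2]+\frac{n-1}{n}\mathbb{E}[f(X_{n,1})f(X_{n,2})]
  -2\mathbb{E}[f(X_{n,1})]\!\int\! f\,\mathrm{d}\mu+\Bigl(\int f\,\mathrm{d}\mu\Bigr)^2.
\]
Applying (3) to the bounded continuous test functions $(x,y)\mapsto f(x)f(y)$ and $(x,y)\mapsto f(x)$, the right-hand side tends to $\bigl(\int f\,\mathrm{d}\mu\bigr)^2-2\bigl(\int f\,\mathrm{d}\mu\bigr)^2+\bigl(\int f\,\mathrm{d}\mu\bigr)^2=0$. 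Hence $\int f\,\mathrm{d}\mu_n\to\int f\,\mathrm{d}\mu$ in $\mathrm{L}^2$, and in particular in probability.

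To conclude (1), I will upgrade this to weak convergence of the random measure $\mu_n$ towards $\mu$ in probability. Since $E$ is Polish, there exists a countable family $(f_k)_{k\geq 1}\subset\mathcal{C}_b(E,\mathbb{R})$ that is convergence-determining for $\mathcal{M}_1(E)$. Applying the previous step to each $f_k$ and taking a diagonal extraction, one gets that along any subsequence, a further subsequence satisfies $\int f_k\,\mathrm{d}\mu_n\to\int f_k\,\mathrm{d}\mu$ almost surely for every $k$, hence $\mu_n\to\mu$ weakly almost surely along that subsubsequence. This shows that $\mu_n\to\mu$ weakly in probability, which is precisely $\mu_n\weak\delta_\mu$ in $\mathcal{M}_1(\mathcal{M}_1(E))$ by testing against bounded continuous functions on $\mathcal{M}_1(E)$ and dominated convergence. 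The main obstacle is this last step, where Polishness of $E$ is crucial to reduce weak convergence of random measures to convergence of countably many linear statistics.
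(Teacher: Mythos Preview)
Your argument is correct and follows the standard route for this folklore result: combinatorial expansion of $\mathbb{E}[\prod_j\int f_j\,\mathrm{d}\mu_n]$ for $(1)\Rightarrow(2)$, and the second-moment computation plus a countable convergence-determining family for $(3)\Rightarrow(1)$. The paper does not give its own proof at all---it simply labels the lemma ``folkloric'' and defers to \cite{MR3847984}---so your proposal supplies precisely the details the paper omits; there is nothing to compare.
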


\begin{proof}
  Folkloric in the domain of mean field particle systems. We refer to
  \cite{MR3847984} and references therein.
\end{proof}

\begin{theorem}[Central limit phenomenon]\label{th:ginibre-clt}
  Let $\mu_n$ be as in Theorem \ref{th:ginibre-circular-weak}. Then, for all
  measurable $f:\mathbb{C}\to\mathbb{R}$ which are $\mathcal{C}^1$ in a
  neighborhood of the unit disc $D=\{z\in\mathbb{C}:|z|\leq1\}$ of the complex
  plane,
  \[
    n\Bigr[\int f\mathrm{d}\mu_n-\mathbb{E}\int f\mathrm{d}\mu_n\Bigr]
    =\sum_{k=1}^n\Bigr[f\Bigr(\frac{\boldsymbol{\lambda}_k}{\sqrt{n}}\Bigr)-\mathbb{E}f\Bigr(\frac{\boldsymbol{\lambda}_k}{\sqrt{n}}\Bigr)\Bigr]
    \overset{\mathrm{d}}{\underset{n\to\infty}{\longrightarrow}}
    \mathcal{N}\Bigr(\frac{1}{4\pi}\|f\|_{\mathrm{H}^1(D)}^2
    +\frac{1}{2}\|f\|_{\mathrm{H}^{1/2}(\partial D)}^2
    \Bigr)
  \]
  where 
  \[
    \|f\|_{\mathrm{H}^1(D)}^2
    =\int_{D}|\nabla f|^2\mathrm{d}z
    \quad\text{and}\quad
    \|f\|_{\mathrm{H}^{1/2}(\partial D)}^2
    =\sum_{k\in\mathbb{Z}}|k||\widehat{f}(k)|^2
  \]
  where $\widehat{f}(k)$ is the $k$-th Fourier coefficient of $f$ on
  $\partial D=\{z\in\mathbb{C}:|z|=1\}$, namely
  \[
    \widehat{f}(k)=\frac{1}{2\pi}\int_0^{2\pi}f(\mathrm{e}^{\mathrm{i}\theta})\mathrm{e}^{-\mathrm{i}k\theta}\mathrm{d}\theta.
  \]
\end{theorem}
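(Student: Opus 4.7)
The plan is to exploit the determinantal structure of the Ginibre gas (Theorem \ref{th:ginibre-k-points}) via the cumulants of the linear statistic $L_n(f) = \sum_{k=1}^n f(\boldsymbol{\lambda}_k/\sqrt{n})$. For a determinantal point process with Hermitian kernel, the Laplace transform of $L_n(f)$ factors as a Fredholm determinant $\mathbb{E}[\mathrm{e}^{tL_n(f)}] = \det(I + (\mathrm{e}^{tf_n}-1)\mathcal{K}_n)$, where $f_n(z)=f(z/\sqrt n)$ and $\mathcal{K}_n$ is the integral operator with kernel $K_n$. Expanding the logarithm of this determinant produces an explicit series for the cumulants of $L_n(f)$, reducing the proof to: (a) computing the limiting variance and identifying it with the stated $\mathrm{H}^1(D) + \mathrm{H}^{1/2}(\partial D)$ quadratic form, and (b) showing that cumulants of order $\geq 3$ vanish in the limit.

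First I would compute the variance. The second cumulant admits the classical determinantal representation
\[
  \mathrm{Var}(L_n(f)) = \frac{1}{2}\iint \bigl|f(z/\sqrt n)-f(w/\sqrt n)\bigr|^2 |K_n(z,w)|^2 \,\mathrm{d}z\,\mathrm{d}w,
\]
obtained from the reproducing property $\int K_n(x,y)K_n(y,z)\,\mathrm{d}y = K_n(x,z)$ established in the proof of Theorem \ref{th:ginibre-k-points}. After rescaling $z\mapsto\sqrt n\,z$, $w\mapsto\sqrt n\,w$, the kernel $n^{-1}|K_n(\sqrt n z,\sqrt n w)|^2$ concentrates along the diagonal inside the unit disc on a microscopic scale of order $1/\sqrt n$, with a universal Gaussian profile coming from the generating function of the exponential weights $(|z|^{2\ell}/\ell!)$. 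A Taylor expansion $f(z)-f(w) \approx \nabla f(z)\cdot(z-w)$ on this scale, combined with a Gaussian integration in the transverse variable, produces the bulk contribution $\frac{1}{4\pi}\int_D|\nabla f|^2\,\mathrm{d}z$. The boundary contribution is more delicate: when $|z|$ or $|w|$ crosses the unit circle, the kernel transitions through a complementary error function profile, and integrating over this edge layer produces the $\mathrm{H}^{1/2}(\partial D)$ seminorm via the standard harmonic-extension / Dirichlet-to-Neumann identification $\|f\|_{\mathrm{H}^{1/2}(\partial D)}^2 = \sum_{k\in\mathbb{Z}}|k||\widehat f(k)|^2$.

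Next I would show that all cumulants of order $\geq 3$ tend to zero. The $k$-th cumulant of $L_n(f)$ is an alternating combination of cyclic traces $\int f_n(z_1)\cdots f_n(z_k)\,K_n(z_1,z_2)K_n(z_2,z_3)\cdots K_n(z_k,z_1)\,\mathrm{d}z_1\cdots\mathrm{d}z_k$, i.e.\ traces of powers of the operator $M_{f_n}\mathcal{K}_n$. Using either the explicit eigenbasis $\{z^\ell/\sqrt{\ell!}\}$ of $\mathcal{K}_n$ or the microscopic-scale kernel estimates from the variance step, each such trace is $O(1)$, but the combinatorial identity defining the $k$-th cumulant produces cancellations leaving a remainder of order $O(n^{2-k})$ for $k\geq 3$. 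Vanishing of higher cumulants together with the variance asymptotics yields the Gaussian limit by the method of cumulants (equivalently, pointwise convergence of the log-Laplace transform on a neighborhood of the origin).

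The main obstacle is the variance asymptotics, specifically the identification of the $\mathrm{H}^{1/2}(\partial D)$ boundary term. The bulk contribution inside $D$ is a standard determinantal-CLT computation, but the crossover of $K_n$ across the unit circle requires sharper estimates than Lemma \ref{le:exp} provides: one needs the precise error-function shape of the edge profile in order to recover the Fourier-coefficient sum $\sum_k |k||\widehat f(k)|^2$ rather than some coarser quantity. The detailed computation follows the blueprint of \cite{MR2361453,MR3342661,MR2817648}, where exactly this analysis is carried out for the Ginibre ensemble.
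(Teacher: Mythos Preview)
Your proposal follows essentially the same approach as the paper's sketch: exploit the determinantal structure to obtain closed formulas for the cumulants of the linear statistic, show the variance converges to the stated $\mathrm{H}^1(D)+\mathrm{H}^{1/2}(\partial D)$ quadratic form, show higher cumulants vanish, and conclude by the method of cumulants, citing \cite{MR2361453,MR2817648,MR3342661}. The only noteworthy difference is one of emphasis: the paper highlights that Rider--Vir\'ag \cite{MR2361453} proceed \emph{via reduction to polynomial test functions} (where the orthogonal basis $\{z^\ell/\sqrt{\ell!}\}$ makes the traces and their combinatorics fully explicit) and then extend by approximation, whereas you propose to attack the variance and higher cumulants directly through microscopic kernel asymptotics in the style of \cite{MR2817648,MR3342661}; both routes are valid and both are among the references the paper points to. One small caution: your blanket claim that the $k$-th cumulant is $O(n^{2-k})$ is sharper than what the argument actually needs or delivers in general --- what matters (and what the cited works establish) is that cumulants of order $\geq 3$ tend to zero for sufficiently regular $f$, and the polynomial-reduction strategy sidesteps having to prove a uniform rate.
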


Note that $\|f\|_{\mathrm{H}^{1/2}(\partial D)}=0$ if $f$ is analytic on a
neighborhood of $\partial D$.


\begin{proof}[About the proof]
  It is known \cite{MR3155254} that the cumulants of linear statistics of
  determinantal processes have a nice form that can be used to prove a central
  limit theorem. This idea is followed in \cite{MR2361453} in order to produce
  the result, via combinatorial identities, and via reduction to polynomial
  test functions. The method is used for more general two-dimensional
  determinantal gases in \cite{MR2817648,MR3342661}.
\end{proof}

\begin{theorem}[Distribution of the moduli in the Ginibre
  model]\label{th:kostlan}
  Let $(\boldsymbol{\lambda}_1,\ldots,\boldsymbol{\lambda}_n)$ be the
  exchangeable random vector considered in Theorem \ref{th:ginibre}. Then the
  following equality in distribution holds
  \[
    (|\boldsymbol{\lambda}_1|,\ldots,|\boldsymbol{\lambda}_n|)
    \overset{\mathrm{d}}{=}
    (Z_{\sigma(1)},\ldots,Z_{\sigma(n)})
  \]
  where $Z_1,\ldots,Z_n$ are independent non-negative random variables
  with\footnote{The law $\mathrm{Gamma}(a,\lambda)$ has density
	$x\in\mathbb{R}\mapsto \frac{\lambda^a}{\Gamma(a)}x^{a-1}\mathrm{e}^{-\lambda
      x}\mathbf{1}_{x\geq0}$, and
    $\mathrm{Gamma}(a,\lambda)*\mathrm{Gamma}(b,\lambda)=\mathrm{Gamma}(a+b,\lambda)$.}\footnote{Note
    that
    $(\sqrt{2}Z_k)^2\sim\mathrm{Gamma}(k,\frac{1}{2})=\mathrm{Exponential}(\frac{1}{2})^{*k}=\chi^2(2k)$
    since $\chi^2(n)=\mathrm{Gamma}(\frac{n}{2},\frac{1}{2})$ for all
    $n\geq1$.}\footnote{For $n=1$ we recover the Box\,--\,Muller formula
    $|X|^2\sim\chi^2(2)=\mathrm{Gamma}(1,\frac{1}{2})=\mathrm{Exponential}(\frac{1}{2})$
    with $X\sim\mathcal{N}(0,I_2)$.}
  \[
    Z_k^2\sim\mathrm{Gamma}(k,1),\quad 1\leq k\leq n,
  \]
  and where $\sigma$ is a uniform random permutation of $\{1,\ldots,n\}$
  independent of $Z_1,\ldots,Z_n$. Equivalently, for all symmetric bounded
  measurable $F:\mathbb{R}^n\to\mathbb{R}$, we have
  \( \mathbb{E}(F(|\boldsymbol{\lambda}_1|,\ldots,|\boldsymbol{\lambda}_n|))
  =\mathbb{E}(F(Z_1,\ldots,Z_n)). \)
\end{theorem}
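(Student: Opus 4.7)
Since the vector $(|\boldsymbol{\lambda}_1|,\ldots,|\boldsymbol{\lambda}_n|)$ is exchangeable and the right-hand side $(Z_{\sigma(1)},\ldots,Z_{\sigma(n)})$ is exchangeable by construction, it suffices to verify that for every symmetric bounded measurable $F:\mathbb{R}_+^n\to\mathbb{R}$,
\[
\mathbb{E}\bigl[F(|\boldsymbol{\lambda}_1|,\ldots,|\boldsymbol{\lambda}_n|)\bigr]
=\mathbb{E}\bigl[F(Z_1,\ldots,Z_n)\bigr].
\]
The plan is to compute the left-hand side by passing to polar coordinates $z_k=r_k\mathrm{e}^{\mathrm{i}\theta_k}$ in the joint density $\varphi_n$ from Theorem \ref{th:ginibre} and integrating out the angles, then recognize the symmetrized density that appears as that of a vector of independent square roots of Gamma variables.

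The key ingredient is the Vandermonde identity
\[
\prod_{i<j}|z_i-z_j|^2
=\Bigl|\det\bigl[z_i^{j-1}\bigr]_{1\leq i,j\leq n}\Bigr|^2
=\sum_{\sigma,\tau\in S_n}\mathrm{sgn}(\sigma\tau)
\prod_{i=1}^n z_i^{\sigma(i)-1}\overline{z_i}^{\tau(i)-1}.
\]
In polar form each term becomes $r_i^{\sigma(i)+\tau(i)-2}\mathrm{e}^{\mathrm{i}(\sigma(i)-\tau(i))\theta_i}$, and the orthogonality relation $\int_0^{2\pi}\mathrm{e}^{\mathrm{i}k\theta}\mathrm{d}\theta=2\pi\,\delta_{k,0}$ kills every term with $\sigma\neq\tau$. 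Accounting for the Jacobian $\prod_k r_k$ of the polar change of variables, the joint density of $(|\boldsymbol{\lambda}_1|,\ldots,|\boldsymbol{\lambda}_n|)$ on $\mathbb{R}_+^n$ reads
\[
f(r_1,\ldots,r_n)
=\frac{2^n\mathrm{e}^{-\sum_k r_k^2}}{\prod_{k=1}^n k!}\,
\prod_{k=1}^n r_k\,
\sum_{\sigma\in S_n}\prod_{i=1}^n r_i^{2(\sigma(i)-1)}.
\]

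Now testing against a symmetric $F$, every one of the $n!$ permutations in the sum contributes the same integral as the identity permutation, so
\[
\mathbb{E}[F(|\boldsymbol{\lambda}_1|,\ldots,|\boldsymbol{\lambda}_n|)]
=\int F(r)\,\frac{n!\,2^n}{\prod_{k=1}^n k!}\,\mathrm{e}^{-\sum_k r_k^2}\prod_{k=1}^n r_k^{2k-1}\,\mathrm{d}r.
\]
Using $n!/\prod_{k=1}^n k!=1/\prod_{k=1}^n(k-1)!$, the integrand factorizes as $\prod_{k=1}^n\frac{2}{(k-1)!}r_k^{2k-1}\mathrm{e}^{-r_k^2}$, which is precisely the product of the densities of the independent variables $Z_k$ characterized by $Z_k^2\sim\mathrm{Gamma}(k,1)$ (since $Z_k=\sqrt{Y_k}$ with $Y_k$ of density $\frac{1}{(k-1)!}y^{k-1}\mathrm{e}^{-y}\mathbf{1}_{y>0}$ has density $\frac{2}{(k-1)!}r^{2k-1}\mathrm{e}^{-r^2}\mathbf{1}_{r>0}$). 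This is exactly $\mathbb{E}[F(Z_1,\ldots,Z_n)]$, which establishes the claimed equality in distribution after randomizing with an independent uniform permutation.

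The only nontrivial step is the angular integration; the main obstacle is to carry out the bookkeeping of permutations and factorials cleanly. Everything else reduces to symmetry of $F$ and a Gamma-density recognition, so no analytic subtlety is involved, only the combinatorial collapse provided by the $\delta_{\sigma,\tau}$ arising from orthogonality of characters on the circle.
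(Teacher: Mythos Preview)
Your proof is correct and follows essentially the same route as the paper: polar coordinates, the Vandermonde expansion as a double sum over permutations, orthogonality of $\mathrm{e}^{\mathrm{i}k\theta}$ on the circle to collapse to the diagonal $\sigma=\tau$, and recognition of the resulting symmetrized product as the law of independent $Z_k$ with $Z_k^2\sim\mathrm{Gamma}(k,1)$. Your bookkeeping of the Jacobian $\prod_k r_k$ and the factorial identity $n!/\prod_{k=1}^n k!=1/\prod_{k=1}^n(k-1)!$ is in fact tidier than the paper's presentation, which phrases the outcome via a permanent.
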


This is an equality between two exchangeable laws on $\mathbb{R}^n$, in other
words an equality in law between two configurations of unlabeled
random points in $\mathbb{R}$ (multi-sets). Note in particular that for all
$1\leq k\leq n$, taking
$F(x_1,\ldots,x_n)=\sum_{\substack{i_1,\ldots,i_k\\\text{
      distinct}}}f(x_{i_1})\cdots f(x_{i_k})$ gives equality of $k$-point
correlation functions.


\begin{proof}
  From Theorem \ref{th:ginibre}, the exchangeable random vector
  $(\boldsymbol{\lambda}_1,\ldots,\boldsymbol{\lambda}_n)$ has density
  $\varphi_n$. It follows that the density of the exchangeable random vector
  $(|\boldsymbol{\lambda}_1|,\ldots,|\boldsymbol{\lambda}_n|)$ is obtained from
  $\varphi_n$ by integrating the phases in polar coordinates. In polar
  coordinates $x_k=r_k\mathrm{e}^{\mathrm{i}\theta_k}$, the density
  $\varphi_n$ writes
  \[
    (r_1,\ldots,r_n,\theta_1,\ldots,\theta_n)
    \mapsto
    \frac{\mathrm{e}^{-\sum_{j=1}^nr_j^2}}{\pi^n\prod_{k=1}^nk!}
    \prod_{j<k}|r_j\mathrm{e}^{\mathrm{i}\theta_j}-r_k\mathrm{e}^{\mathrm{i}\theta_k}|^2.   
  \]
  Now we have, denoting $\Sigma_n$ the symmetric group of permutations of $\{1,\ldots,n\}$,
  \begin{align*}
    \prod_{j<k}|r_j\mathrm{e}^{\mathrm{i}\theta_j}-r_k\mathrm{e}^{\mathrm{i}\theta_k}|^2
    &=
    \prod_{j<k}(r_j\mathrm{e}^{\mathrm{i}\theta_j}-r_k\mathrm{e}^{\mathrm{i}\theta_k})
    \prod_{j<k}\overline{(r_j\mathrm{e}^{\mathrm{i}\theta_j}-r_k\mathrm{e}^{\mathrm{i}\theta_k})}\\
    &=
    \det\Bigr[r_j^{k-1}\mathrm{e}^{\mathrm{i}(k-1)\theta_j}\Bigr]_{1\leq j,k\leq n}
    \det\Bigr[r_j^{k-1}\mathrm{e}^{-\mathrm{i}(k-1)\theta_j}\Bigr]_{1\leq
      j,k\leq n}\\
    &=\Bigr(\sum_{\sigma\in\Sigma_n}(-1)^{\mathrm{sign}(\sigma)}\prod_{j=1}^nr_j^{\sigma(j)-1}\mathrm{e}^{\mathrm{i}(\sigma(j)-1)\theta_j}\Bigr)
      \Bigr(\sum_{\sigma'\in\Sigma_n}(-1)^{\mathrm{sign}(\sigma')}\prod_{j=1}^nr_{j}^{\sigma'(j)-1}\mathrm{e}^{-\mathrm{i}(\sigma'(j)-1)\theta_{j}}\Bigr)\\
    &=\sum_{\sigma,\sigma'\in\Sigma_n}(-1)^{\mathrm{sign}(\sigma)+\mathrm{sign}(\sigma')}\prod_{j=1}^nr_j^{\sigma(j)+\sigma'(j)-2}\mathrm{e}^{\mathrm{i}((\sigma(j)-\sigma'(j))\theta_{j})}.
  \end{align*}
  If we integrate the phases, we note that only the terms with
  $\sigma=\sigma'$ contribute to the result, namely
  \[
    \int_0^{2\pi}\cdots\int_0^{2\pi}
    \prod_{j<k}|r_j\mathrm{e}^{\mathrm{i}\theta_j}-r_k\mathrm{e}^{\mathrm{i}\theta_k}|^2
    \mathrm{d}\theta_1\cdots\mathrm{d}\theta_n
    =(2\pi)^n\sum_{\sigma\in\Sigma_n}\prod_{j=1}^nr_j^{2(\sigma(j)-1)}
    =(2\pi)^n\mathrm{per}\Bigr[r_j^{2k}\Bigr]_{1\leq j,k\leq n}
  \]
  where ``per'' stands for ``permanent''. Therefore, the (exchangeable) density
  of the moduli is given by
  \[
    \int_0^{2\pi}\cdots\int_0^{2\pi}
    \frac{\mathrm{e}^{-\sum_{j=1}^nr_j^2}}{\pi^n\prod_{k=1}^nk!}
    \prod_{j<k}|r_j\mathrm{e}^{\mathrm{i}\theta_j}-r_k\mathrm{e}^{\mathrm{i}\theta_k}|^2
    \mathrm{d}\theta_1\cdots\mathrm{d}\theta_n
    =
    \mathrm{perm}\Bigr[\frac{2}{k!}r_j^{2k}\mathrm{e}^{-r_j^2}\Bigr]_{1\leq
      j,k\leq n}.
  \]
  But if $f_1,\ldots,f_n:\mathbb{R}\to\mathbb{R}$ are probability density
  functions then
  $(x_1,\ldots,x_n)\mapsto\mathrm{perm}[f_j(x_k)]_{1\leq j,k\leq n}$ is the
  density of the random vector $(X_{\sigma(1)},\ldots,X_{\sigma(n)})$ where
  $X_1,\ldots,X_n$ are independent real random variables with densities
  $f_1,\ldots,f_n$ and where $\sigma$ is a uniform random permutation of
  $\{1,\ldots,n\}$ independent of $X_1,\ldots, X_n$. Also the desired result
  follows from the formula above and the fact that for all $1\leq k\leq n$, a
  non-negative random variable $Z_k$ has density
  $r\mapsto\frac{2}{k!}r^{2k}\mathrm{e}^{-r^2}\mathbf{1}_{r\geq0}$ if and only
  if $Z_k^2\sim\mathrm{Gamma}(k,1)$.
  
  This proof, essentially due to Kostlan \cite{MR1148410}, see also
  \cite{MR1986426}, relies on the determinantal nature of $\varphi_n$ in
  \eqref{eq:ginibre}, and remains usable for general determinantal processes,
  see for instance \cite{MR2552864}.
\end{proof}


\begin{theorem}[Spectral radius]
  \label{th:ginibre-spectral-radius}
  With the notation of Theorem \ref{th:ginibre}, almost surely 
  \[
    \rho_n=\max_{1\leq k\leq n}\frac{|\boldsymbol{\lambda}_k|}{\sqrt{n}}
    \underset{n\to\infty}{\longrightarrow}1.
  \]
  Moreover, denoting $\kappa_n=\log\frac{n}{2\pi}-2\log(\log(n))$, 
  \[
    \sqrt{4n\kappa_n} \Bigr(\rho_n-1-\sqrt{\frac{\kappa_n}{4n}}\Bigr)
    \underset{n\to\infty}{\overset{\mathrm{d}}{\longrightarrow}}
    \mathrm{Gumbel}\footnote{If $X\sim\mathrm{Exp}(1)$ then
      $-\log(X)$ has Gumbel law and cumulative distribution function
      $\mathbb{P}(-\log(X)\leq x)=\mathrm{e}^{-\mathrm{e}^{-x}}$.}.
  \]
\end{theorem}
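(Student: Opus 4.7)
My plan is to reduce the problem to a maximum of \emph{independent} random variables via Theorem \ref{th:kostlan}, which yields the distributional identity
\[
  \rho_n\overset{\mathrm{d}}{=}\frac{1}{\sqrt{n}}\max_{1\leq k\leq n}Z_k,
\]
where $Z_1,\ldots,Z_n$ are independent with $Z_k^2\sim\mathrm{Gamma}(k,1)$; the uniform random permutation appearing in Theorem \ref{th:kostlan} is immaterial once we take the maximum. Independence then factorizes the cumulative distribution function of $\rho_n$, and combining this with the Gamma\,--\,Poisson duality $\mathbb{P}(\mathrm{Gamma}(k,1)>t)=\mathbb{P}(\mathrm{Poisson}(t)\leq k-1)$ gives
\[
  \mathbb{P}(\rho_n\leq r)=\prod_{k=1}^n\bigl(1-q_{n,k}(r)\bigr),
  \quad\text{with}\quad
  q_{n,k}(r)=\mathbb{P}\bigl(\mathrm{Poisson}(r^2n)\leq k-1\bigr).
\]

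For the almost sure statement $\rho_n\to1$, I would apply the Borel\,--\,Cantelli lemma to both tails. The lower tail is controlled by retaining only the largest-index particle: $\mathbb{P}(\rho_n<1-\varepsilon)\leq\mathbb{P}(Z_n^2<(1-\varepsilon)^2n)$, which is exponentially small in $n$ since $Z_n^2$ is a sum of $n$ independent $\mathrm{Exp}(1)$ variables (Chernoff or Cramér's theorem). For the upper tail, a union bound together with the Gamma\,--\,Poisson duality gives
\[
  \mathbb{P}(\rho_n>1+\varepsilon)\leq n\,\mathbb{P}\bigl(\mathrm{Poisson}((1+\varepsilon)^2n)\leq n-1\bigr),
\]
which is also exponentially small by Cramér. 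Summability and Borel\,--\,Cantelli produce $\rho_n\to1$ almost surely, regardless of how the Ginibre matrices are realized on a common probability space.

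For the Gumbel statement, set $r_n=1+\sqrt{\kappa_n/(4n)}+x(4n\kappa_n)^{-1/2}$ and $t_n=r_n^2n$, which satisfies the expansion
\[
  t_n-n=\sqrt{n\kappa_n}+x\sqrt{n/\kappa_n}+O(\kappa_n).
\]
The target is $\log\mathbb{P}(\rho_n\leq r_n)\to-e^{-x}$. Writing $\log(1-q)=-q+O(q^2)$, it suffices to establish the two asymptotics $\sum_{k=1}^nq_{n,k}(r_n)\to e^{-x}$ and $\sum_{k=1}^nq_{n,k}(r_n)^2\to0$. The key input is a sharp Poisson lower-tail estimate obtained from Stirling, or equivalently from Lemma \ref{le:exp}: parametrizing $k=n-j$, one gets, uniformly on the relevant window $0\leq j=O(\sqrt{n/\kappa_n}\,\log n)$,
\[
  q_{n,n-j}\sim\frac{1}{\sqrt{2\pi\kappa_n}}\exp\!\Bigl(-\tfrac{\kappa_n}{2}-x-j\sqrt{\kappa_n/n}\,\Bigr).
\]
The role of the $-2\log\log n$ correction in $\kappa_n$ now becomes transparent: $e^{-\kappa_n/2}=\sqrt{2\pi/n}\,\log n$, and therefore
\[
  \sum_{j\geq 0}q_{n,n-j}\sim\frac{e^{-x}}{\sqrt{2\pi\kappa_n}}\cdot\sqrt{\frac{2\pi}{n}}\,\log n\cdot\sqrt{\frac{n}{\kappa_n}}=\frac{e^{-x}\log n}{\kappa_n}\underset{n\to\infty}{\longrightarrow}e^{-x}.
\]

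The main obstacle is therefore twofold: establishing the above asymptotic for $q_{n,n-j}$ uniformly on the effective window of indices $k$ close to $n$ (a local central limit theorem for the Poisson lower tail at the moderate deviation scale $\sqrt{n\kappa_n}$, which can be obtained from precise Stirling asymptotics or from Lemma \ref{le:exp}), and showing that contributions from outside this window are negligible, which follows from the superexponential decay of Poisson lower tails (e.g.\ via Bennett's inequality). The maximal bound $q_{n,n}(r_n)=O(1/\kappa_n)=o(1)$ then gives $\sum_kq_{n,k}(r_n)^2\to0$, justifying the expansion $\log(1-q)=-q+O(q^2)$ and completing the proof. This scheme goes back to Rider's original argument and is also the one used in the references cited immediately above the theorem statement.
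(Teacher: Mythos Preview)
Your proposal is correct and follows essentially the same approach as the paper's sketch: reduce via Theorem~\ref{th:kostlan} to a maximum of independent variables, factorize the cumulative distribution function, use Borel\,--\,Cantelli with large-deviation bounds for the almost sure convergence, and obtain the Gumbel limit from quantitative tail asymptotics for the product. The paper phrases the heuristic via the central limit theorem for $Z_k^2=E_1+\cdots+E_k$ (approximating each factor by a Gaussian tail) and defers the details to Rider, whereas you make the same computation explicit through the Gamma\,--\,Poisson duality and direct Poisson lower-tail asymptotics; these are equivalent formulations of the same argument.
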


Note the the second statement (Gumbel fluctuation) implies that
$\rho_n\underset{n\to\infty}{\longrightarrow}1$ in probability.

Regarding the convergence, see \cite{bordenave-chafai-garcia-zelada} for a
random analytic function point of view, related to the central limit theorem.
Regarding the fluctuation, see \cite{MR2288065,MR2594353} for an interpolation
with the Tracy\,--\,Widom fluctuation at the edge of GUE.

\begin{proof}[Idea of proof] By Theorem \ref{th:kostlan} Since
  $Z_k^2\overset{\mathrm{d}}{=}E_1+\cdots+E_k$ where $E_1,\ldots,E_k$ are
  independent and identically distributed exponential random variables of unit
  mean, we get, for every $r>0$,
  \[  
    \mathbb{P}(\rho_n\leq \sqrt{n}r) %
    =\prod_{1\leq k\leq n}\mathbb{P}\left(\frac{E_1+\cdots+E_k}{n}\leq r^2\right).
  \]  
  By the law of large numbers, this tends as $n\to\infty$ to $0$ or $1$
  depending on the position of $r$ with respect to $1$. Moreover the central
  limit theorem suggests that $\rho_n$ behaves as $n\to\infty$ as the maximum
  of independent and identically distributed Gaussian random variables, a
  situation for which it is known that the fluctuation follows the Gumbel law.
  The full proof is in \cite{MR1986426}
  and involves crucially a quantitative central limit theorem and the
  Borel\,--\,Cantelli lemma. The approach is robust and remains valid beyond the
  Ginibre gas, for determinantal gases, see for instance
  \cite{MR3215627,MR3615091,david-edge} and references therein.
\end{proof}

\begin{remark}[Real or quatertionic Ginibre model]
  How about an analogue of Theorem \ref{th:ginibre} when the entries of
  $\mathbf{M}$ are real Gaussian or real quaternionic Gaussian instead of
  complex Gaussian? Some answers are already in \cite{MR0173726}. In these
  cases, the density of the eigenvalues can be computed but it is not the beta
  gas \eqref{eq:beta} with $\beta\in\{1,4\}$. This is in contrast with the
  G(O|U|S)E triplet of the Hermtian random matrix Dysonian universe
  \cite{MR0177643}. See for instance \cite{MR1437734,dubach-quaternions} and
  references therein.
\end{remark}

\begin{remark}[Large deviations] The large deviations principle for the beta
  Ginibre gas \eqref{eq:ginibre} was established in
  \cite{MR1746976,MR1606719}, using a method inspired from \cite{BG08}, itself
  inspired from \cite{MR1228526,MR1296352}. It does not rely on the
  determinantal structure, and allows to extend Theorem
  \ref{th:ginibre-circular-strong} to all $\beta>0$.
\end{remark}

\subsection{More determinantal models}\label{se:more}

It is well known that the ratio of two independent real standard Gaussian
random variables follows a Cauchy distribution. The following theorem can be
seen as a matrix version of this phenomenon.

\begin{theorem}[Forrester\,--\,Krishnapur spherical ensemble]\label{th:spherical}
  Let $\mathbf{M}_1$ and $\mathbf{M}_2$ be independent copies of the Ginibre
  random matrix defined in \eqref{eq:ginibre-matrix}. Then as an exchangeable
  random vector of $\mathbb{C}^n$, the eigenvalues of
  $\mathbf{M}_1{\mathbf{M}_2}^{-1}$ have density
  \[
    (z_1,\ldots,z_n)\in\mathbb{C}^n
    \mapsto\frac{1}{Z_n}\frac{\prod_{j<k}|z_j-z_k|^2}{\prod_{j=1}^n(1+|z_j|^2)^{n+1}}.
  \]
  This corresponds to the beta gas \eqref{eq:beta} with
  $V=\frac{1}{2}\frac{n+1}{n}\log(1+\left|\cdot\right|^2)$ and $\beta=2$.
  Moreover its push-forward on the Riemann sphere using inverse stereographic
  projection is the uniform law on the sphere. 
\end{theorem}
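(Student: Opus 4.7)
The plan is to compute the joint density of the eigenvalues of $M_1 M_2^{-1}$ via the generalized Schur (QZ) decomposition applied to the pair $(M_1,M_2)$, and then to read off the beta-gas form and the spherical invariance directly from this density. For a generic pair of $n\times n$ complex matrices there exist unitaries $Q,Z\in U(n)$, essentially unique modulo a diagonal phase torus, such that $Q^*M_1Z=T_1$ and $Q^*M_2Z=T_2$ are both upper triangular; the ratios $\lambda_i=t_{1,ii}/t_{2,ii}$ are the generalized eigenvalues of the pencil, hence the eigenvalues of $M_1 M_2^{-1}$. This is the natural two-matrix analogue of the Schur decomposition used in Theorem \ref{th:ginibre}, and it is cleaner than first marginalizing $M_2$ out of the joint law of $(M_1M_2^{-1},M_2)$.

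I would start from the joint density $p(M_1,M_2)=\pi^{-2n^2}\exp(-\mathrm{Tr}(M_1M_1^*+M_2M_2^*))$, which is bi-unitarily invariant. After the QZ change of variable the exponent becomes $-\sum_{i\leq j}(|t_{1,ij}|^2+|t_{2,ij}|^2)$, while the Jacobian produces the pencil-Vandermonde factor $\prod_{i<j}|t_{1,ii}t_{2,jj}-t_{1,jj}t_{2,ii}|^2$, the two-matrix analogue of $|\Delta(\lambda)|^2$. Integrating out $Q$, $Z$, and the strictly upper triangular entries of $T_1$ and $T_2$ (independent complex Gaussians) contributes only a constant. Reparametrizing the diagonals by $t_{2,ii}=d_i$, $t_{1,ii}=\lambda_i d_i$ with complex Jacobian $|d_i|^2$ per $i$, the pencil-Vandermonde factorizes as $\prod_{i<j}|d_id_j(\lambda_i-\lambda_j)|^2$, and the joint density on $(\lambda,d)$ becomes proportional to $\prod_{i<j}|\lambda_i-\lambda_j|^2\prod_i|d_i|^{2n}\exp(-|d_i|^2(1+|\lambda_i|^2))$. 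A polar integration of each $d_i$ in $\mathbb{C}$ gives $\int_{\mathbb{C}}|z|^{2n}e^{-|z|^2(1+|\lambda_i|^2)}\mathrm{d}z=\pi n!/(1+|\lambda_i|^2)^{n+1}$, which yields the claimed eigenvalue density.

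The beta-gas form is then immediate: writing $\prod_i(1+|\lambda_i|^2)^{-(n+1)}=\exp(-n\beta\sum_iV(\lambda_i))$ with $\beta=2$ forces $V=\tfrac{1}{2}\tfrac{n+1}{n}\log(1+|\cdot|^2)$. For the sphere uniformity, the inverse stereographic projection $\Pi\colon\mathbb{C}\to S^2$ satisfies $\mathrm{d}\sigma_{S^2}=4(1+|z|^2)^{-2}\mathrm{d}z$ and the chordal identity $|\Pi(z)-\Pi(w)|^2=4|z-w|^2/((1+|z|^2)(1+|w|^2))$. Pushing the joint eigenvalue density forward through $\Pi^{\otimes n}$ yields a law on $(S^2)^n$ proportional to $\prod_{i<j}|\Pi(\lambda_i)-\Pi(\lambda_j)|^2$ with respect to $\mathrm{d}\sigma_{S^2}^{\otimes n}$, which is manifestly invariant under the diagonal action of $SO(3)$; by rotation invariance and exchangeability, the one-point marginal is the uniform probability measure on $S^2$.

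The main obstacle is the careful derivation of the QZ Jacobian, including the handling of the $T^n\times T^n$ phase ambiguity in $(Q,Z,T_1,T_2)$ and of the codimension-one set on which the decomposition is non-generic. This parallels the analogous derivation for the ordinary Schur decomposition used in the proof of Theorem \ref{th:ginibre} but is genuinely heavier; the canonical reference is Forrester and Krishnapur's original article introducing the spherical ensemble, and the argument can alternatively be reformulated along the lines of Ginibre's original approach \cite{MR0173726}.
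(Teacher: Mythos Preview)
The paper does not provide its own proof of this theorem; it simply refers the reader to \cite{MR2489167}, \cite{MR2552864}, \cite{forrester-krishnapur}, and \cite{MR2641363}. Your outline via the generalized Schur (QZ) decomposition is precisely the approach taken in those references (notably Krishnapur's original derivation and Forrester's book), and the computation is correct in all essentials: the pencil Vandermonde $\prod_{i<j}|t_{1,ii}t_{2,jj}-t_{1,jj}t_{2,ii}|^2$, the power $|d_i|^{2n}$ after reparametrization, the $d_i$-integral giving $(1+|\lambda_i|^2)^{-(n+1)}$, and the stereographic identities yielding a density proportional to the chordal Vandermonde $\prod_{i<j}|\Pi(\lambda_i)-\Pi(\lambda_j)|^2$ with respect to $\mathrm{d}\sigma_{S^2}^{\otimes n}$, hence $SO(3)$-invariance and uniform one-point marginal.
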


See \cite{MR2489167} and \cite{MR2552864,forrester-krishnapur,MR2641363} for a
proof. The set of singular $n\times n$ complex matrices is a hyper-surface of
zero Lebesgue measure in $\mathbb{C}^{n^2}$ and therefore, almost surely, the
Ginibre random matrix $\mathbf{M}$ in \eqref{eq:ginibre-matrix} is invertible
(its law is absolutely continuous with respect to the Lebesgue measure on
$\mathbb{C}^{n^2}$).

From Theorem \ref{th:mustar}, the equilibrium measure of the gas is heavy
tailed with density
\[
  z\in\mathbb{C}\mapsto\frac{1}{\pi(1+|z|^2)^2}.
\]
A large deviations principle for the empirical measure associated to the
Coulomb gas is proved in \cite{MR2926763} in relation with the sphere. The
convergence of the empirical measure is also considered in \cite{ECP2011-10}.
The fluctuation at the edge is discussed in \cite{MR3215627} and studied in
\cite{MR3615091} by using the idea of Kostlan behind Theorem \ref{th:kostlan}
thanks to the determinantal structure. A beta version of the model is
considered in \cite{MR3831027} and studied using transportation of measure.

\begin{theorem}[\.{Z}yczkowski\,--\,Sommers ensemble]\label{th:unitary}
  Let $\mathbf{U}={(\mathbf{U}_{j,k})}_{1\leq j,k\leq m}$ be a random
  $m\times m$ unitary matrix following the (Haar) uniform law on this compact
  group of matrices. Then, for all $1\leq n<m$, as an exchangeable random
  vector of $\mathbb{C}^n$, the eigenvalues of the truncation
  ${(\mathbf{U}_{j,k})}_{1\leq j,k\leq n}$ have density
  \[
    (z_1,\ldots,z_n)\in\mathbb{C}^n
    \mapsto
    \frac{\prod_{j=1}^n(1-|z_j|^2)^{m-n-1}}{Z_n}\prod_{1\leq j<k\leq n}|z_j-z_k|^2.
  \]
  This corresponds to the beta gas \eqref{eq:beta} with
  $V=V_{n,m}=\frac{m-n-1}{n}\log\frac{1}{1-|z|^2}$ and $\beta=2$. 
\end{theorem}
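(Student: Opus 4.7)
The plan is to mirror the strategy used for the Ginibre ensemble in Theorem \ref{th:ginibre}. I would first determine the joint density of the matrix $\mathbf{T}$ itself, and then apply the Schur unitary decomposition as a change of variable to read off the eigenvalue density. The density of $\mathbf{T}$ follows from the classical fact that the first $n$ rows $W = [\mathbf{T}, \mathbf{B}]$ of $\mathbf{U}$, with $\mathbf{B}\in\mathbb{C}^{n\times(m-n)}$, are uniformly distributed on the complex Stiefel manifold $\{W: WW^* = I_n\}$. The unitarity constraint reads $\mathbf{B}\mathbf{B}^* = I_n - \mathbf{T}\mathbf{T}^*$, and writing $\mathbf{B} = (I_n - \mathbf{T}\mathbf{T}^*)^{1/2} Y$ with $Y$ uniform on the Stiefel manifold $V_n(\mathbb{C}^{m-n})$ yields, upon integrating out $Y$, the density $p(T) \propto \det(I_n - TT^*)^{m-2n}\mathbf{1}_{I_n - TT^* > 0}$ on $\mathbb{C}^{n\times n}$, valid when $m \geq 2n$. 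The range $n < m < 2n$, in which this marginal is not absolutely continuous with respect to Lebesgue measure, can be recovered after the fact by analytic continuation in $m$ of the final eigenvalue density.

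With the density of $\mathbf{T}$ in hand, the Schur decomposition $T = V(\Lambda + N)V^*$, with $V \in U(n)$, $\Lambda = \mathrm{diag}(z_1,\ldots,z_n)$, and $N$ strictly upper triangular, proceeds exactly as in the proof of Theorem \ref{th:ginibre}: the Jacobian produces the Vandermonde factor $\prod_{i<j}|z_i - z_j|^2$, integration over $V$ contributes a universal constant, and unitary invariance reduces the remaining integrand to $\det(I_n - (\Lambda+N)(\Lambda+N)^*)^{m-2n}$. It remains to integrate out the strictly upper triangular nilpotent $N$ over the region where $I_n - (\Lambda+N)(\Lambda+N)^* > 0$.

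This integration over $N$ is the main obstacle, and I would carry it out by induction on $n$ via a Schur complement identity. Writing
\[
\Lambda + N = \begin{pmatrix} z_1 & v^T \\ 0 & T' \end{pmatrix}
\]
with $T' = \Lambda' + N'$ the $(n-1)\times(n-1)$ Schur form carrying the eigenvalues $z_2,\ldots,z_n$ and $v = (N_{1,2},\ldots,N_{1,n})^T$, a block determinant computation together with the algebraic identity $T'^*(I - T'T'^*)^{-1}T' = (I - T'^*T')^{-1} - I$ yields
\[
\det(I_n - TT^*) = \det(I_{n-1} - T'T'^*)\bigl[(1 - |z_1|^2) - v^T(I - T'^*T')^{-1}\overline{v}\bigr].
\]
The linear substitution $\overline{v} = (1-|z_1|^2)^{1/2}(I - T'^*T')^{1/2} w$ reduces the bracketed quantity to $(1-|z_1|^2)(1 - \|w\|^2)$ with Jacobian $(1-|z_1|^2)^{n-1}\det(I - T'^*T')$. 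Integrating $w$ over the unit ball of $\mathbb{C}^{n-1}$ produces a universal Beta-type constant together with a factor $(1-|z_1|^2)^{m-n-1}$, and raises the exponent of $\det(I - T'T'^*)$ from $m-2n$ to $m-2n+1$. Since the pair $(m,n)\mapsto(m-1,n-1)$ leaves both the exponent $m - n - 1$ and the form of the inductive hypothesis invariant, iterating over the remaining rows of $N$ produces the full product $\prod_{j=1}^n (1-|z_j|^2)^{m-n-1}$, completing the proof.
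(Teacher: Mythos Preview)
The paper does not actually prove this theorem: immediately after the statement it only writes ``See \cite{MR1748745} for a proof, and \cite{forrester-krishnapur} for the special case $m\geq 2n$ and a link with the pseudo-sphere and Schur transformation.'' So there is no in-text argument to compare against.

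Your sketch is the standard derivation and is correct. The density $\det(I_n-TT^*)^{m-2n}$ for the truncation when $m\geq 2n$ is classical, the Schur change of variables produces the Vandermonde factor exactly as in Theorem~\ref{th:ginibre}, and your inductive integration over the strictly upper triangular part is sound: the Schur complement identity you quote is equivalent to $(I-T'^*T')T'^*=T'^*(I-T'T'^*)$, the substitution has real Jacobian $(1-|z_1|^2)^{n-1}\det(I-T'^*T')=(1-|z_1|^2)^{n-1}\det(I-T'T'^*)$, and the passage $(m,n)\mapsto(m-1,n-1)$ indeed preserves both the exponent $m-n-1$ and the shape of the remaining integral, so the induction closes.

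The only soft spot is the phrase ``analytic continuation in $m$'' for the range $n<m<2n$, since $m$ is an integer. What makes this rigorous is that, for each fixed $n$, the $k$-point correlation functions of the eigenvalues of the truncation are polynomial in $m$ (they are finite sums of moments of entries of a Haar unitary, hence Weingarten sums), and you have shown they agree with those of the claimed density for all integers $m\geq 2n$; two polynomials agreeing on infinitely many integers coincide. Alternatively one can bypass the matrix density entirely and work directly on the level of orthogonal polynomials on the disc with weight $(1-|z|^2)^{m-n-1}$, which is the route implicit in \cite{forrester-krishnapur}. Either way your argument is complete once this point is made precise.
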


See \cite{MR1748745} for a proof, and \cite{forrester-krishnapur} for the
special case $m\geq 2n$ and a link with the pseudo-sphere and Schur
transformation. Following \cite{MR2198697} and references therein, if
$\lim_{m,n\to\infty}\frac{n}{m}=\alpha\in(0,1)$ then the empirical measure
converges towards the heavy tailed probability measure with density
\[
  z\in\mathbb{C}\mapsto\frac{(1-\alpha)}{\pi\alpha(1-|z|^2)^2}
  \mathbf{1}_{|z|\leq\sqrt{\alpha}}.
\]
In a sense this law interpolates between the uniform law on the unit disc
($\alpha\to0$ after scaling by $\sqrt{\alpha}$) and the uniform law on the
unit circle ($\alpha\to1$). A large deviations principle is obtained in
\cite{MR2198697}, concentration inequalities are derived in
\cite{zbMATH07107321}, while the fluctuation at the edge is studied in
\cite{MR3615091}.

\begin{theorem}[Product of Ginibre random matrices]\label{th:product}
  Let $m\geq1$ and let $\mathbf{M}_1,\ldots,\mathbf{M}_m$ be independent and
  identically distributed copies of the $n\times n$ in
  \eqref{eq:ginibre-matrix}. Then, as an exchangeable random vector of
  $\mathbb{C}^n$, the eigenvalues of the scaled product
  $n^{-\frac{m}{2}}\mathbf{M}_1\cdots\mathbf{M}_m$ have density
  \[
    \frac{\prod_{j=1}^nw_m(\sqrt{n}|z_j|)}{Z_n}\prod_{j<k}|z_j-z_k|^2
  \]
  where $w_k$ is the Meijer G-function given by the recursive formula
  \[
    w_1(z)=\mathrm{e}^{-|z|^2}
    \quad\text{and}\quad
    w_k(z)=2\pi\int_0^\infty w_{k-1}
    \Bigr(\frac{z}{r}\Bigr)\frac{\mathrm{e}^{-r^2}}{r}\mathrm{d}r.
  \]
  This corresponds to the beta gas \eqref{eq:beta} with
  $V=V_{n,m}=-\frac{1}{n}\log w_m(\sqrt{n}\bullet)$ and $\beta=2$.
\end{theorem}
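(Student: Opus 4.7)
The plan is to extend the Schur-decomposition argument used in the proof of Theorem~\ref{th:ginibre} to the $m$ matrices simultaneously. The key tool is the generalized (periodic) Schur decomposition: almost surely one produces unitary matrices $U_0,U_1,\ldots,U_{m-1}$ (setting $U_m:=U_0$) such that
\[
  U_{k-1}^{*}\mathbf{M}_k U_k \;=\; T_k \;=\; D_k+N_k,\qquad k=1,\ldots,m,
\]
with $D_k$ diagonal and $N_k$ strictly upper triangular. Cyclic multiplication gives $\mathbf{M}_1\cdots\mathbf{M}_m=U_0(T_1\cdots T_m)U_0^{*}$, so the eigenvalues of the product are $\boldsymbol{\lambda}_j=\prod_{k=1}^{m}d_k^{(j)}$ where $d_k^{(j)}:=(D_k)_{jj}$.

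I then compute the Jacobian of the change of variables $(\mathbf{M}_k)_{k=1}^m\leftrightarrow(D_k,N_k,U_k)_k$, generalizing the single-matrix formula $\mathrm{d}\mathbf{M}=|V(\lambda)|^2\,\mathrm{d}D\,\mathrm{d}N\,\mathrm{d}U$ recalled in the proof of Theorem~\ref{th:ginibre}: a differential computation at a generic periodic-Schur point shows that the Jacobian factors as a positive constant times the Vandermonde-squared $\prod_{1\le i<j\le n}|\boldsymbol{\lambda}_i-\boldsymbol{\lambda}_j|^2$ built from the eigenvalues of the product. The unitary invariance
\[
  \sum_{k=1}^m\mathrm{Tr}(\mathbf{M}_k\mathbf{M}_k^{*}) \;=\; \sum_{k=1}^m\mathrm{Tr}(D_kD_k^{*})+\sum_{k=1}^m\mathrm{Tr}(N_kN_k^{*})
\]
then integrates out the Haar variables (a constant) and the Gaussian pieces $N_k$ (another constant), all absorbed into $Z_n$.

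What remains is a density on the diagonals of the form
\[
  \frac{1}{Z_n}\prod_{j=1}^n\exp\!\Bigl(-\sum_{k=1}^m|d_k^{(j)}|^2\Bigr)\prod_{1\le i<j\le n}|\boldsymbol{\lambda}_i-\boldsymbol{\lambda}_j|^2.
\]
For each $j$ I change variables from $(d_1^{(j)},\ldots,d_m^{(j)})$ to $(\boldsymbol{\lambda}_j,d_2^{(j)},\ldots,d_m^{(j)})$ via $d_1^{(j)}=\boldsymbol{\lambda}_j/(d_2^{(j)}\cdots d_m^{(j)})$, with Jacobian $|d_2^{(j)}\cdots d_m^{(j)}|^{-2}$, and integrate out the $m-1$ auxiliaries. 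This yields the weight
\[
  W_m(\lambda) \;=\; \int_{\mathbb{C}^{m-1}}\exp\!\Bigl(-\frac{|\lambda|^2}{|d_2\cdots d_m|^2}-\sum_{k=2}^m|d_k|^2\Bigr)\prod_{k=2}^m\frac{\mathrm{d}^2 d_k}{|d_k|^2}.
\]
Writing $d_m=r\mathrm{e}^{\mathrm{i}\phi}$ in polar coordinates and using the rotational invariance of $W_{m-1}$ (by induction), the angular integral gives $2\pi$ and the remaining radial integral produces the recursion
\[
  W_m(\lambda)\;=\;2\pi\int_0^{\infty}W_{m-1}(\lambda/r)\,\mathrm{e}^{-r^2}\,\frac{\mathrm{d}r}{r},\qquad W_1(\lambda)=\mathrm{e}^{-|\lambda|^2},
\]
so $W_m=w_m$. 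A final global rescaling of the eigenvalues by $n^{-m/2}$, with all $n$-powers absorbed into $Z_n$, produces the stated formula.

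The main obstacle is the Jacobian of the generalized Schur decomposition: one must linearize the map $(\mathbf{M}_k)_k\mapsto(D_k,N_k,U_k)_k$ around a generic point of the simple-spectrum stratum, properly handle the diagonal-phase ambiguities of the $U_k$'s together with the cyclic constraint $U_m=U_0$, and check that the only eigenvalue-dependent contribution is the Vandermonde-squared factor in the products $\boldsymbol{\lambda}_j=\prod_k d_k^{(j)}$. Once this is in place, the remainder is a clean Gaussian integration followed by an induction on $m$ establishing the recursion; the identification of $w_m$ with the Meijer $G$-function $G^{m,0}_{0,m}$ then follows from the standard Mellin--Barnes representation of the iterated integral.
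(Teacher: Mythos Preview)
The paper does not supply its own proof of this theorem: immediately after the statement it writes ``See \cite{MR2993423} for a proof'' and moves on. So there is no in-paper argument to compare against beyond that pointer.

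That said, your outline is precisely the method of the cited reference (Akemann--Burda). The periodic Schur decomposition $U_{k-1}^{*}\mathbf{M}_kU_k=D_k+N_k$ with $U_m=U_0$, the Gaussian decoupling $\sum_k\mathrm{Tr}(\mathbf{M}_k\mathbf{M}_k^{*})=\sum_k\mathrm{Tr}(D_kD_k^{*})+\sum_k\mathrm{Tr}(N_kN_k^{*})$, the integration of the Haar and nilpotent variables into constants, and the recursive elimination of $d_2^{(j)},\ldots,d_m^{(j)}$ leading to $w_m(\lambda)=2\pi\int_0^\infty w_{m-1}(\lambda/r)\,\mathrm{e}^{-r^2}r^{-1}\,\mathrm{d}r$ are exactly the steps carried out there. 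Your proposal is therefore correct in substance and coincides with the intended approach.

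The one piece you flag as an ``obstacle'' is also the one genuine piece of work: the Jacobian of the periodic Schur map, together with the careful handling of the $m$-fold diagonal-phase gauge freedom in $(U_0,\ldots,U_{m-1})$ so that the parameter count matches and the only eigenvalue-dependent factor left is $\prod_{i<j}|\boldsymbol{\lambda}_i-\boldsymbol{\lambda}_j|^2$ in the \emph{product} variables $\boldsymbol{\lambda}_j=\prod_k d_k^{(j)}$. You assert this rather than compute it; that is where the actual content lies, and it is done in the reference rather than in the present paper.
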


See \cite{MR2993423} for a proof. Following
\cite{gotze-tikhomirov,ECP2011-10}, its converges to the equilibrium measure
with density
\[
  z\in\mathbb{C}\mapsto\frac{|z|^{\frac{2}{m}-2}}{m\pi}\mathbf{1}_{|z|\leq1}.
\]
We recover the uniform law on the unit disc when $m=1$. The edge fluctuation
is considered in \cite{MR3615091}.

\begin{remark}[Determinantal gases and random normal matrices]
  An $n\times n$ complex matrix is \emph{normal} when $MM^*=M^*M$. The random
  matrices in theorems
  \ref{th:ginibre},\ref{th:spherical},\ref{th:unitary},\ref{th:product} are
  not normal. Let us comment now on models of normal random matrices. Let
  $\mathcal{N}_n$ be the hyper-surface of $\mathbb{C}^{n^2}$ of all
  $n\times n$ normal matrices. Let $V:\mathbb{C}\to\mathbb{R}$ be
  $\mathcal{C}^2$ and such that $V(z)\geq c\log(1+|z|^2)$ for some constant
  $c>0$. Following \cite{MR1643533,MR2172690}, let us consider the probability
  measure on $\mathcal{N}_n$ with density proportional to
  $M\mapsto\mathrm{e}^{-n\mathrm{Trace}(V(M))}$ with respect to the Hausdorff
  measure on $\mathcal{N}_n$. This produces random normal matrices, and their
  eigenvalues, seen as an exchangeable random vector, have density given by
  the gas \eqref{eq:beta} with $\beta=2$. This random (normal) matrix model is
  referred to as the \emph{random normal matrix model}. The fluctuation of the
  empirical measure is studied in \cite{MR3342661,MR2817648}, while the
  fluctuation at the edge is studied in \cite{MR3215627,MR3615091,david-edge}.
\end{remark}

The power of a Ginibre matrix has also a nice determinantal structure, see \cite{MR3878136}.

\section{Comments and open problems}

We have skipped several important old and new results on Coulomb gases. The
main themes are local versus global, first versus second order, macroscopics
versus microscopics, non-universal versus universal.

\textbf{Universality.} The first order global convergence
$\lim_{n\to\infty}\mu_n=\mu_V$, that we call macroscopics, is not universal in
the sense that the limit $\mu_V$ still depends on $V$. The second order
convergence provided by the central limit theorem \eqref{eq:CLT} is universal
in the sense that the limit should not depend on $V$. Similarly, for a
two-dimensional Coulomb gas with radial confining potential $V$, the limit of
the edge depends on $V$ but its fluctuation does not and is universal.
Universality emerges often in a second order asymptotic analysis, as for
classical limit theorems of probability theory.

\textbf{Microscopics.} A second order analysis corresponds to the asymptotic
analysis of $n(\mu_n-\nu_V)$ as $n\to\infty$. This can be seen as a
microscopic analysis while the convergence $\mu_n\to\mu_V$ is a macroscopic
analysis. This corresponds to a second order Taylor formula for the quadratic
form $\mathcal{E}_V$, in other words in a special factorization, leading to a
new object called the \emph{renormalized energy}. This was the subject of an
series of works by Étienne Sandier and Sylvia Serfaty, and by Sylvia Serfaty
and other co-authors. See for instance
\cite{MR3888701,MR3837028,MR3735628,MR3309890} and references therein. An
outcome of this refined analysis is a second order asymptotics for the free
energy. More precisely, recall that the Boltzmann\,--\,Shannon entropy of the
Boltzmann\,--\,Gibbs measure $P_n$ in \eqref{eq:P} is defined by
\[
  \mathcal{S}(P_n)
  =-\int_{(\mathrm{R}^d)^n}f_n(x_1,\cdots,x_n)\log f_n(x_1,\ldots,x_n)
  \mathrm{d}x_1\cdots\mathrm{d}x_n
\]
where $f_n$ is the density of $P_n$. Its Helmholtz free energy is given by
\[
  \int E_n\mathrm{d}P_n
  -\frac{\mathcal{S}(P_n)}{\beta}
  =-\frac{\log Z_n}{\beta},
\]
see \cite{MR3434251}. Now following \cite{MR3735628}, if $\mu_V$ has density
$f_V$ with a finite Boltzmann\,--\,Shannon entropy
\[
  \mathcal{S}(\mu_V)=-\int_{\mathbb{R}^d} f_V\log f_V\mathrm{d}x,
\]
then we have an asymptotic expansion of the free energy as $n\to\infty$ as
\begin{equation*}\label{eq:Zexp}
  -\frac{\log Z_n}{\beta}
  =
  \begin{cases}
    \displaystyle
    \frac{n^2}{2}\mathcal{E}_V(\mu_V)
    -\frac{n\log n}{4}
    +n(c_\beta+c_\beta'\mathcal{S}(\mu_V))
    +no_n(1)
    &
    \text{if $d=2$}\\[1em]
    \displaystyle
    \frac{n^2}{2}\mathcal{E}_V(\mu_V)
    +n(c_{\beta,d,V}+c'_{\beta}\mathcal{S}(\mu_V))
    +no_n(1)
    &\text{if $d\neq2$}
  \end{cases},
\end{equation*}
where $c_\beta,c_\beta',c_{\beta,d,V}$ are constants which can be made
explicit.

\textbf{Edge.} The most elementary open question related to Coulomb gases is
perhaps the law of fluctuation at the edge, even in the case of rotationally
invariant confining potential for arbitrary values of $d$ and $\beta$. The
Gumbel fluctuation is known for instance to be universal for a class of two
dimensional ($d=2$) determinantal ($\beta=2$) Coulomb gases with radial
confining potential, see \cite{MR3215627}. The same question for arbitrary
$\beta$ is open, and the same question for arbitrary dimension $d\geq3$ and
$\beta>0$ is also open.

\textbf{Crystallization.} A conjecture related to Coulomb gases is the
emergence of rigid structures at low temperatures. This is known as
crystallization and was proved in special cases, for instance for
one-dimensional Coulomb gases. See for instance
\cite{MR3309890,MR3429164,MR3888701,MR3837028,zbMATH07204762} and references
therein.

\textbf{More.} Among all the important results on Coulomb gases that we have
not yet mentioned, we may cite the approximate transport maps for universality
considered in \cite{MR3646880,MR3351052}, the rigidity analysis for
hierarchical Coulomb gases considered in \cite{MR4026615}, the local density
for two-dimensional Coulomb gases considered in \cite{MR3694026}, the
Dobrushin\,--\,Lanford\,--\,Ruelle equations considered in \cite{MR4178183},
the Coulomb gas properties on the sphere considered in \cite{MR3902480}, the
local laws and rigidity considered in \cite{zbMATH07310787}, the
quasi-Monte-Carlo method on the sphere considered in \cite{berman-spherical},
and the Berezinskii\,--\,Kosterlitz\,--\,Thouless transition
\cite{kharash-peled,garban-sepulveda}.

\section*{Acknowledgments}

We would like to thank David García-Zelada and Kilian Rashel for their helpful
remarks.

\bibliographystyle{smfalpha}
\bibliography{erms-2018-chafai}

\end{document}